\theoremstyle{plain}
\newtheorem{thm}{Theorem}[section]
\newtheorem{lem}[thm]{Lemma}
\newtheorem{cor}[thm]{Corollary}
\theoremstyle{definition}
\newtheorem{de}[thm]{Definition}
\newtheorem{rem}[thm]{Remark}
\numberwithin{equation}{section}
\newtheorem{claim}{Claim}
\def \N {\mathbb N}
\def \Z {\mathbb Z}
\def \R {\mathbb R}
\def \F {\mathcal F}
\def \SS {\mathcal{S}}
\def \a {\alpha }
\def \b {\beta}
\def \ep {\epsilon}
\def \D {\Delta}
\def \lel {\left\lceil}
\def \rr {\right\rceil}
\begin{document}

\title[Topologically mildly mixing along generalized polynomials]{Topologically mildly mixing of higher orders along generalized polynomials}

\author[Y.~Cao]{Yang Cao}
\address[Yang Cao]{Department of Mathematics, Nanjing University, Nanjing 210093, P.R. China}
\email{cy412@nju.edu.cn}

\author[J.~Zhao]{Jianjie Zhao$^*$}\let\thefootnote\relax\footnote{* Corresponding author.}
\address[Jianjie Zhao]{School of Mathematics, Hangzhou Normal University, Hangzhou 311121, P.R. China}
\email{zjianjie@hznu.edu.cn}

\subjclass[2020]{Primary: 37B20, Secondary: 37B05, 11B83, 37A25}

\keywords{generalized polynomials; mildly mixing; recurrence}



\begin{abstract}
	This paper is devoted to studying the multiple recurrent property of topologically mildly mixing systems along generalized polynomials. We show that if a minimal system is topologically mildly mixing, then it is mild mixing of higher orders along generalized polynomials. Precisely, suppose that $(X, T)$ is a topologically mildly mixing minimal system, $d\in \mathbb{N}$, $p_1, \dots, p_d$ are integer-valued generalized polynomials with $(p_1, \dots, p_d)$  non-degenerate. Then for all non-empty open subsets $U , V_1, \dots, V_d $ of $X$, $$\{n\in \Z: U\cap T^{-p_1(n) }V_1 \cap \dots \cap T^{-p_d(n) }V_d \neq \emptyset \}$$ is an IP$^*$-set.

\end{abstract}

\maketitle





\section{Introduction}

Throughout this paper a \emph{topological dynamical system} (or \emph{dynamical system}, \emph{system} for short) is a pair $(X, T),$ where $X$ is a non-empty compact metric space with a metric $\rho$ and $T$  is a homeomorphism from $X$ to itself.  

Mixing is a fundamental notion in qualitative theory of dynamical systems, it refers to indecomposability of dynamics and synchronization of transfer times between open sets. In the hierarchy of mixing there is a notion, namely that of mildly mixing which was introduced by Glasner and Weiss \cite{GW06}, and independently by Huang and Ye \cite{HY2004}. It is well-known that  mild mixing is stronger than weak mixing, and weaker than strong mixing. A dynamical system $(X,T)$ is said to be \emph{topologically mildly mixing} if the product system with any transitive system is still transitive. As is shown in \cite{HY2004}, when $(X,T)$ is minimal, another characterization of mild mixing, via the hitting time set of two non-empty open subsets, goes as follows: a minimal system $(X,T)$ is topologically mildly mixing if for all non-empty open subsets $U,V$ of $X,$ $$N(U,V)=\{n\in \Z: U\cap T^{-n}V\neq \emptyset\}$$ is an IP$^*$-set. (A subset $A$ of an additive semigroup $S$ is \emph{IP$^*$} if for any sequence $\{n_i\}$ in $S$, there is some finite, non-empty set $\a\subset \N$ such that $\sum_{i\in \a}n_i\in A$.)   

In light of this characterization, it is natural to ask whether one may obtain, e.g., $$N(U,V_1,V_2)=\{n\in \Z: U\cap T^{-n}V_1 \cap T^{-2n}V_2\neq \emptyset\}$$ is an IP$^*$-set. The answer is yes. In particular, Cao and Shao has shown in \cite{CS22} that in  mildly mixing minimal systems, for any $d\in \N$ and non-empty open subsets $U, V_1, \dots, V_d$ of $X,$
\begin{equation}
	\{n\in \Z: U\cap T^{-n}V_1 \cap\dots \cap T^{-dn}V_d\neq \emptyset\}
\end{equation}
is an IP$^*$-set, which is entitled ``Mildly mixing of all orders.'' However, linear orders proved not to be ``all''. Indeed, as was shown in \cite{CS22}, for mildly mixing minimal systems, suppose $p_i(x)\in \Z[x],$ $1\le i\le d,$ having the property that no $p_i$ and no $p_i-p_j$ is constant, $1\le i\neq j\le d,$ one obtains 
\begin{equation}\label{ordinary polynomials}
\{n\in \Z: U\cap T^{-p_1(n)}V_1 \cap\dots \cap T^{-p_d(n)}V_d\neq \emptyset\}
\end{equation} 
is an IP$^*$-set.
Thus we see that mildly  mixing implies not merely mildly mixing of higher linear orders but of higher polynomial orders as well. The next question that arises is this: do such polynomial functions constitute a suitably ``most general class'' of integer sequences along which mildly mixing minimal systems are well behaved? In this paper, we give a negative answer.

The goal of this paper is extending (\ref{ordinary polynomials}) to classes of sequences which are more general than those of the form $p(n),$ where $p$ is a polynomial satisfying $p(\Z)\subset \Z$. For example, we show that for every mildly mixing minimal system $(X,T)$ and all non-empty open subsets $U, V_1, \dots, V_d$ of $X,$
\begin{equation}\label{a}
\{n\in \Z: U\cap T^{-\lel\pi n\lel \sqrt{5}n^2+2n \rr \rr}V_1 \cap\dots \cap T^{-d\lel\pi n\lel \sqrt{5}n^2+2n \rr \rr}V_d\neq \emptyset\}
\end{equation} 
is an IP$^*$-set, where $\lel a\rr$ ($a\in \R$) means  the minimum integer which is the nearest integer to $a$. More generally, the role of the sequence $\lel\pi n\lel \sqrt{5}n^2+2n \rr \rr$ in (\ref{a}) can be played by any integer-valued generalized polynomial under suitable hypotheses. The set of \emph{integer-valued generalized polynomials} $\Z\to \Z$ is the smallest set $\mathcal{G}$ that is a function algebra (i.e., is closed under sum and products) containing $\Z[x]$ and having the additional property that for all $k\in \N$, $a_1,\dots, a_k\in\R$ and $p_1, \dots, p_k\in \mathcal{G},$ the mapping $n\to \lel \sum_{i=1}^{k}a_ip_i(n)\rr$ is in $\mathcal{G}.$ Generalized polynomials have been studied extensively, see for example the remarkable paper by Bergelson and Leibman \cite{BL2007} and references therein. In this paper we note that our definition of the generalized polynomials is slightly different from the usual one.

In the other hand, measurable dynamics and topological dynamics are two sister branches of the theory of dynamical systems that use similar words to describe different but parallel notions in their respective theories. For a measurable preserving system $(X,\mathcal{B},\mu,T)$, where $T$ is an invertible measurable preserving transformation acting on a probability space $(X,\mathcal{B},\mu),$ which is called \emph{mildly mixing} if for any $A, B\in \mathcal{B}$ and any $\ep>0$ the set of $n$ for which $|\mu(A\cap T^{-n}B)-\mu(A)\mu(B)|<\ep$ is an IP$^*$-set. In \cite[Section 9.5]{F81}, Furstenberg showed that mild mixing implies mild mixing of all orders. Later on, Bergelson stated in \cite[Theorem 4.8]{Bergelson87}, establishing a polynomical extension of Furstenberg's, as an unproved corollary to (the proof of) his main result. In 2009, McCutcheon formulated and proved the following result.

\medskip

\noindent {\bf Theorem}\cite[Theorem C]{MQ2009}
{\em If $p_i, 1 \le i \le d$ are generalized polynomials,  such that no $p_i$ and no $p_i-p_j,$ $1\le i < j \le d,$ is constant on an IP-set, (A subset $E$ of $\Z$ is IP if there is a sequence $\{n_i\}\subset \Z$ such that for any non -empty $\a\subset \N$, $\sum_{i\in \a}n_i\in E$.) then for any mildly mixing system $(X,\mathcal{B},\mu, T)$ and any $A_0, A_1, \dots , A_d \in \mathcal{B},$ the set $\{n\in \Z : |\mu(A_0\cap T^{-p_1(n)}A_1\cap \dots \cap T^{-p_d(n)}A_d)-\mu(A_0)\mu(A_1)\dots \mu(A_d)| < \ep \}$ is an IP$^*$-set.}

\medskip

In this paper we are interested in obtaining the topological version of the theorem above.  The next theorem is the main result of this paper.

\begin{thm} \label{thm general}
	Let $(X, T)$ be a topologically mildly mixing minimal system, $p_1, \dots, p_d$ be integer-valued generalized polynomials with the property that  $(p_1, \dots,p_d)$ is non-degenerate (see Definition \ref{def-nondegenerate}). Then for all non-empty open subsets $U , V_1, \dots, V_d $ of $X$,
	$$\{n\in \Z: U\cap T^{-p_1(n) }V_1\cap \dots \cap T^{-p_d(n) }V_d \neq \emptyset \}$$
	is an IP$^*$-set.

\end{thm}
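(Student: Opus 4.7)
The plan is to combine the topological IP-polynomial PET framework of Cao--Shao \cite{CS22}, which settled the case of ordinary polynomials, with a reduction from generalized to ordinary polynomials along IP-sequences, inspired both by the measure-preserving treatment of McCutcheon \cite{MQ2009} and by the Bergelson--Leibman \cite{BL2007} structural analysis of generalized polynomials. Since a set $A \subseteq \Z$ is IP$^*$ iff it meets every IP-set, it suffices to fix an arbitrary IP-sequence $\{n_i\}_{i\in \N}$ and exhibit some finite non-empty $\alpha \subset \N$ with $n_\alpha := \sum_{i\in \alpha} n_i$ satisfying $U \cap T^{-p_1(n_\alpha)}V_1 \cap \cdots \cap T^{-p_d(n_\alpha)}V_d \neq \emptyset$; equivalently, one must show that the return-time set belongs to every minimal idempotent of $\beta\Z$.

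\textbf{IP-regularization of the generalized polynomials.} The first substantive step is a Bergelson--Leibman-style IP-regularization of $\{n_i\}$. By passing to a sub-IP-sequence (which is permissible, since sub-IP-sets of an idempotent ultrafilter still belong to it), one arranges that for all $\alpha$ in a sufficiently cofinal collection, each $p_j(n_\alpha)$ coincides with an ordinary integer-valued polynomial $\tilde p_j$ in the variables $n_i$, $i\in \alpha$, up to a nearest-integer jump controlled by the bracket discontinuities of the $p_j$. The non-degeneracy hypothesis of Definition \ref{def-nondegenerate} on $(p_1, \dots, p_d)$ would then be transferred to non-degeneracy of $(\tilde p_1, \dots, \tilde p_d)$ as ordinary polynomials, ensuring that no $\tilde p_i$ and no $\tilde p_i - \tilde p_j$ is constant.

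\textbf{Topological PET and main obstacle.} Once the problem is cast in terms of ordinary polynomials evaluated on an IP-sequence, one deploys the topological IP-PET induction of \cite{CS22}: the base case, when the tuple is essentially linear, reduces to the hypothesis that $N(U,V)$ is IP$^*$ applied to the product system; the inductive step applies a topological IP-van der Corput operation to lower the PET-complexity of $(\tilde p_1, \dots, \tilde p_d)$, with non-degeneracy blocking trivial collapse. The chief obstacle lies in the regularization step: in the measure-preserving setting \cite{MQ2009} one is allowed an $\varepsilon$-error in recurrence, but here the intersection must be genuinely non-empty, so the bracket-induced discontinuities of the $p_j$ must be controlled exactly (or, more realistically, within a neighborhood determined by $U$ and the $V_j$) along the chosen idempotent. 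Establishing this exact control and verifying that non-degeneracy survives the reduction in the precise form demanded by the PET recursion is the principal technical challenge.
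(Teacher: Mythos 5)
Your proposal correctly identifies the general architecture (tame the bracket discontinuities along a sub-IP-set, then run a PET induction whose base case is the linear one), but the central step you rely on is both unsupplied and, as stated, not correct. You claim that after passing to a sub-IP-sequence, each $p_j(n_\alpha)$ ``coincides with an ordinary integer-valued polynomial $\tilde p_j$'' so that the ordinary-polynomial result of \cite{CS22} applies. This is not what IP-regularization gives. Even in the simplest case $p(n)=\lceil \sqrt2\, n\rceil$, regularization only yields finite additivity, $p(n_\alpha)=\sum_{i\in\alpha}\lceil\sqrt2\, n_i\rceil$; the result is a set-indexed (VIP/IP-polynomial) system, not an ordinary polynomial in the single variable $n_\alpha$, and for higher-degree generalized polynomials one gets genuinely multi-indexed expressions. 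Consequently the reduction does not land in the hypotheses of the Cao--Shao theorem, and the entire inductive engine — what you yourself defer as ``the principal technical challenge'' — is exactly the content that has to be built. A proof sketch that postpones the step where the actual difficulty lives is not a proof.

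The paper resolves this by never leaving the category of generalized polynomials. It first reduces an arbitrary integer-valued generalized polynomial to a \emph{special} one agreeing with it on a Nil Bohr$_0$-set (Lemma \ref{lem sgp_gp}), which is IP$^*$ and hence can be intersected with any IP-set. The key structural fact is then that for $m$ ``good'' and $n$ ranging over a further Nil Bohr$_0$-set, the derivative $D(p(n),m)=p(n+m)-p(n)-p(m)$ is again a special generalized polynomial of strictly smaller degree (Lemmas \ref{lem-A(p)-general}, \ref{lem-proper}), with the non-degeneracy quantity $A(\cdot)$ tracked quantitatively through the operation (Lemma \ref{lem-multi-q(ij)}). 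The PET induction is run directly on weight vectors of systems in $\widetilde{SGP}$, with a generalized-polynomial van der Waerden theorem (Theorem \ref{generalized BL96}) proved by the same induction as an auxiliary input, and the base case handled by showing degree-one special generalized polynomials map IP-sets to IP-sets (Lemma \ref{lemma:spectra of IP-set is IP}). If you want to salvage your route, you would need to replace ``ordinary polynomials'' by the IP-polynomial (VIP) framework and then prove a topological PET theorem for those systems from scratch — which amounts to the same work the paper does, in different notation.
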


As a corollary of Theorem \ref{thm general} (see Lemma \ref{equivalent:dense-delta transitive} for the reason), we have:
	
\begin{cor}
	Let $(X,T)$ be a topologically mildly mixing minimal system and $d\in \N$. Let $p_1,\dots, p_d$ be integer-valued generalized polynomials with the property that  $(p_1, \dots,p_d)$ is non-degenerate. Then for every IP-set $A$, there is a dense $G_\delta$ subset $X_0$ of $X$ such that for any $x\in X_0$,
	$$\{(T^{p_1(n)}x, \dots, T^{p_d(n)}x): n \in A\}$$
	is dense in $X^d$.
\end{cor}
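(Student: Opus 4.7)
The plan is to combine Theorem \ref{thm general} with a standard Baire category argument, exploiting the fact that any IP$^*$-set meets every IP-set (which is immediate from the definitions). Concretely, I would fix a countable base $\{W_j\}_{j\in\N}$ for the product topology on $X^d$ consisting of boxes $W_j = W_j^{(1)}\times \cdots \times W_j^{(d)}$, where each $W_j^{(i)}$ is a non-empty open subset of $X$.

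For each $j\in\N$, set
$$O_j \;=\; \bigcup_{n\in A}\bigl(T^{-p_1(n)}W_j^{(1)}\cap \cdots \cap T^{-p_d(n)}W_j^{(d)}\bigr),$$
which is open in $X$ since $T$ is a homeomorphism. Define $X_0 := \bigcap_{j\in\N} O_j$. The key step is to verify that each $O_j$ is dense in $X$. For this, let $U\subseteq X$ be an arbitrary non-empty open set and apply Theorem \ref{thm general} to the tuple $U, W_j^{(1)}, \dots, W_j^{(d)}$: the set
$$N_j(U) \;=\; \bigl\{n\in\Z : U\cap T^{-p_1(n)}W_j^{(1)}\cap \cdots \cap T^{-p_d(n)}W_j^{(d)} \neq \emptyset\bigr\}$$
is an IP$^*$-set. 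Since $A$ is an IP-set, $N_j(U)\cap A \neq \emptyset$, so choosing any $n$ in this intersection and any $x$ in the corresponding non-empty intersection yields $x\in U\cap O_j$.

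By the Baire category theorem, $X_0$ is a dense $G_\delta$ subset of $X$. Moreover, if $x\in X_0$, then for each basic open box $W_j\subseteq X^d$ the membership $x\in O_j$ produces some $n\in A$ with $T^{p_i(n)}x\in W_j^{(i)}$ for all $i$; that is, $(T^{p_1(n)}x,\dots,T^{p_d(n)}x)\in W_j$. As the $W_j$ form a base for $X^d$, the set $\{(T^{p_1(n)}x,\dots,T^{p_d(n)}x): n\in A\}$ meets every non-empty open subset of $X^d$ and is therefore dense.

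The main conceptual difficulty is already absorbed into Theorem \ref{thm general}; the present argument is a routine transfer from ``IP$^*$ hitting-time sets'' to ``dense-$G_\delta$ orbit points'' via Baire category, matching the spirit of the referenced Lemma \ref{equivalent:dense-delta transitive}. The only minor subtlety to keep an eye on is ensuring that one may apply Theorem \ref{thm general} separately to each of the countably many basic boxes, which is unproblematic because $X$ is a compact metric space and hence has a countable base.
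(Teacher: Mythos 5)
Your proposal is correct and follows essentially the same route as the paper: the authors obtain the corollary by combining Theorem \ref{thm general} (whose IP$^*$ hitting-time sets must meet the IP-set $A$) with Lemma \ref{equivalent:dense-delta transitive}, whose proof is precisely the Baire category argument over a countable base that you write out inline. The only difference is cosmetic — the paper packages the transfer into that lemma rather than repeating the argument.
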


The paper is organized as follows. In Section 2, we provide some basic notations, definitions and results  which will be used later.
In Section 3, we prove Theorem \ref{thm general} for integer-valued generalized polynomials of degree $1.$
In Section 4, we show the generalized polynomial extension of van der Waerden's theorem which is an important ingredient of our method.
In Section 5, we give  the  complete proof of the main result Theorem \ref{thm general}.
 

\section{Preliminaries}


\subsection{Subsets of integers}\
\medskip

In this paper, the set of integers and the set of positive integers are denoted by $\mathbb{Z}$
and $\mathbb{N}$ respectively.
Let $\F$ denote the family of all non-empty finite subsets of $\N,$
 i.e., $\a\in \F$ iff $\a=\{i_1,i_2,\dots, i_k\}\subset \N$, $i_1<i_2<\dots
 <i_k$ for some $k\in \mathbb{N}$.  
For $\a, \b\in \F$, we denote $\a<\b$ (or $\b>\a$) if $\max
\a<\min \b$.

Let $\{n_i\}_{i=1}^\infty$ be a sequence of $\Z$, define
$$FS(\{n_i\}_{i=1}^{\infty}) = \{n_{i_1}+n_{i_2}+\dots +n_{i_k}:
i_1<i_2<\dots <i_k, \  k\in \N \}.$$ 
Note that we do not require the elements of $\{n_i\}_{i=1}^{\infty}$ to be distinct.
We denote $n_\a=\sum_{i\in \a}n_i$, then
$$FS(\{n_i\}_{i=1}^{\infty})=\{n_\a: \a \in \F\}.$$
An {\em IP-set} is a set containing $\{n_{\alpha}: \alpha\in \mathcal{F} \}$ for some infinite sequence $\{n_i\}_{i=1}^\infty \subseteq \Z.$
Note that for any $\alpha_0\in \mathcal{F}$, the set $\{n_{\alpha}: \alpha> \alpha_0, \alpha \in \mathcal{F}\}$ is still an IP-set.
A set is called an {\em IP$^*$-set} if it intersects any IP-set. 
Let $\mathcal{F}_{IP}$ and $\F_{IP}^*$ denote the family of IP-sets and IP$^*$-sets respectively.
It is well known that the following lemmas hold (see, e.g., \cite{F81,F84}).

\begin{lem}\label{IP+IPstar}
	$\F_{IP}^*$ is a filter, i.e., $A_1, A_2\in \F_{IP}^*$ implies that $A_1\cap A_2 \in \F_{IP}^*;$ for all $A\in \mathcal{F}_{IP}$ and $B\in \F_{IP}^*,$ we have $A\cap B\in \mathcal{F}_{IP}.$
\end{lem}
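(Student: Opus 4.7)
The plan is to reduce everything to \emph{Hindman's finite sums theorem}, which asserts that any finite partition of an IP-set has at least one cell containing an IP-set; equivalently, the family $\F_{IP}$ is partition regular. This is a standard result in Furstenberg's book \cite{F81} and will do essentially all the work.

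For the first half---$A_1, A_2 \in \F_{IP}^*$ implies $A_1\cap A_2 \in \F_{IP}^*$---I would argue by contradiction. Assuming $A_1\cap A_2 \notin \F_{IP}^*$, pick an IP-set $I$ disjoint from $A_1\cap A_2$, so $I\subset A_1^c \cup A_2^c$. Partition $I = (I\cap A_1^c) \cup (I\cap A_1\cap A_2^c)$ and apply Hindman's theorem to produce an IP-set $I'$ contained in one of these two cells. If $I'\subset A_1^c$, then $I'\cap A_1=\emptyset$ contradicts $A_1\in\F_{IP}^*$; the other case is symmetric. Since $\F_{IP}^*$ is trivially upward closed, this establishes the filter property.

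For the second half---$A\in \F_{IP}$ and $B\in \F_{IP}^*$ imply $A\cap B \in \F_{IP}$---I would fix a witness $FS(\{n_i\}_{i=1}^\infty)\subset A$ and partition this IP-set as $(FS(\{n_i\})\cap B)\cup(FS(\{n_i\})\cap B^c)$. Hindman's theorem gives an IP-set inside one of the two cells; the $B^c$ cell cannot contain one without yielding an IP-set disjoint from $B$, which would contradict $B\in \F_{IP}^*$. Hence the IP-set must lie in $FS(\{n_i\})\cap B \subset A\cap B$, whence $A\cap B \in \F_{IP}$.

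The whole argument is a direct exercise in partition regularity, so I expect no real conceptual obstacle. The only point requiring attention is the correct invocation of Hindman's theorem (in its form as partition regularity of $\F_{IP}$) and a little bookkeeping to identify which cell of the finite partition ultimately contains the contradicting IP-set.
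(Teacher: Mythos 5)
Your argument is correct: both halves follow exactly as you describe from the partition regularity of $\mathcal{F}_{IP}$ (Hindman's theorem), which the paper itself records as Theorem \ref{Theorem 1.4}, so you could cite that statement directly rather than re-invoking Hindman. The paper gives no proof of this lemma---it only points to the standard references---and your derivation is the standard one, with the two-cell partitions $I=(I\cap A_1^c)\cup(I\cap A_1\cap A_2^c)$ and $FS(\{n_i\})=(FS(\{n_i\})\cap B)\cup(FS(\{n_i\})\cap B^c)$ doing precisely the right bookkeeping.
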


\begin{lem}\label{divisible}
	Let $m\neq 0$ be an integer. Any IP-set contains an IP-set consisting of integers divisible by $m.$
\end{lem}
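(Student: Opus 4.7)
The plan is to start from an IP-set containing $FS(\{n_i\}_{i=1}^{\infty})$ for some sequence $\{n_i\}\subset \Z$, and to extract inside it an IP-set generated by a new sequence $\{n'_j\}_{j=1}^{\infty}$ all of whose finite sums are divisible by $m$. A standard pigeonhole argument on partial sums modulo $m$ will do the job.

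More precisely, first I would form the partial sums $s_k = n_1 + n_2 + \cdots + n_k$ and consider their residues $s_k \bmod m$. Since there are only $|m|$ possible residues but infinitely many $k$, some residue class $r \in \{0, 1, \dots, |m|-1\}$ contains an infinite subsequence $k_1 < k_2 < \cdots$ with $s_{k_j} \equiv r \pmod{m}$ for every $j$. Setting $k_0 = 0$ is not necessary; I will just define
$$n'_j \;=\; s_{k_{j+1}} - s_{k_j} \;=\; n_{k_j+1} + n_{k_j+2} + \cdots + n_{k_{j+1}}, \qquad j \ge 1,$$
so that each $n'_j$ is divisible by $m$ by the choice of the $k_j$.

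Next I would check that $FS(\{n'_j\}_{j=1}^{\infty})$ is contained in $FS(\{n_i\}_{i=1}^{\infty})$, which is contained in the original IP-set. Indeed, for any $\alpha \in \F$, the sum $\sum_{j\in \alpha} n'_j$ equals $\sum_{i \in \beta} n_i$, where $\beta = \bigcup_{j \in \alpha}\{k_j+1,\dots, k_{j+1}\}$ is a finite subset of $\N$; the intervals are pairwise disjoint because $k_1 < k_2 < \cdots$, so $\beta \in \F$. Since each summand $n'_j$ is a multiple of $m$, so is $\sum_{j\in \alpha} n'_j$, and hence $FS(\{n'_j\}_{j=1}^{\infty})$ is an IP-set contained in the original IP-set and consisting entirely of integers divisible by $m$.

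I do not anticipate any real obstacle: the argument is a one-shot pigeonhole plus telescoping. The only point to verify with care is the bookkeeping of indices, namely that the blocks $\{k_j+1,\dots, k_{j+1}\}$ used to define the $n'_j$ are disjoint and ordered, so that arbitrary finite unions of them remain legitimate elements of $\F$ and the resulting sums land in $FS(\{n_i\}_{i=1}^{\infty})$.
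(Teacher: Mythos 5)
Your argument is correct and complete: the pigeonhole on the partial sums $s_k=n_1+\cdots+n_k$ modulo $m$, followed by telescoping over the disjoint blocks $\{k_j+1,\dots,k_{j+1}\}$, does produce a sequence $\{n'_j\}$ with $FS(\{n'_j\})\subseteq FS(\{n_i\})$ and every finite sum divisible by $m$; the index bookkeeping you flag is exactly right, since the blocks are pairwise disjoint and ordered. For comparison: the paper does not prove this lemma at all, stating it as well known and citing Furstenberg, and the argument usually given in those sources runs through partition regularity of IP-sets (the paper's Theorem \ref{Theorem 1.4}, i.e.\ Hindman's theorem): one partitions $FS(\{n_i\})$ into residue classes mod $m$, extracts an IP-subset $FS(\{m_i\})$ inside a single class $C_r$, and then observes that $m_1, m_2, m_1+m_2\in C_r$ forces $r\equiv 0 \pmod m$. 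Your route is genuinely more elementary, trading the heavy combinatorial input of Hindman's theorem for a one-step pigeonhole, at the cost of being specific to this divisibility statement rather than illustrating the general partition-regularity machinery the paper uses elsewhere.
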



\subsection{Partition regular sets} \
\medskip

Bergelson, Hindman and Kra \cite{BHK96} introduced the partition regular sets in $\mathbb{N}$.
We now extend it to $\mathbb{Z}$, and generalize some results to more general cases.

Let $\mathcal{B}$ be a set of subsets of $\Z.$ 
We say that $\mathcal{B}$ is \emph{partition regular} if whenever $A_1 \cup\dots \cup A_n\in \mathcal{B}$ one has some $A_i\in \mathcal{B}$ with $i\in\{1,\dots,n\}.$  

\medskip

The following well known theorem shows that $\mathcal{F}_{IP}$ is partition regular.
\begin{thm}\cite[Theorem 1.4]{F84}\label{Theorem 1.4}
	Let $A$ be an IP-set. If $A$ is partitioned into finitely many sets, $A=C_1 \cup \dots\cup C_r,$ then some $C_j$ is an IP-set with $j \in \{1, \dots, r\}$.
\end{thm}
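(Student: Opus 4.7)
The plan is to deduce partition regularity of IP-sets from the existence of idempotent ultrafilters in the Stone--\v{C}ech compactification $\beta\Z$, following the Galvin--Glazer approach. First I would recall the semigroup structure on $\beta\Z$ (identified with the set of ultrafilters on $\Z$): for $p,q\in\beta\Z$, set $A\in p+q$ iff $\{n\in\Z:A-n\in q\}\in p$. With the usual topology on $\beta\Z$, this operation makes $(\beta\Z,+)$ a compact right-topological semigroup, so by Ellis's theorem it contains idempotents.

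The heart of the argument is the following Galvin--Glazer correspondence, which I would establish as the main lemma: a set $B\subset\Z$ is an IP-set if and only if there exists an idempotent $p=p+p\in\beta\Z$ with $B\in p$. For the ``only if'' direction, given $B\supset FS(\{n_i\}_{i=1}^\infty)$, I would form the nested family of closures $K_N=\overline{FS(\{n_i\}_{i\ge N})}\subset\beta\Z$, verify that $K:=\bigcap_{N\ge 1}K_N$ is non-empty, compact, and closed under $+$, and then apply Ellis's theorem inside $K$ to obtain an idempotent $p\in K$; since $K_1\subset\overline{B}$, we get $B\in p$. For the ``if'' direction, I would use idempotency to build the witnessing sequence inductively: given $n_1,\dots,n_k$ with $\{n_\alpha:\alpha\subset\{1,\dots,k\}\}\subset B$, the set $B\cap\bigcap_{\alpha}(B-n_\alpha)$ lies in $p$ by idempotency, so is non-empty, allowing a choice of $n_{k+1}$ in it.

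Given the lemma, the theorem follows in one line: pick an idempotent $p$ with $A\in p$; since $p$ is an ultrafilter and $A=C_1\cup\dots\cup C_r$, some $C_j\in p$, and then the same lemma forces $C_j$ to be an IP-set.

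The main obstacle is verifying that the intersection $K=\bigcap_N K_N$ is a subsemigroup of $\beta\Z$. This reduces to showing that if $p,q\in K$ and $A\in p+q$, then for every $N$ one can locate an element of $FS(\{n_i\}_{i\ge N})$ witnessing membership; the argument requires unpacking the definition of $+$ on $\beta\Z$ and chaining two successive applications of the defining property of $K$ (first for $p$, then for $q$ on a suitable translate), which is where one must be most careful to get the right inequalities on the indices. Once this is in place, everything else is routine ultrafilter bookkeeping together with Ellis's fixed-point theorem.
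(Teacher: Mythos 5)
The paper offers no proof of this statement: it is quoted verbatim from Furstenberg as a known result (it is Hindman's finite sums theorem, phrased as partition regularity of the family of IP-sets), so there is no in-paper argument to compare yours against. Your route is the classical Galvin--Glazer ultrafilter proof, which is essentially the standard modern proof of the cited theorem. The architecture is right: idempotents exist in $(\beta\Z,+)$ by Ellis's theorem, membership in some idempotent ultrafilter characterizes IP-sets, and the ultrafilter property then disposes of the partition in one line. Your handling of the point you flag as the main obstacle -- that $K=\bigcap_N K_N$ is a subsemigroup -- is also sound: for $p,q\in K$ and $n_\alpha\in FS(\{n_i\}_{i\ge N})$ one has $FS(\{n_i\}_{i\ge N})-n_\alpha\supseteq FS(\{n_i\}_{i>\max\alpha})\in q$, whence $FS(\{n_i\}_{i\ge N})\in p+q$.

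The one step that fails as literally written is the induction in the ``if'' direction of your key lemma. From $n_\alpha\in B$ alone you cannot conclude $B-n_\alpha\in p$, so the claim that $B\cap\bigcap_\alpha(B-n_\alpha)$ lies in $p$ ``by idempotency'' does not follow: idempotency only gives that $B^\star:=\{n\in B: B-n\in p\}$ belongs to $p$, i.e., that $B-n\in p$ for $p$-most $n$, not for every element of $B$. The standard repair is to run the induction inside $B^\star$ rather than $B$: first prove the sublemma that for an idempotent $p$ and $n\in B^\star$ one has $B^\star-n\in p$ (not merely $B-n\in p$), then choose $n_{k+1}\in B^\star\cap\bigcap_\alpha(B^\star-n_\alpha)$, which keeps all new finite sums inside $B^\star$ and hence keeps the relevant intersection in $p$ at the next stage. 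With that adjustment your argument is complete and coincides with the proof in the source the paper cites.
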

 
%

Our one aim is to generalize \cite[Lemma 2.2]{BHK96} (see Lemma \ref{lemma:partition regular}), and for that we need to recall some notations.

\medskip 

For a real number $a\in \mathbb{R}$, let $||a||=\inf\{|a-n|: n\in \mathbb{Z}\}$ and 
$$\lel a \rr=\inf\{m\in \mathbb{Z}: |a-m|=||a||\}.$$
We put $\{a\}=a-\lel a\rr$, then $\{a\}\in (-\frac{1}{2}, \frac{1}{2}]$.

We denote $[a]$ the greatest integer not exceeding $a$ and put $\{\{ a\}\}=a-[a]$, then $\{\{a\}\}\in [0, 1)$.

We have the following observations: 
\begin{lem} \label{finite sum}
	Let $k\in \mathbb{N}$ and $r_1, \dots, r_k\in \mathbb{R}$. If 
	\begin{enumerate}
		\item If $-\frac{1}{2}< \{r_1\}+\dots + \{r_k\}\leq \frac{1}{2},$ 
	         then $\lel r_1 +\dots +r_k \rr =\lel r_1\rr+ \dots + \lel r_k \rr.$ 
		\item If $0\le \{\{r_1\}\}+\dots + \{ \{r_k \}\}< 1,$ 
		      then $[r_1+\dots + r_k] =[r_1] + \dots + [r_k].$ 
	\end{enumerate}
	
\end{lem}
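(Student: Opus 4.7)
The plan is to reduce both parts to direct bookkeeping using the canonical decompositions $r_i=\lel r_i\rr + \{r_i\}$ (for part (1)) and $r_i=[r_i]+\{\{r_i\}\}$ (for part (2)). In either case summing yields $\sum_{i=1}^k r_i = S + R$, where $S$ is an integer (the sum of the integer parts) and $R$ is the corresponding sum of remainders; the hypotheses have been tailored precisely so that $R$ lands in the characterizing interval for the corresponding rounding operation, which forces $S$ to be the integer part of the full sum.

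For part (1), I would set $N=\sum_{i=1}^k \lel r_i \rr \in \Z$ and $t=\sum_{i=1}^k \{r_i\}$, so that $\sum_{i=1}^k r_i = N+t$ with $t\in(-\tfrac12,\tfrac12]$ by hypothesis. I would then verify $\lel N+t\rr=N$ directly from the definition $\lel a\rr=\inf\{m\in\Z:|a-m|=\|a\|\}$: if $|t|<\tfrac12$ then $N$ is uniquely the nearest integer to $N+t$; if $t=\tfrac12$ then $N$ and $N+1$ are both nearest, and the minimum-integer convention selects $N$. Either way, $\lel\sum_{i=1}^k r_i\rr=N=\sum_{i=1}^k\lel r_i\rr$. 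Part (2) is strictly easier: with $M=\sum_{i=1}^k[r_i]\in\Z$ and $s=\sum_{i=1}^k\{\{r_i\}\}$, the hypothesis gives $s\in[0,1)$, so $\sum_{i=1}^k r_i=M+s$ with $s\in[0,1)$, and $[M+s]=M$ holds by the defining property of the floor.

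No serious obstacle is anticipated; the only mild subtlety is the tie-breaking at $t=\tfrac12$ in part (1), which is exactly why the paper's definition of $\lel\cdot\rr$ selects the smaller of the two nearest integers when $a$ is a half-integer. The entire argument uses nothing beyond the definitions of $\{a\}$, $\{\{a\}\}$, $\lel\cdot\rr$ and $[\cdot]$ stated just before the lemma.
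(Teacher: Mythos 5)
Your proof is correct; the paper states Lemma \ref{finite sum} as an observation without proof, and your argument (decompose $\sum r_i$ as integer part plus remainder, then check the remainder lies in the defining interval, with the tie at $t=\tfrac12$ resolved by the minimum-integer convention in the definition of $\lel\cdot\rr$) is exactly the intended, standard verification.
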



\begin{lem} \label{lemma:partition regular}
	Let $\mathcal{B}$ be a set of subsets of $\Z $ such that
	\begin{enumerate}
		\item  \label{partition regular}$\mathcal{B}$ is partition regular.
		\item  \label{sum} For each $A\in \mathcal{B}$, there exists $x, y\in A$ with $x+y\in A$.
	\end{enumerate}
	Then for each $b_i, \alpha_i, c_j, \beta_j \in \mathbb{R}$, $ i=1, \dots, t_1, j=1, \dots, t_2$, $t_1, t_2\in \mathbb{N}$, any $\epsilon>0$ and every $A\in \mathcal{B}$,
	there exists $B\in \mathcal{B}$ such that $B\subset A$ and for all $n\in B$, 
	$$\{ b_i\lel \alpha_i n\rr\}, \{\alpha_in\}, \{ c_j\lel \beta_j n\rr\} \ \text{and} \ \{\beta_jn\} \in (-\epsilon, \epsilon) $$
	for all $i=1, \dots, t_1$ and $j=1, \dots, t_2$.
\end{lem}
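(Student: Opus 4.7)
The plan is to reduce to a one-parameter statement and then iterate. Specifically, I will first prove the following auxiliary claim: for any $A\in\mathcal{B}$, any $\gamma\in\R$, and any $\eta>0$, there exists $B\subset A$ with $B\in\mathcal{B}$ such that $\{\gamma n\}\in(-\eta,\eta)$ for every $n\in B$. Granting this, the full lemma follows by applying it successively with $\gamma$ ranging over the finite list
$\alpha_1,\dots,\alpha_{t_1},\beta_1,\dots,\beta_{t_2},b_1\alpha_1,\dots,b_{t_1}\alpha_{t_1},c_1\beta_1,\dots,c_{t_2}\beta_{t_2}$, each time passing to a smaller member of $\mathcal{B}$. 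Hypothesis (2) is inherited by every element of $\mathcal{B}$, so the iteration is legal; and since the fractional-part estimates are pointwise conditions, they survive the nested refinement. One thus arrives at a single $B\in\mathcal{B}$ with $B\subset A$ on which all of the listed fractional parts are simultaneously smaller than a prescribed $\eta>0$.

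For the one-parameter claim, partition the fundamental domain $(-1/2,1/2]$ into finitely many consecutive half-open subintervals $I_1,\dots,I_k$ of length at most $\eta/2$, and decompose $A=A_1\cup\dots\cup A_k$ with $A_j=\{n\in A:\{\gamma n\}\in I_j\}$. By hypothesis (1), some $A_{j_0}\in\mathcal{B}$; write $I_{j_0}=(a,a+\delta]$ with $\delta\le\eta/2$. By hypothesis (2) applied to $A_{j_0}$, pick $x,y\in A_{j_0}$ with $z=x+y\in A_{j_0}$. From $\gamma z=\gamma x+\gamma y$ one gets $\{\gamma z\}-\{\gamma x\}-\{\gamma y\}=\lceil\gamma x\rceil+\lceil\gamma y\rceil-\lceil\gamma z\rceil\in\Z$, while $\{\gamma x\},\{\gamma y\},\{\gamma z\}\in(a,a+\delta]$ forces this integer to lie in the length-$3\delta$ interval $(-a-2\delta,-a+\delta)$. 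A short case analysis shows that for $\delta$ sufficiently small the only integer fitting is $0$, and $0\in(-a-2\delta,-a+\delta)$ gives $a\in(-2\delta,\delta)$. Consequently $I_{j_0}\subset(-2\delta,2\delta)\subset(-\eta,\eta)$, and $B=A_{j_0}$ works.

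To wrap up, run the iteration with threshold $\eta$ chosen so that $(1+\max_i|b_i|+\max_j|c_j|)\eta<\epsilon/2$. Using $\lceil\alpha_i n\rceil=\alpha_i n-\{\alpha_i n\}$ one writes $b_i\lceil\alpha_i n\rceil=\lceil b_i\alpha_i n\rceil+r_i$ with $r_i=\{b_i\alpha_i n\}-b_i\{\alpha_i n\}$; the bound $|r_i|<\epsilon/2<1/2$ makes $\lceil b_i\alpha_i n\rceil$ the nearest integer to $b_i\lceil\alpha_i n\rceil$, so $\{b_i\lceil\alpha_i n\rceil\}=r_i\in(-\epsilon,\epsilon)$; the analogous computation handles $\{c_j\lceil\beta_j n\rceil\}$. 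The main obstacle is the one-parameter lemma above: partition regularity by itself only produces \emph{some} cell in $\mathcal{B}$, and it is the Schur-triple hypothesis (2) that pins the chosen cell $I_{j_0}$ to a neighborhood of $0$. The only delicate point there is tracking the fractional-part arithmetic near the endpoints $\pm 1/2$ of the fundamental domain to confirm that for $\delta$ small the integer $0$ is indeed the unique one fitting in the interval.
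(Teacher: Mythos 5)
Your proof is correct, and it takes a genuinely different route from the paper's for a substantive part of the lemma. The combinatorial core is the same in both arguments: chop the fundamental domain $(-\tfrac12,\tfrac12]$ into arcs of length at most $\eta/2$, use partition regularity to place one arc--cell in $\mathcal{B}$, and use a Schur triple $x,y,x+y$ inside that cell to force the arc to sit at $0$; your endpoint analysis is sound, since the integer $\{\gamma z\}-\{\gamma x\}-\{\gamma y\}$ lies in an interval of length $3\delta$ about $-a$ and for $\delta<\tfrac14$ the only admissible integer is $0$. Where you diverge is in the treatment of the bracketed quantities $\{ b_i\lceil \alpha_i n\rceil\}$: the paper runs the Schur argument on these directly, which forces a two-stage structure --- first pin $\{\alpha_i n\}$ near $0$ so that $\lceil \alpha_i(x+y)\rceil=\lceil\alpha_i x\rceil+\lceil\alpha_i y\rceil$ on the cell (their Lemma \ref{finite sum}), and only then argue that the cell for $b_i\lceil\alpha_i \cdot\rceil$ is the one at $0$. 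You instead dispose of the brackets by pure algebra, $b_i\lceil\alpha_i n\rceil=\lceil b_i\alpha_i n\rceil+(\{b_i\alpha_i n\}-b_i\{\alpha_i n\})$, so that only linear phases $\{\gamma n\}$ ever enter the combinatorial argument; you also refine one parameter at a time rather than partitioning simultaneously, which is legitimate because both hypotheses on $\mathcal{B}$ apply to every set you produce along the way, and it avoids the multi-index bookkeeping of the paper's simultaneous partition. Two small points to make explicit: you should assume $\epsilon\le\tfrac12$ without loss of generality (for larger $\epsilon$ the conclusion is automatic since $\{a\}\in(-\tfrac12,\tfrac12]$ always), as this is what guarantees $|r_i|<\tfrac12$ and hence that $\lceil b_i\alpha_i n\rceil$ really is the nearest integer to $b_i\lceil\alpha_i n\rceil$, so that $\{b_i\lceil\alpha_i n\rceil\}=r_i$; and the case analysis excluding the integers $\pm1$ should be recorded (it needs only $\delta<\tfrac14$).
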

\begin{proof}
	Let $b_i, \alpha_i, c_j, \beta_j \in \mathbb{R}$, $ i=1, \dots, t_1, j=1, \dots, t_2$, $t_1, t_2\in \mathbb{N}$, $\epsilon>0$ and $A\in \mathcal{B}$ be given.
	Pick $m\in \mathbb{N}$ such that $\frac{1}{2m}<\epsilon$ and $m> 2$.
	
	For each $i=1, \dots, t_1$, put
	$$C_1^i=\{n\in A: -\frac{1}{2m} <  \{\alpha_i n\} \le \frac{1}{2m}  \},$$
	$$D^i_1=\{n\in A: -\frac{1}{2m} < \{b_i \lel \alpha n_i \rr\} \le \frac{1}{2m}\},$$
	$$C^i_k=\{n\in A: \frac{k-1}{2m} <  \alpha_i n  -  [ \alpha_i n ]  \le \frac{k}{2m}\}, $$
	$$D^i_k=\{n\in A: \frac{k-1}{2m} < b_i \lel \alpha_i n \rr - [ b_i \lel \alpha_i n \rr ] \le  \frac{k}{2m}\},$$
	where $k\in \{2, 3, \dots, 2m-1\}$.
	And for each $j=1, \dots, t_2$, put
	
	$$E_1^j=\{n\in A: -\frac{1}{2m} < \{\beta_j n\} \le \frac{1}{2m}  \},$$
	$$F^j_1=\{n\in A: -\frac{1}{2m} < \{c_j \lel \beta_j c_j \rr\} \le \frac{1}{2m}\},$$
	$$E^j_k=\{n\in A: \frac{k-1}{2m} <  \beta_j n  -  [\beta_j n ]  \le \frac{k}{2m}\},   $$
	$$F^j_k=\{n\in A: \frac{k-1}{2m} < c_j \lel \beta_j n \rr - [ c_i \lel \beta_i n \rr ] \le \frac{k}{2m}\},$$
	where $k\in \{2, 3, \dots, 2m-1\}$.
	
	Put $$A_1=\bigcap_{i=1}^{t_1} C_1^i \cap \bigcap_{i=1}^{t_1} D_1^i \cap \bigcap_{j=1}^{t_2}E_1^j \cap \bigcap_{j=1}^{t_2}F_1^j $$
	and 
	$$A_{(k^C, k^D, k^E, k^F)}=\bigcap_{i=1}^{t_1} C^i_{k^C} \cap \bigcap_{i=1}^{t_1} D^i_{k^D} \cap \bigcap_{j=1}^{t_2} E^j_{k^E} \cap \bigcap_{j=1}^{t_2} F^j_{k^F}$$
	where $k^C, k^D, k^E, k^F\in \{1, \dots, 2m-1\}$, and $A_{(1, 1, 1, 1)}=A_1$.
	
	Clearly,
	$$A=\bigcup_{k^C, k^D, k^E, k^F=1, \dots, 2m-1 }A_{(k^C, k^D, k^E, k^F)}.$$
	
	By condition (\ref{partition regular})  we have $A_{(k^C, k^D, k^E, k^F)}\in \mathcal{B}$ for some $k^C, k^D, k^E, k^F \in \{1, \dots, 2m-1\}$.
	We will show that such $k^C, k^D, k^E, k^F $ satisfy that $k^C=k^D=k^E= k^F=1$.
	
	\begin{claim} \label{CE=1}
		$k^C=1$ and $k^E=1$.
	\end{claim}
	\begin{proof}
		Firstly, we show that $ k^C=1.$ Let $x$ and $y$ be two elements of $A_{(k^C, k^D, k^E, k^F)}$, then for all $i=1, \dots, t_1$, one has $x, y\in  C^i_{k^C}$.
		
		Fix $i=1, \dots, t_1$.
		If $2\le k^C <  m-1 $, then $x+y\in C^i_{2k^C-1} \cup C^i_{2k^C}$.
		If $k^C=m$, then $x+y\in C^i_{2m-1} \cup C^i_{1}$.
		If $m+1 \le  k^C \leq 2m-1$, then $x+y\in C^i_{2(k^C-m)-1} \cup C^i_{2(k^C-m)} $.
		In any of these cases $x+y\notin C^i_{ k^C },$ so $x+y\notin A_{(k^C, k^D, k^E, k^F)}$and hence by condition (\ref{sum}) we must have $ k^C=1$.
		
		By the same method, we have $k^E=1$.
		
	\end{proof}
	
	\begin{claim}
		$k^D=1$ and $k^F=1$.
	\end{claim}
	\begin{proof}
		Firstly , we show that $k^D=1$.
		Since $\frac{1}{2m}<\frac{1}{2}$ and by Claim \ref{CE=1} and Lemma \ref{finite sum}, then for all $x,y\in A_{(1, k^D, 1, k^F)}$ we have
		\begin{equation} \label{2-sum}
		\lel \alpha_i x \rr +\lel \alpha_i y \rr=\lel \alpha_i (x+y) \rr
		\end{equation}
		for any  $i=1, \dots, t_1$.
		
		Let $x, y \in A_{(1, k^D, 1, k^F)} $, then for all $i=1, \dots, t_1$, one has $x, y\in  D^i_{k^D}$.
		
		Now fix $i=1, \dots, t_1$. If $2\le k^D \le m-1$, that is $$\frac{ k^D-1}{2m} <b_i \lel \alpha_i x \rr - [ b_i \lel \alpha_i x \rr ] \le \frac{ k^D}{2m}$$
		and 
		$$\frac{ k^D-1}{2m} < b_i \lel \alpha_i y \rr - [ b_i \lel \alpha_i y \rr ] \le  \frac{ k^D}{2m}.$$
		Then $0<\{\{  b_i \lel \alpha_i x \rr    \}\}+ \{\{  b_i \lel \alpha_i y \rr    \}\} <1 $, thus by Lemma \ref{finite sum} and $(\ref{2-sum})$, we have
		$$[ b_i \lel \alpha_i x \rr ] +[ b_i \lel \alpha_i y \rr ] = [b_i \lel \alpha_i x \rr+b_i \lel \alpha_i y \rr  ]= [ b_i \lel \alpha_i (x+y) \rr ],$$ 
		which shows that $x+y \in D^i_{2k^D-1}\cup D^i_{2k^D}$.
		By the same discussion, we have
		if $k^D=m$, then $x+y \in D^i_{2k^D-1 }\cup D_1^i$;
		if $m+1 \le k^D \le 2m-1$, then $x+y \in D^i_{2(k^D-m)-1} \cup D^i_{ 2(k^D-m)}$.
		In any of these cases $x+y\notin  A_{(1, k^D, 1, k^F)}$, and hence by condition (\ref{sum}) we must have $k^D=1$.
		
		By the same method, we have $k^F=1$.
		
	\end{proof}
	
	So far, for all $n\in A_{(1,1,1,1)}\in \mathcal{B}$ we have  
	$$\{ b_i\lel \alpha_i n\rr\}, \{\alpha_in\}, \{ c_j\lel \beta_j n\rr\} \ \text{and} \ \{\beta_jn\} \in (-\epsilon, \epsilon) $$
	for all $i=1, \dots, t_1$ and $j=1, \dots, t_2$. 
	
\end{proof}

\begin{rem}\label{the hypotheses}
	$\mathcal{F}_{IP}$ satisfies the hypotheses of Lemma \ref{lemma:partition regular}.
\end{rem}

\begin{lem}\label{lemma:spectra of IP-set is IP 1}
	Let $p(n)=\sum\limits_{i=1}^{\rm{t_1}} \left\lceil b_i\left\lceil \alpha_i n\right\rceil \right\rceil-\sum\limits_{j=1}^{\rm{t_2}} \left\lceil c_j\left\lceil \beta_j n\right\rceil \right\rceil$ with $t_1,t_2\in \N,\ b_i, \alpha_i, c_j, \beta_j \in \mathbb{R},\ i=1, \dots, t_1$, $j=1, \dots, t_2 $ and $$\sum_{i=1}^{t_1}b_i\alpha_i-\sum_{j=1}^{t_2}c_j\beta_j \neq 0.$$
	If $A$ is an IP-set, then
	 $\{p(n): n\in A \}$ is also an IP-set. 
\end{lem}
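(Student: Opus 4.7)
The plan is to extract from $A$ a specially chosen IP-generating sequence $\{m_k\}$ along which $p$ becomes fully additive on finite sums; the image sequence $\{p(m_k)\}$ will then witness $\{p(n):n\in A\}$ as an IP-set.

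Fix positive reals $\ep_k$ with $\sum_{k\ge 1}\ep_k<\tfrac12$ (for instance $\ep_k=2^{-k-2}$), and write $A\supset\{n_\a:\a\in\F\}$ for some infinite sequence $\{n_i\}_{i\ge 1}\subset\Z$. I would build $\{m_k\}$ inductively. Setting $\max I_0=0$, at stage $k\ge 1$ consider the IP-set
$$A_k=\{n_\a:\a\in\F,\ \min\a>\max I_{k-1}\}\subset A,$$
and apply Lemma~\ref{lemma:partition regular} (whose hypotheses are verified for $\F_{IP}$ by Remark~\ref{the hypotheses}) to $A_k$ with the real constants $b_i,\a_i,c_j,\b_j$ and tolerance $\ep_k$. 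This produces an IP-subset $B_k\subset A_k$ every element $n$ of which satisfies
$$|\{\a_i n\}|,\ |\{\b_j n\}|,\ |\{b_i\lel\a_i n\rr\}|,\ |\{c_j\lel\b_j n\rr\}|<\ep_k$$
for all admissible $i,j$. Pick any $m_k\in B_k$; since $m_k\in A_k$, we may write $m_k=n_{I_k}$ for a finite set $I_k\subset\N$ with $\min I_k>\max I_{k-1}$, closing the induction.

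By the pairwise disjointness of $I_1,I_2,\dots$, every finite sum $m_\a:=\sum_{k\in\a}m_k$ equals $n_{\bigcup_{k\in\a}I_k}$ and therefore lies in $A$. For each fixed $i$, the estimate $|\sum_{k\in\a}\{\a_i m_k\}|<\sum_{k\ge 1}\ep_k<\tfrac12$ combined with Lemma~\ref{finite sum}(1) yields $\lel\a_i m_\a\rr=\sum_{k\in\a}\lel\a_i m_k\rr$. Multiplying by $b_i$ and reapplying Lemma~\ref{finite sum}(1) to the numbers $b_i\lel\a_i m_k\rr$, whose fractional parts are also $\ep_k$-small, produces $\lel b_i\lel\a_i m_\a\rr\rr=\sum_{k\in\a}\lel b_i\lel\a_i m_k\rr\rr$. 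The symmetric argument for the $c_j\lel\b_j\,\cdot\,\rr$ block, followed by summation over $i$ and $j$, delivers the key identity $p(m_\a)=\sum_{k\in\a}p(m_k)$ for every $\a\in\F$. Hence $FS(\{p(m_k)\})\subset\{p(n):n\in A\}$, so the latter is an IP-set.

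The main obstacle is the bookkeeping in the induction: one must simultaneously ensure that arbitrary $FS$-sums of the $m_k$'s remain in $A$ (handled by the disjoint-support representation $m_k=n_{I_k}$) and that all four families of fractional parts appearing in Lemma~\ref{finite sum} stay small along sums of unbounded length (forcing the summability of the $\ep_k$). The non-vanishing condition $\sum_i b_i\a_i\neq\sum_j c_j\b_j$ plays no role in showing $\{p(n):n\in A\}$ is an IP-set, but it guarantees the IP-set is non-trivial (since $p(m_k)\approx m_k\cdot(\sum_i b_i\a_i-\sum_j c_j\b_j)$ on $B_k$), which is what will be needed when the lemma is invoked elsewhere.
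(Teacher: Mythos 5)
Your proof is correct. It rests on the same two ingredients as the paper's argument (Lemma~\ref{lemma:partition regular} via Remark~\ref{the hypotheses}, and Lemma~\ref{finite sum}), but the mechanism for controlling fractional parts along arbitrarily long sums is genuinely different. The paper applies Lemma~\ref{lemma:partition regular} \emph{once}, with the fixed tolerance $\epsilon=\tfrac14$, to get a single IP-set $B\subset A$ and a sequence $\{a_n\}$ with $FS(\{a_n\})\subseteq B$; additivity of $p$ over a finite sum is then proved by induction two terms at a time, the crucial point being that every partial sum $\sum_{n\in F'}a_n$ is itself an element of $B$ and hence again has all fractional parts in $(-\tfrac14,\tfrac14)$, so Lemma~\ref{finite sum}(1) only ever needs to be invoked for $k=2$. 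You instead apply Lemma~\ref{lemma:partition regular} infinitely many times with summable tolerances $\epsilon_k$, extract one element $m_k$ per stage with disjoint supports $I_k$ to keep all finite sums inside $A$, and then invoke the multi-term version of Lemma~\ref{finite sum}(1) in one shot using $\sum_k\epsilon_k<\tfrac12$. The paper's route is more economical (one application of the partition lemma, no bookkeeping of supports beyond $FS(\{a_n\})\subseteq B$); yours is more robust in that it never needs the partial sums themselves to lie in the good set, only the individual generators, at the cost of shrinking tolerances and the disjoint-support construction. Your closing remark about the role of the hypothesis $\sum_i b_i\alpha_i-\sum_j c_j\beta_j\neq 0$ is also accurate: neither proof uses it to establish the IP property itself.
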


\begin{proof}
 
	Let $\epsilon=\frac{1}{4}$. By Lemma \ref{lemma:partition regular} and Remark \ref{the hypotheses},  there exists an IP-set $B\subset A$ such that for all $n\in B$, we have
	$$\{ b_i\lel \alpha_i n\rr\}, \{\alpha_in\}, \{ c_j\lel \beta_j n\rr\} \ \text{and} \  \{\beta_jn\} \in (-\epsilon, \epsilon) $$
	for all $i=1, \dots, t_1$ and $j=1, \dots, t_2$.

	Pick the sequence $\{a_n\}_{n=1}^{\infty}$ of $\Z$ with $FS(\{a_n\}_{n=1}^{\infty})\subseteq B$.
	We show by induction on $|F|$ that for a finite subset $F$ of $\mathbb{N},$
	\begin{align*}
		&\sum_{n\in F}\biggl(\sum\limits_{i=1}^{\rm{t_1}} \lel b_i\lel \alpha_i a_n\rr \rr-\sum\limits_{j=1}^{\rm{t_2}} \lel c_j\lel \beta_j a_n\rr \rr\biggr) \\
		=&\sum\limits_{i=1}^{\rm{t_1}} \lel b_i\lel \alpha_i (\sum_{n\in F} a_n)\rr \rr-\sum\limits_{j=1}^{\rm{t_2}} \lel c_j\lel \beta_j (\sum_{n\in F} a_n)\rr \rr.
	\end{align*}
	This shows that $FS(\{ \sum\limits_{i=1}^{\rm{t_1}} \lel b_i\lel \alpha_i a_n\rr \rr-\sum\limits_{j=1}^{\rm{t_2}} \lel c_j\lel \beta_j a_n\rr \rr  \}_{n=1}^{\infty}) \subset \{ p(n): n\in A \}$.
	
	If $|F|=1$, the conclusion is trivial.	We now assume that $|F|>1$. Pick $m\in F$ and let $F'=F\backslash \{m\}$. By induction, we have
	\begin{align*}
		&\sum_{n\in F'}\biggl(\sum\limits_{i=1}^{\rm{t_1}} \lel b_i\lel \alpha_i a_n\rr \rr-\sum\limits_{j=1}^{\rm{t_2}} \lel c_j\lel \beta_j a_n\rr \rr\biggr) \\
		=&\sum\limits_{i=1}^{\rm{t_1}} \lel b_i\lel \alpha_i (\sum_{n\in F'} a_n)\rr \rr-\sum\limits_{j=1}^{\rm{t_2}} \lel c_j\lel \beta_j (\sum_{n\in F'} a_n)\rr \rr.
	\end{align*}
	Then 
	\begin{align*}
		&\sum_{n\in F} \biggl(\sum\limits_{i=1}^{\rm{t_1}} \lel b_i\lel \alpha_i a_n\rr \rr-\sum\limits_{j=1}^{\rm{t_2}} \lel c_j\lel \beta_j a_n\rr \rr\biggr) \\
		=& \biggl(\sum\limits_{i=1}^{\rm{t_1}} \lel b_i\lel \alpha_i a_m\rr \rr-\sum\limits_{j=1}^{\rm{t_2}} \lel c_j\lel \beta_j a_m\rr \rr\biggr) +
		\sum_{n\in F'}  \biggl(\sum\limits_{i=1}^{\rm{t_1}} \lel b_i\lel \alpha_i a_n\rr \rr-\sum\limits_{j=1}^{\rm{t_2}} \lel c_j\lel \beta_j a_n\rr \rr\biggr)\\
		=&\biggl(\sum\limits_{i=1}^{\rm{t_1}} \lel b_i\lel \alpha_i a_m\rr \rr-\sum\limits_{j=1}^{\rm{t_2}} \lel c_j\lel \beta_j a_m\rr \rr\biggr) \\
		& + \biggl(\sum\limits_{i=1}^{\rm{t_1}} \lel b_i\lel \alpha_i (\sum_{n\in F'} a_n)\rr \rr-\sum\limits_{j=1}^{\rm{t_2}} \lel c_j\lel \beta_j (\sum_{n\in F'} a_n)\rr \rr \biggr)\\
		=&\sum\limits_{i=1}^{\rm{t_1}} \lel b_i\lel \alpha_i (\sum_{n\in F} a_n)\rr \rr-\sum\limits_{j=1}^{\rm{t_2}} \lel c_j\lel \beta_j (\sum_{n\in F} a_n)\rr \rr .
	\end{align*}
	The last equation follows from that $a_m$, $\sum_{n\in F'}a_n$ are in $B$ and Lemma \ref{finite sum}.
	
\end{proof}

As a consequence of the above lemma, we have

\begin{cor}\label{frac{A}{q}}
	Let $q\in \Z. $ If $A$ is an IP-set, then $\{\lel\frac{n}{q}\rr:n\in A \}$ is an IP-set.
\end{cor}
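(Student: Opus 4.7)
The plan is to derive this corollary directly from Lemma \ref{lemma:spectra of IP-set is IP 1}, with the only real work being to express $\lel n/q \rr$ in the form required by that lemma. Recall that the lemma applies to maps of the shape $p(n) = \sum_{i=1}^{t_1} \lel b_i \lel \alpha_i n \rr \rr - \sum_{j=1}^{t_2} \lel c_j \lel \beta_j n \rr \rr$ subject to $\sum_{i} b_i \alpha_i - \sum_{j} c_j \beta_j \neq 0$, so the task is to write $\lel n/q \rr$ as such a $p(n)$.

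For this I would use the trivial fact that $\lel b m \rr = b m$ whenever $b, m \in \Z$, since the outer ceiling acts as the identity on integers. Because $\lel (1/q) n \rr$ is always an integer, this gives
$$\lel 2 \lel (1/q) n \rr \rr - \lel 1 \cdot \lel (1/q) n \rr \rr = 2 \lel n/q \rr - \lel n/q \rr = \lel n/q \rr.$$
Thus $\lel n/q \rr$ has the required form with the parameters $t_1 = t_2 = 1$, $b_1 = 2$, $c_1 = 1$, and $\alpha_1 = \beta_1 = 1/q$. The nondegeneracy condition $\sum_i b_i\alpha_i - \sum_j c_j \beta_j$ then evaluates to $2/q - 1/q = 1/q$, which is nonzero (the statement tacitly assumes $q \neq 0$, otherwise $n/q$ is meaningless).

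Applying Lemma \ref{lemma:spectra of IP-set is IP 1} with these parameters to the given IP-set $A$ yields that $\{\lel n/q \rr : n \in A\}$ is an IP-set, which is the desired conclusion. There is essentially no obstacle in this argument: the corollary is a straightforward repackaging of the preceding lemma, and the only minor observation needed is the identity above, which exploits the triviality of outer ceilings on integer arguments.
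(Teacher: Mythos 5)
Your proposal is correct and matches the paper's intent: the paper states this corollary as an immediate consequence of Lemma \ref{lemma:spectra of IP-set is IP 1} without further detail, and your argument simply supplies the specialization. Your device of writing $\lel n/q\rr = \lel 2\lel (1/q)n\rr\rr - \lel 1\cdot\lel (1/q)n\rr\rr$ to satisfy the lemma's two-sum format with $\sum b_i\alpha_i - \sum c_j\beta_j = 1/q \neq 0$ is valid (and you are right that $q\neq 0$ is tacitly assumed).
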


%
 
 And we have the following useful result.
 
 \begin{lem}\label{qA}
 	Let $q\in \Z. $ If $A$ is an IP$^*$-set, then $qA=\{qn:n\in A\}$ is an IP$^*$-set.
 \end{lem}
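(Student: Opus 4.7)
My plan is to show that $qA$ meets every IP-set, assuming $q\neq 0$ (the case $q=0$ is degenerate since $\{0\}$ fails to meet the IP-set of positive integers).

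Let $B$ be an arbitrary IP-set; I want to exhibit some $n\in A$ with $qn\in B$. First I would invoke Lemma \ref{divisible} to replace $B$ by a sub-IP-set $B'\subseteq B$ all of whose elements are divisible by $q$. On $B'$ the ``nearest integer'' operator acts trivially: for any $n\in B'$, writing $n=qk$ for some $k\in\Z$, one has $\lel n/q\rr=k=n/q$. Therefore the set $B'/q=\{n/q:n\in B'\}$ coincides with $\{\lel n/q\rr:n\in B'\}$, and Corollary \ref{frac{A}{q}} (applied to the IP-set $B'$) shows that $B'/q$ is itself an IP-set.

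Now I apply the IP$^*$ hypothesis on $A$: since $A$ meets every IP-set, $A\cap (B'/q)\neq\emptyset$. Pick $m$ in this intersection. Then $m\in A$ and $qm\in B'\subseteq B$, whence $qm\in qA\cap B$, proving $qA\cap B\neq\emptyset$. As $B$ was arbitrary, $qA$ is an IP$^*$-set.

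The argument is essentially a one-line reduction once the right lemmas are in hand; the only subtlety is making sure that when one ``divides by $q$'' the result is genuinely an IP-set, which is why the divisibility-refinement step (Lemma \ref{divisible}) is needed to eliminate the rounding in Corollary \ref{frac{A}{q}}. There is no real obstacle here, just the careful bookkeeping that $\lel \cdot /q\rr$ is the identity on multiples of $q$.
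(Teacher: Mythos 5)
Your proof is correct and follows essentially the same route as the paper's: refine $B$ to a sub-IP-set of multiples of $q$ via Lemma \ref{divisible}, observe that $\lel n/q\rr=n/q$ there so that Corollary \ref{frac{A}{q}} makes the quotient set an IP-set, and then intersect it with $A$. Your explicit exclusion of $q=0$ is a sensible clarification that the paper leaves implicit.
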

 
 \begin{proof}
 	For any given IP-set $B,$ we will show that $qA\cap B\neq \emptyset.$ By Lemma \ref{divisible} one sees that there exists $B'\subseteq B$ with $B'$ is an IP-set and $B'\subseteq q\Z.$ Let $B''=\{\lel \frac{n}{q}\rr:n\in B'\}.$ By Corollary \ref{frac{A}{q}}, one has $B''$ is an IP-set. Pick $m\in B''\cap A$ and pick $n\in B'\subseteq B$ such that $m=\lel \frac{n}{q}\rr.$ Since $q$ divides $n$ we have $m=\frac{n}{q}$ so $n=qm.$ Thus $n\in qA\cap B.$ 
 	The proof is complete.
 \end{proof}

\subsection{Topological dynamics}\
\medskip

Let $(X, T)$ be a dynamical system.
For $x\in X$, we denote the {\it orbit} of $x$ by $\text{orb}(x, T)=\{T^{n}x \in X: n\in \mathbb{Z}\}$.	
A point $x\in X$ is called a \emph{transitive point} if the orbit of $x$ is dense in $X$,
i.e., $ \overline{\text{orb}(x, T)}=X$.
A dynamical system $(X, T)$ is called \emph{minimal} if every point $x\in X$ is a transitive point.

Let $U, V \subset X$ be two non-empty open sets,
the \emph{hitting time set} of $U$ and $V$ is denoted by
$$N(U, V)=\{n\in \mathbb{Z}: U \cap T^{-n}V \neq \emptyset\}.$$

We say that $(X, T)$ is \emph{(topologically) transitive} if for all non-empty open sets $U, V \subset X$, 
$N(U, V)$ is non-empty. Two dynamical systems are \emph{weakly disjoint} if their product is transitive. 
A dynamical system is called \emph{mildly mixing} if it is weakly disjoint from any transitive system. 

We say that $(X, T)$ is \emph{IP$^*$-transitive} if for all non-empty open sets $U, V \subset X$, $N(U, V)$ is an IP$^*$-set. When $(X, T)$ is a minimal system, it is mildly mixing if and only if it is IP$^*$-transitive (see \cite{HY2004} for more details).

Let $p_i: \mathbb{Z} \rightarrow \mathbb{Z},i=1, \dots,k$, we say that $(X,T)$ is \emph{$\{p_1, \dots,p_k\}$-IP$^*$-transitive} if
for any non-empty open sets $U_i, V_i \subset X,i=1, \dots, k$,
$$N(\{p_1, \dots, p_k\},U_1\times \dots \times U_k,V_1 \times \dots \times V_k ):=\bigcap_{i=1}^{k} N(p_i, U_i,V_i)$$		
is an IP$^*$-set, where	$N(p_i, U_i,V_i):=\{n\in \mathbb{Z}: U_i \cap T^{-p_i(n)}V_i \neq \emptyset\}$, $i=1, \dots,k$. $(X, T)$ is \emph{$\{p_1, \dots,p_d\}_{\D}$-IP$^*$-transitive} if for all non-empty open sets $U , V_1, \dots, V_d \subset X$, $$\{n\in \Z: U\cap T^{-p_1(n) }V_1 \cap \dots \cap T^{-p_d(n) }V_d \neq \emptyset \}$$ is an IP$^*$-set.

\medskip

The following lemma is a generalization of \cite[Lemma 5]{KO2012}. For completeness, we include a proof.

\begin{lem}\label{Main lemma:descending sequence}
	Let $(X, T)$ be a dynamical system, $d\in \mathbb{N}$ and maps $p_1, \dots, p_d: \mathbb{Z} \to \mathbb{Z}$ such that $(X, T)$ is $\{p_1, \dots, p_d\}$-IP$^*$-transitive. Then for all non-empty open subsets $V_1, \dots, V_d$ of $X$, for any IP-set   $A=\{n_{\alpha}:\alpha\in \mathcal{F}\}$ and any subsequence $\{ r(n)\}_{n=0}^{\infty}$ of natural numbers, there is an increasing sequence $\{\alpha_j\}_{j=0}^{\infty}\subset \mathcal{F}$ such that $|n_{\alpha_0}|> r(0), |n_{\alpha_j}|>|n_{\alpha_{j-1}}|+r(|n_{\alpha_{j-1}}|)$ for all $j \ge 1$, and for each $i\in \{1, \dots, d\}$,
	there is a descending sequence $\{V_i^{(n)}\}_{n=0}^{\infty}$ of open subsets of $V_i$ such that for each $n\ge 0$, one has
	$$T^{p_i(n_{\alpha_j})}T^{-j}V_i^{(n)}\subset V_i, \ \  \text{for all} \ \ 0\le j \le n.$$
	
\end{lem}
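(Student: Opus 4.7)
The plan is to build the sequence $\{\alpha_j\}$ and the nested open sets $\{V_i^{(n)}\}$ simultaneously by induction on $n$, with each new stage absorbing exactly one fresh IP$^*$-transitivity constraint.

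\textbf{Base step ($n=0$).} The only requirement on $V_i^{(0)}$ is that $V_i^{(0)} \subset V_i$ and $T^{p_i(n_{\alpha_0})} V_i^{(0)} \subset V_i$, which is met by any non-empty open subset of $V_i \cap T^{-p_i(n_{\alpha_0})} V_i$ provided $n_{\alpha_0} \in N(p_i, V_i, V_i)$ for every $i$. Applying $\{p_1,\dots,p_d\}$-IP$^*$-transitivity with both $d$-tuples of open sets taken to be $(V_1,\dots,V_d)$, the set $\bigcap_{i=1}^d N(p_i, V_i, V_i)$ is IP$^*$. By Lemma \ref{IP+IPstar}, its intersection with $A$ is again an IP-set, hence contains integers of arbitrarily large absolute value, so I may pick $\alpha_0 \in \F$ with $n_{\alpha_0}$ in this intersection and $|n_{\alpha_0}| > r(0)$, and then define $V_i^{(0)}$ to be any non-empty open subset of $V_i \cap T^{-p_i(n_{\alpha_0})} V_i$.

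\textbf{Inductive step.} Suppose $\alpha_0 < \cdots < \alpha_n$ and $V_i^{(0)} \supset \cdots \supset V_i^{(n)}$ with the required properties have been constructed. The new conditions for $V_i^{(n+1)}$ are (a) $V_i^{(n+1)} \subset V_i^{(n)}$ and (b) $T^{p_i(n_{\alpha_{n+1}})} T^{-(n+1)} V_i^{(n+1)} \subset V_i$; the older conditions (for $j \leq n$) follow automatically from (a) and the hypothesis on $V_i^{(n)}$, as they are monotone under inclusion. Setting $U_i := T^{-(n+1)} V_i^{(n)}$, which is non-empty and open, condition (b) reduces to choosing $V_i^{(n+1)}$ as a non-empty open subset of $V_i^{(n)} \cap T^{\,n+1 - p_i(n_{\alpha_{n+1}})} V_i$, which is possible exactly when $n_{\alpha_{n+1}} \in \bigcap_{i=1}^d N(p_i, U_i, V_i)$. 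This last intersection is IP$^*$ by the hypothesis of $\{p_1,\dots,p_d\}$-IP$^*$-transitivity, so its intersection with the tail IP-set $\{n_\alpha : \alpha > \alpha_n\} \subset A$ is again IP and unbounded, and I pick $\alpha_{n+1} > \alpha_n$ from it also satisfying $|n_{\alpha_{n+1}}| > |n_{\alpha_n}| + r(|n_{\alpha_n}|)$.

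\textbf{Main obstacle.} The one genuinely substantive point is imposing the growth estimate and the IP$^*$-transitivity constraint simultaneously at each stage; this rests on the observation that any non-trivial IP-set in $\Z$ is unbounded in absolute value, together with the fact that the tail $\{n_\alpha : \alpha > \alpha_n\}$ of a generating IP-structure is still an IP-set, so the ordering $\alpha_{n+1} > \alpha_n$ and the size requirement can be enforced at the same time. Everything else is bookkeeping: the constraints defining $V_i^{(n)}$ are downward-closed under inclusion, so the nesting $V_i^{(n+1)} \subset V_i^{(n)}$ automatically propagates the previously imposed conditions forward to the new level.
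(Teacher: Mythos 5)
Your proof is correct and follows essentially the same route as the paper's: the base step picks $\alpha_0$ from $A \cap \bigcap_{i=1}^d N(p_i, V_i, V_i)$ and sets $V_i^{(0)} = V_i \cap T^{-p_i(n_{\alpha_0})}V_i$, and the inductive step applies $\{p_1,\dots,p_d\}$-IP$^*$-transitivity to $U_i = T^{-(n+1)}V_i^{(n)}$ and $V_i$, using the tail IP-set $\{n_\alpha : \alpha > \alpha_n\}$ to enforce both the ordering and the growth condition, exactly as in the paper. The only cosmetic difference is that you take $V_i^{(n+1)}$ to be an arbitrary non-empty open subset of $V_i^{(n)} \cap T^{\,n+1-p_i(n_{\alpha_{n+1}})}V_i$ where the paper takes the full intersection.
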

\begin{proof}
	Let $V_1, \dots, V_d$ be non-empty open subsets of $X$, $A=\{n_{\alpha}: \alpha\in \mathcal{F}\}$ be an IP-set 
	and $\{ r(n)\}_{n=0}^{\infty}$ be a subsequence of $\N$.
	Since $(X, T)$ is $\{p_1, \dots, p_d\}$-IP$^*$-transitive,
	 $\bigcap_{i=1}^dN(p_i, V_i, V_i)$ 
	is an IP$^*$-set.
    Thus there exists $\alpha_0\in \mathcal{F}$ such that $$n_{\alpha_0}\in \bigcap_{i=1}^dN(p_i, V_i, V_i)\cap A$$ and $|n_{\alpha_0}|> r(0).$ 
	That is $T^{-p_i(n_{\alpha_0})}V_i \cap V_i \neq \emptyset$ for all $i=1, \dots, d$. Put $V_i^{(0)}=T^{-p_i(n_{\alpha_0})}V_i \cap V_i $ for all $i=1, \dots, d$ to complete the base step.
	
	Now assume that for $n\ge 1$ we have found a sequence $\alpha_0 <\alpha_1< \dots< \alpha_{n-1}\in \F$ and for each $i=1, \dots, d$,
	we have non-empty open subsets $V_i \supset V_i^{(0)} \supset V_i^{(1)} \supset \dots \supset V_i^{(n-1)}$ such that
	for each $m=0, 1, \dots, n-1$ one has $|n_{\alpha_m}|>|n_{\alpha_{m-1}}|+r(|n_{\alpha_{m-1}}|)$  and 
	$$T^{p_i(n_{\alpha_j})} T^{-j}V_i^{(m)} \subset V_i, \ \ \text{for all} \ 0\le j \le m.$$
	
	For $i=1, \dots, d$, let $U_i=T^{-n}V_i^{(n-1)}$. Then $\bigcap_{i=1}^dN(p_i, U_i, V_i)$ is an IP$^*$-set 
	since $(X, T)$ is $\{p_1, \dots, p_d\}$-IP$^*$-transitive,  
	thus there exists $\alpha_n\in \mathcal{F}$ such that $\alpha_n>\alpha_{n-1}$, $ |n_{\alpha_n}|>|n_{\alpha_{n-1}}|+r(|n_{\alpha_{n-1}}|)$ , and
	$$(U_1\times \dots \times U_d ) \cap T^{-p_1(n_{\alpha_n})} \times \dots \times T^{-p_d(n_{\alpha_n})}(V_1\times \dots \times V_d)\neq \emptyset.$$
	That is
	$$T^{-p_i(\alpha_n)}V_i\cap U_i \neq \emptyset, \ \text{for all} \ i=1, \dots, d.$$	
	Then for $i=1, \dots, d$,
	$$ T^{p_i(n_{\alpha_n})}U_i \cap V_i= T^{p_i(n_{\alpha_n})} T^{-n}V_i^{(n-1)}\cap V_i \neq \emptyset. $$
	Let
	$$V_i^{(n)}=V_i^{(n-1)} \cap (T^{p_i(n_{\alpha_n})} T^{-n})^{-1}V_i.$$
	Then $V_i^{(n)} \subset V_i^{(n-1)}$ is a non-empty open set and
	$$ T^{p_i(n_{\alpha_n})} T^{-n} V_i^{(n)} \subset V_i.$$
	Since $V_i^{(n)} \subset V_i^{(n-1)} $, we have
	$$T^{p_i(n_{\alpha_j})}T^{-j}V_i^{(n)} \subset V_i,  \ \text{for all} \ 0\le j\le n.$$
	Hence we finish the induction. The proof is completed. 
	
\end{proof}

\begin{lem}\label{equivalent:dense-delta transitive}
	Let $(X, T)$ be a dynamical system, $d\in \mathbb{N}$, $p_1, \dots, p_d: \mathbb{Z} \to \mathbb{Z}$ and let $A$ be a subset of $\Z$.
	Then the following assertions are equivalent.
	\begin{enumerate}
		\item \label{Lemma:1open sets}
		   If $U, V_1, \dots, V_d \subset X$ are non-empty open sets, then there exists $n\in A$ such that
		   $$U \cap T^{-p_1(n)}V_1 \cap \dots \cap T^{-p_d(n)}V_d \neq \emptyset.$$
		\item \label{Lemma:2dense}
		   There exists a dense $G_{\delta}$ subset $X_0\subset X$ such that for every $x\in X_0$,
		   $$\{(T^{p_1(n)}x, \dots, T^{p_d(n)}x): n\in A\}$$
		   is dense in $X^d$.
	\end{enumerate}	
\end{lem}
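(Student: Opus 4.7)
The plan is to use the Baire category theorem together with a countable basis of $X^d$. The equivalence is a standard ``transitivity point'' argument, with the two directions handled separately.

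For the direction \eqref{Lemma:2dense} $\Rightarrow$ \eqref{Lemma:1open sets}: given non-empty open sets $U, V_1,\dots,V_d$, I would pick any $x\in X_0\cap U$, which is non-empty because $X_0$ is dense. By hypothesis the set $\{(T^{p_1(n)}x,\dots,T^{p_d(n)}x):n\in A\}$ is dense in $X^d$, so it meets the non-empty open product $V_1\times\cdots\times V_d$. Any witnessing $n\in A$ then gives $x\in U\cap T^{-p_1(n)}V_1\cap\cdots\cap T^{-p_d(n)}V_d$, which is therefore non-empty.

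For the direction \eqref{Lemma:1open sets} $\Rightarrow$ \eqref{Lemma:2dense}: since $X$ is a compact metric space, $X^d$ has a countable basis of non-empty open sets of product form, say $\{W_1^{(k)}\times\cdots\times W_d^{(k)}\}_{k\in\N}$. For each $k$ define
$$O_k=\bigcup_{n\in A}\bigl(T^{-p_1(n)}W_1^{(k)}\cap\cdots\cap T^{-p_d(n)}W_d^{(k)}\bigr),$$
which is open as a union of open sets. Applying hypothesis \eqref{Lemma:1open sets} with $V_i=W_i^{(k)}$ and $U$ an arbitrary non-empty open subset of $X$ shows $O_k\cap U\neq\emptyset$, so each $O_k$ is dense in $X$. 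By the Baire category theorem, $X_0=\bigcap_{k\in\N}O_k$ is a dense $G_\delta$ subset of $X$. For any $x\in X_0$ and any non-empty open $W\subset X^d$, pick $k$ with $W_1^{(k)}\times\cdots\times W_d^{(k)}\subset W$; then $x\in O_k$ gives some $n\in A$ with $(T^{p_1(n)}x,\dots,T^{p_d(n)}x)\in W_1^{(k)}\times\cdots\times W_d^{(k)}\subset W$, proving the required density.

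There is no real obstacle here; the proof is essentially a bookkeeping exercise with Baire category. The only mildly subtle point is choosing a countable basis of $X^d$ consisting of products of open sets in $X$ so that the hypothesis \eqref{Lemma:1open sets}, which is phrased for individual factors rather than arbitrary open sets of $X^d$, applies directly to each basic set. This is automatic because $X$ is second countable.
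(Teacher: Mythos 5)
Your proof is correct and follows essentially the same route as the paper: the $(\ref{Lemma:2dense})\Rightarrow(\ref{Lemma:1open sets})$ direction is identical, and for the converse the paper likewise forms the open dense sets $\bigcup_{n\in A}\bigcap_{i=1}^{d}T^{-p_i(n)}B_{k_i}$ over a countable base of product sets of $X^d$ (indexed by tuples $(k_1,\dots,k_d)$ rather than a single index $k$) and applies the Baire category theorem. No gaps.
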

\begin{proof}
		The proof is similar to the proof in  \cite{Moothathu2010}. For completeness, we include a proof.
	
	$(\ref{Lemma:1open sets}) \Rightarrow (\ref{Lemma:2dense})$:
	Consider a countable base of open balls
	$\{B_k: k\in \mathbb{N}\}$ of $X$.
	Put
	$$X_0=\bigcap_{(k_1, \dots, k_d)\in \mathbb{N}^d} \bigcup_{n\in A} \bigcap_{i=1}^{d}T^{-p_i(n)}B_{k_i}.$$
	For any $(k_1, \dots, k_d)\in \mathbb{N}^d$,
	the set $\cup_{n\in A} \cap_{i=1 }^{d}T^{-p_i(n)}B_{k_i}$ is open, and is dense by $(\ref{Lemma:1open sets})$.
	Thus by the Baire category theorem, $X_0$ is a dense $G_{\delta}$ subset of $X$.
	By construction, for every $x\in X_0$, $$\{ (T^{p_1(n)}x, \dots, T^{p_d(n)}x): n\in A\}$$ is dense in $X^d$.
	
	$(\ref{Lemma:2dense}) \Rightarrow (\ref{Lemma:1open sets})$: Choose $x\in X_0\cap U$ and $n\in A$ such that
	$$(T^{p_1(n)}x, \dots, T^{p_d(n)}x) \in V_1\times \dots \times V _d, $$
	then $x\in U\cap T^{-p_1(n)}V_1\cap \dots \cap T^{-p_d(n)}V_d $.
\end{proof}

\subsection{Generalized polynomials}\
\medskip

In \cite{HSY2016}, Huang, Shao and Ye introduced the notions of $GP_d$ and $\mathcal{F}_{GP_d}$, 
we now recall their definitions.

\begin{de}
	Let $d\in \mathbb{N},$  the \emph{generalized polynomials} of degree $\leq d$ (denoted by $GP_d$) is defined as follows.   	For $d=1$, $GP_1$ is the smallest collection of functions from $\mathbb{Z}$ to $\mathbb{R}$
	containing $\{h_a: a\in \mathbb{R} \} $ with $h_a(n)=an$ for each $n\in \mathbb{Z}$, which is closed under taking $\left\lceil {\cdot } \right\rceil$, multiplying by a constant and finite sums.
	
	Assume that $GP_i$ is defined for $i<d$. Then $GP_d$ is the smallest collection of functions from
	$\mathbb{Z}$ to $\mathbb{R}$ containing $GP_i$ with $i<d$, functions of the forms
	$$a_0n^{p_0}\left\lceil {f_1(n) } \right\rceil \dots \left\lceil {f_k(n)} \right\rceil $$
	(with $a_0\in \mathbb{R}, p_0 \ge 0, k\ge 0, f_l\in GP_{p_l}, 1\le l \le k$ and $\sum_{l=0}^{k}p_l=d$),
	which is closed under taking $\left\lceil {\cdot } \right\rceil$, multiplying by a constant and finite sums.
	Let $GP=\bigcup_{i=1}^{\infty}GP_i$. Note that if $p\in GP$, then $p(0)=0$.
\end{de}
\begin{de}
	Let $\mathcal{F}_{GP_d}$ be the family generated by the sets of the form
	$$\bigcap_{i=1}^{k}\{n\in \mathbb{Z}: p_i(n) \ ( \text{mod}\ \mathbb{Z})\ \in (-\varepsilon_i, \varepsilon_i)\},$$
	where $k\in \mathbb{N}$, $p_i\in GP_d$, and $\varepsilon_i>0, 1\le i\le k$.
	Note that $ p_i(n) \ ( \text{mod} \ \mathbb{Z}) \in (-\varepsilon_i, \varepsilon_i)$
	if and only if $\{p_i(n)\} \in (-\varepsilon_i, \varepsilon_i)$.

\end{de}
\begin{rem}\label{rem-filer}
	$\mathcal{F}_{GP_d}$ is a filter, i.e., $A_1, A_2\in \mathcal{F}_{GP_d}$ implies that $A_1\cap A_2 \in \mathcal{F}_{GP_d}$.
\end{rem}

A subset $A\subset \mathbb{Z}$ is a \emph{Nil$_d$ Bohr$_0$-set} if there exists a $d$-step nilsystem $(X, T)$, $x_0\in X$ and an open set $U\subset X$ containing $x_0$ such that
$N(x_0, U):= \{n\in \mathbb{Z}: T^nx_0 \in U\}$ is contained in $A$.
Denote by $\mathcal{F}_{d, 0}$ the family consisting of all Nil$_d$ Bohr$_0$-sets.  
A subset $A\subset \mathbb{Z}$ is called \emph{Nil Bohr$_0$-set} if $A\in \mathcal{F}_{d, 0}$ for some $d\in \mathbb{N}$.    	
In \cite{HSY2016}, the authors proved the following theorem.
\begin{thm}\cite[Theorem B]{HSY2016}\label{Bohr}
	Let $d\in \mathbb{N}$. Then $ \mathcal{F}_{d, 0}=\mathcal{F}_{GP_d}$.
\end{thm}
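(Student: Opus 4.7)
The statement is cited from Huang--Shao--Ye, so the plan is to reconstruct the strategy behind their argument. The proof naturally splits into two inclusions, and the overall strategy is induction on the degree $d$, with the class $d=1$ serving as the base: there $GP_1$ is generated by $\lceil an\rceil$, and the sets $\{n:\{an\}\in(-\varepsilon,\varepsilon)\}$ are exactly the standard Bohr neighborhoods coming from rotations on tori, which are the $1$-step nilsystems. Closure properties of $\mathcal{F}_{GP_d}$ (filter, as in Remark \ref{rem-filer}) match closure properties of Nil$_d$ Bohr$_0$-sets under finite intersection (take products of nilsystems), so throughout one only needs to handle generators.

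For the inclusion $\mathcal{F}_{GP_d}\subseteq \mathcal{F}_{d,0}$, the plan is to show that for each $p\in GP_d$ and each $\varepsilon>0$ the set $\{n:\{p(n)\}\in(-\varepsilon,\varepsilon)\}$ is a return-time set on some $d$-step nilsystem. I would induct on the recursive construction of $GP_d$: for a generator of the form $a_0 n^{p_0}\lceil f_1(n)\rceil\cdots\lceil f_k(n)\rceil$ with $f_l\in GP_{p_l}$ and $\sum p_l=d$, I assume inductively that each $f_l$ is realized modulo $1$ as a coordinate of the orbit on a $p_l$-step nilsystem. Then I build a $d$-step nilmanifold as a principal circle extension (respectively tower of extensions) of the fiber product of these lower-step nilsystems, designed so that its top coordinate along the orbit of a chosen element realizes $p \pmod 1$. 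The ceiling operation $\lceil\cdot\rceil$ is harmless because working modulo $1$ annihilates it, and finite sums are absorbed by fiber products.

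For the converse $\mathcal{F}_{d,0}\subseteq\mathcal{F}_{GP_d}$, the plan is to fix a $d$-step nilsystem $X=G/\Gamma$, a base point $x_0$ and a neighborhood $U\ni x_0$, and to exhibit Malcev coordinates $(t_1,\dots,t_m)$ on $X$ adapted to the lower central series of $G$. Writing $T^n x_0=g^n x_0$ in these coordinates and reducing to the fundamental domain relative to $\Gamma$, the coordinate functions $n\mapsto t_j(g^n x_0)$ become generalized polynomials of degree $\le d$: the reduction modulo $\Gamma$ introduces precisely the ceiling corrections built into the definition of $GP_d$, and the bracket relations in $G$ (which, by Leibman's polynomial mapping theorem, give ordinary polynomial coordinate expressions before reduction) control the degree. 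Shrinking $U$ to a product of coordinate slabs expresses $N(x_0,U)$ as a finite intersection of sets $\{n:\{p_i(n)\}\in(-\varepsilon_i,\varepsilon_i)\}$ with $p_i\in GP_d$.

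The main obstacle is the second inclusion, specifically verifying that the ceiling corrections produced by $\Gamma$-reduction in a Malcev chart stay inside $GP_d$ of the \emph{correct} degree rather than leaking into higher degree. This requires choosing the Malcev basis to respect the filtration by $[G,G_{j}]$ so that each coordinate picks up only brackets of weight matching its position, and then a careful induction on nilpotency class showing that the fractional-part corrections at level $j$ are themselves generalized polynomials of degree $\le j$. Once this bookkeeping is set up, both inclusions close up cleanly against the recursive definition of $GP_d$, and the theorem follows.
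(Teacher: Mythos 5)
There is nothing in this paper to compare your proposal against: Theorem \ref{Bohr} is imported verbatim from \cite[Theorem~B]{HSY2016} and the paper gives no proof of it (the result occupies a large part of that memoir). So the only question is whether your blind reconstruction would stand on its own, and as written it is a proof \emph{plan} rather than a proof. The two-inclusion architecture you describe is the right one, and both of your key ideas --- realizing generalized polynomials mod $1$ on nilsystems for $\mathcal{F}_{GP_d}\subseteq\mathcal{F}_{d,0}$, and Malcev coordinates with $\Gamma$-reduction for $\mathcal{F}_{d,0}\subseteq\mathcal{F}_{GP_d}$ --- are the ones actually used. But you have located the difficulty in the wrong place. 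The Malcev-coordinate direction, while requiring care with the degree filtration, is the more routine half: that the reduced coordinates of $g^nx_0$ are generalized polynomials of degree bounded by the nilpotency class is essentially a known consequence of Leibman's polynomial orbit machinery. The genuinely hard half is the one you dispose of in a few lines: an arbitrary element of $GP_d$ is built by nesting $\lceil\cdot\rceil$, products and sums in ways that do \emph{not} directly present themselves as coordinates of a tower of circle extensions, and the claim that ``the ceiling operation is harmless because working modulo $1$ annihilates it'' is false inside a product such as $a_0n^{p_0}\lceil f_1(n)\rceil\cdots\lceil f_k(n)\rceil$, where the integer parts multiply other terms and cannot be discarded mod $1$.

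What makes the first inclusion work in \cite{HSY2016} is precisely the normalization that the present paper quotes as Lemma \ref{lem sgp_gp}: one first shows that every $p\in\widetilde{GP_d}$ agrees, on a set of the form $\bigcap_k\{n:\{q_k(n)\}\in(-\delta,\delta)\}$, with a \emph{special} generalized polynomial built from the nested expressions $L(a_1n^{j_1},\dots,a_ln^{j_l})$, and only these normal forms are then realized as matrix entries of $g^n$ for explicit unipotent upper-triangular groups, with the $\lceil\cdot\rceil$ corrections arising exactly from reduction modulo the integer lattice $\Gamma$. Without this reduction step your induction on the recursive construction of $GP_d$ does not close up, because the generators of $GP_d$ are not in a form that a fiber product of lower-step nilsystems plus one circle extension can absorb. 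So the gap is concrete: you need (and do not supply) the reduction to special generalized polynomials, and the explicit nilpotent-group constructions that realize them.
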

\begin{rem}
	Since a nilsystem is distal, it is shown in \cite[Theorem 9.11]{F81} that every Nil$_d$ Bohr$_0$-set is an IP$^*$-set. Together with  Remark \ref{rem-filer} we know $\mathcal{F}_{GP_d}$ is a filter and any $A \in \mathcal{F}_{GP_d}$ is an IP$^*$-set.
\end{rem}

 Zhang and Zhao \cite{ZZ2021} introduced the notions of integer-valued generalized polynomials and the special integer-valued generalized polynomials.
In this paper, we adopt their relevant definitions and list some results given by Zhang and Zhao 
(see \cite{ZZ2021} for more details).

\begin{de}
	For $d\in \mathbb{N}$, {\it the integer-valued generalized polynomials} of degree $\le d$ is defined by
	$$\widetilde{GP_d}=\{\left\lceil p(n) \right\rceil: p(n)\in GP_d\},$$
	and {\it the integer-valued generalized polynomials}  is then defined by
	$$\mathcal{G}=\bigcup_{i=1}^{\infty}\widetilde{GP_i}.$$
\end{de}
For $p(n) \in \mathcal{G}$,
the least $d\in \mathbb{N}$ such that $p\in \widetilde{GP_d}$ is defined by the \emph{degree} of $p$, denoted by
$\deg(p)$. 
 
\medskip
 


Since the integer-valued generalized polynomials are very complicated, we will also
specify a subclass of the integer-valued generalized polynomials, i.e. {\it the special integer-valued generalized polynomials} (denoted by $\widetilde{SGP}$),
which will be used in the proof of our main results. 

We  need to recall the defintion of $L(a_1,a_2,\dots,a_l)$ in Defintion 4.2 of  \cite{HSY2016}. For $a \in \mathbb{R}$, we define $L(a)=a$. For $a_1,a_2 \in \mathbb{R}$, we define $L(a_1,a_2)=a_1\left\lceil L(a_2)\right\rceil$. 
 Inductively, for $a_1,a_2, \dots, a_l \in \mathbb{R}(l \ge 2)$,
$$L(a_1,a_2,\dots,a_l)=a_1\left\lceil L(a_2,\dots,a_l)\right\rceil.$$ 

Before introducing the definition of $\widetilde{SGP}$,
we recall the notion of the simple generalized polynomials.

\begin{de} 	
	For $d \in \mathbb{N}$, the {\it simple generalized polynomials} of degree $ \le d$ \  (denoted by $\widehat{SGP_d}$\ ) is defined as follows.
	$\widehat{SGP_d}$ is the smallest collection  of generalized polynomials of the forms
	$$\prod_{i=1}^{k}L(a_{i,1}n^{j_{i,1}},\dots, a_{i,l_{i}}n^{j_{i,l_i}}),$$
	where $k \ge 1$, $1 \le l_i \le d$,  $a_{i,1},a_{i,2},\dots,a_{i,l_{i}} \in \mathbb{R}$, $j_{i,1},j_{i,2},\dots, j_{i,l_i} \ge 0$ and $\sum_{i=1}^{k}\sum_{t=1}^{l_i}j_{i,t} \le d$.
\end{de}


\begin{de} 	For $d \in \mathbb{N}$, the {\it special integer-valued generalized polynomials} of degree $ \le d$ (denoted by $\widetilde{SGP_d}$) is defined as follows.
	$$\widetilde{SGP_d}=\{\sum_{i=1}^k c_i\left\lceil p_i(n)\right\rceil: p_i(n) \in \widehat{SGP_d} \text{ and } c_i \in \mathbb{Z}\}.$$
	The  {\it special integer-valued generalized polynomials}  is then defined by
	$$\widetilde{SGP}=\bigcup_{d=1}^{\infty} \widetilde{SGP_d}.$$
\end{de}
Clearly $\widetilde{SGP} \subset \mathcal{G}$.

The following lemma shows the relationship 
between $\widetilde{GP_d}$ and $\widetilde{SGP_d}$.

\begin{lem}\label{lem sgp_gp}\cite[Lemma 2.12]{ZZ2021}
	Let $d \in \mathbb{N}$ and $p(n) \in \widetilde{GP_d}$. Then there exists $h(n)\in \widetilde{SGP_d}$ and a set
	$$C=C(\delta,q_1,\dots,q_t)=\bigcap_{k=1}^{t} \{n\in  \mathbb{Z}: \{q_k(n)\}\in (-\delta,\delta)\}$$
	such that $$p(n)=h(n), \forall n \in C,$$ where $\delta>0$ is small enough and
	$q_k \in \widehat{SGP_d}, k=1,\dots, t$ for some $t \in \mathbb{N}$.
	
\end{lem}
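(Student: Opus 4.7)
My plan is to proceed by structural induction on the construction of $p$ as an element of $GP_d$, using Lemma \ref{finite sum}(1) as the only real tool: it lets us distribute an outer ceiling over a sum precisely when the summands' fractional parts are simultaneously small, and the set $C$ is there to enforce exactly that. The base case is immediate: for $p = h_a$, $\lel p(n)\rr = \lel an\rr = \lel L(an)\rr \in \widetilde{SGP_d}$ already, with $C = \Z$.

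In the inductive step, I would prove, alongside the main statement, a companion claim that the ``pre-ceiling'' polynomial $p \in GP_d$ itself agrees, on a set of the required form, with a finite $\R$-linear combination of elements of $\widehat{SGP_d}$. Granting this: sums combine the two expansions, scalar multiples rescale the coefficients, and the product case $p = a_0 n^{p_0}\lel f_1\rr\cdots \lel f_k\rr$ with $f_l\in GP_{p_l}$ and $\sum_{l=0}^{k} p_l = d$ is handled by first replacing each $\lel f_l\rr$ with its $\widetilde{SGP_{p_l}}$-representative and then multiplying out: each resulting summand $a_0 n^{p_0}\prod_l \lel g_{l,i_l}(n)\rr$ with $g_{l,i_l}\in \widehat{SGP_{p_l}}$ is, by the total weight $\sum p_l = d$, an element of $\widehat{SGP_d}$. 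The crux is the ceiling case $p = \lel q\rr$: the companion statement gives $q(n) = \sum_j c'_j p'_j(n)$ on some $C'$ with $p'_j \in \widehat{SGP_d}$ and $c'_j \in \R$, so on $C' \cap \bigcap_j \{n : \{c'_j p'_j(n)\} \in (-\delta, \delta)\}$ with $\delta$ small enough, Lemma \ref{finite sum}(1) yields $\lel q(n)\rr = \sum_j \lel c'_j p'_j(n)\rr$; since each $c'_j p'_j \in \widehat{SGP_d}$ by the closure of $\widehat{SGP_d}$ under real scalar multiplication, this exhibits $\lel q\rr$ as an element of $\widetilde{SGP_d}$, and the auxiliary polynomials defining the restricted set are the $c'_j p'_j \in \widehat{SGP_d}$ themselves, as required.

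The main obstacle is the structural bookkeeping. One must verify at each step that the defining set $C$ uses only $\widehat{SGP_d}$-conditions (not arbitrary $GP_d$-conditions), and that the degree is preserved rather than increased when combining pieces. The crucial algebraic observation enabling the latter is the identity $\lel L(a_1 n^{j_1}, \dots, a_l n^{j_l})\rr = L(1, a_1 n^{j_1}, \dots, a_l n^{j_l})$, which reabsorbs a stray outer ceiling into the $L$-nesting without inflating the total exponent weight $\sum j_i$, so we stay inside $\widehat{SGP_d}$ throughout. Finally, $\delta$ is chosen uniformly small at the end so that every one of the finitely many applications of Lemma \ref{finite sum}(1) along the induction has its fractional-part-sum hypothesis $\sum\{r_i\} \in (-\tfrac12,\tfrac12]$ satisfied simultaneously.
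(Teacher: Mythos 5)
This lemma is not proved in the paper at all: it is imported verbatim as \cite[Lemma 2.12]{ZZ2021}, so there is no in-paper argument to compare against, and your attempt has to stand on its own. Your overall strategy --- structural induction on the grammar of $GP_d$, using Lemma \ref{finite sum}(1) to distribute an outer ceiling over a sum on a set of the form $\bigcap_k\{n:\{q_k(n)\}\in(-\delta,\delta)\}$, and reabsorbing a stray ceiling of a single $L$-nesting via $\lceil L(a_1n^{j_1},\dots,a_ln^{j_l})\rceil=L(n^0,a_1n^{j_1},\dots,a_ln^{j_l})$ --- is the right general idea and handles the base case, sums, scalar multiples, and the ceiling step correctly.

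The gap is in the product case, and it is exactly the point your ``structural bookkeeping'' remark glosses over. When you replace $\lceil f_l\rceil$ by its $\widetilde{SGP_{p_l}}$-representative $\sum_i c_{l,i}\lceil g_{l,i}\rceil$, the generators $g_{l,i}\in\widehat{SGP_{p_l}}$ are in general \emph{products of several} $L$-nestings, not single ones; your reabsorption identity only applies to single nestings. Concretely, take $f_1(n)=an\lceil bn\rceil\lceil cn\rceil\in GP_3$ (with $a$ irrational) and $f_2(n)=en\in GP_1$, so that $p(n)=\lceil f_1(n)\rceil\cdot\lceil f_2(n)\rceil\in\widetilde{GP_4}$. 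Here $\lceil f_1\rceil=\lceil L(an,bn)\,L(n^0,cn)\rceil$ is the ceiling of a genuine two-factor product, and the summand your argument produces, $\lceil L(an,bn)L(n^0,cn)\rceil\cdot L(n^0,en)$, is not of the form $\prod_i L(\cdots)$, hence not an element of $\widehat{SGP_4}$; nor can $\lceil f_1\rceil$ be traded for a combination of ceilings of single $L$-nestings, since e.g.\ $an\lceil bn\rceil\lceil cn\rceil$ and $an\lceil bn\lceil cn\rceil\rceil$ differ by an unbounded quantity of order $n$. Writing $\lceil g\rceil=g-\{g\}$ does not repair this either: on a set where $\{g\}$ is small the error term $\{g\}\lceil en\rceil$ is still unbounded, so the ceiling cannot simply be dropped. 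Handling products of integer-valued generalized polynomials whose representatives are ceilings of multi-factor products is the actual content of \cite[Lemma 2.12]{ZZ2021}, and your sketch does not contain the idea needed to do it.
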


\medskip

By Lemma \ref{lem sgp_gp}, every $p(n) \in \widetilde{GP_d}$ corresponds to an $h(n) \in \widetilde{SGP_d},$ the maximal-degree components of $p(n)\in \widetilde{GP_d}$ is denoted by the maximal-degree components of the corresponding  $h(n)\in \widetilde{SGP_d}.$

\begin{de}\label{def-nondegenerate}	
	Let $p(n) \in \mathcal{G}$ and $A(p(n))$ be the sum of the coefficients of the maximal-degree components of $p(n)$.  
	Let $p_1, \dots, p_d \in  \mathcal{G}$, a tuple $(p_1, \dots,p_d)$ is called a
	{\it non-degenerate tuple} if $A(p_i) \neq 0$ and $A(p_i-p_j) \neq 0$, $1 \le i\neq j \le d$. 
\end{de}

For instance, $A(\left\lceil an^2 \left \lceil bn\right\rceil +\left\lceil cn^3\right\rceil \right\rceil +dn^3 +2n^2)=ab+c+d,$ $A(n+n\left \lceil 2\pi n -\left\lceil 2\pi n\right\rceil \right\rceil)=2\pi -2\pi=0,$  $(n^2+n,n^2+\left\lceil \sqrt{3}n\right\rceil)$ is non-degenerate, $(n\left\lceil 2\pi n \right\rceil+n, \left\lceil 2\pi n^2 \right\rceil+2n)$ is not non-degenerate.

\medskip


\begin{de}\label{def proper}
	Let $p(n)\in \widetilde{SGP}$, $m \in \mathbb{Z}$ and $C \subset \mathbb{Z}$. We say that $p$ is \emph{proper} with respect to (w.r.t. for short) $m$ and $C$ if
	for every $n\in C$,
	\begin{itemize}
		\item if $\deg(p)=1$, $p(n+m)=p(n)+p(m)$.
		\item if $\deg(p)>1$, 	$p(n+m)-p(n)-p(m)=q(n),$
		where $q(n)\in \widetilde{SGP}$ and $\deg(q) <\deg(p)$.
	\end{itemize}
\end{de}


\begin{de}\label{def-m-good}
	Let $p(n) \in \widetilde{SGP}$ and $m\in \mathbb{Z}$. 
	\begin{itemize}
		\item If $p(n)=\left\lceil L(a_1n^{j_1},\dots, a_ln^{j_l}) \right \rceil$, we say $m$ is {\it good} w.r.t. $p(n)$ if for any $1\le t \le l$,  $\{L(a_{t}m^{j_{t}},a_{t+1}m^{j_{t+1}},\dots,a_lm^{j_l})\} \neq \frac{1}{2} $.
		\item If $p(n)=\left\lceil \prod_{i=1}^{k}r_{i}(n) \right\rceil$
		with $r_i(n)=L(a_{i,1}n^{j_{i,1}},\dots, a_{i,l_i}n^{j_{i,l_i}})$, we say $m$ is {\it good} w.r.t. $p(n)$ if	
		$\{\prod_{i=1}^{k}r_i(m)\} \neq \frac{1}{2}$ 	
		and  $m$ is
		{\it good} w.r.t. $\left\lceil r_i(n) \right \rceil$ for each $1 \le i \le k$.
		\item If $p(n)=\sum_{t=1}^{k}c_t\left\lceil q_t(n) \right\rceil$ with $c_t \in\mathbb{Z}$ and each $q_t(n)$ is of the form $\prod_{i=1}^{k}r_{i}(n)$ with $r_i(n)=L(a_{i,1}n^{j_{i,1}},\dots, a_{i,l_i}n^{j_{i,l_i}})$, we say $m$ is good
		w.r.t. $p(n)$ if $m$ is {\it good} w.r.t. $\left\lceil q_t(n)\right\rceil$ for each $1 \le t \le k$. 
		
	\end{itemize}
\end{de}

The following  lemmas  are very useful in our proof. 
\begin{lem}\cite[Lemma 2.16]{ZZ2021}\label{lem-goodm}
	Let $p(n) \in \widetilde{SGP}$. Then there exist $\delta>0$, $Q\subset \widehat{SGP_s}$ (for some $s\in \mathbb{N}$) and
	$$C(\delta,Q)=\bigcap_{q(n)\in Q}\{n\in \mathbb{Z}: \{q(n)\} \in (-\delta,\delta)\}$$
	such that for each $m\in C(\delta,Q)$, $m$ is good w.r.t. $p(n)$.
\end{lem}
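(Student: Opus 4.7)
The plan is to induct on the structural complexity of $p(n)\in\widetilde{SGP}$, mirroring the three-clause recursive definition of ``good'' (Definition \ref{def-m-good}). The fundamental observation driving the whole argument is that if $\delta<1/2$, then any constraint of the form $\{q(m)\}\in(-\delta,\delta)$ automatically forces $\{q(m)\}\neq 1/2$. Hence, to guarantee goodness it will suffice to assemble into $Q$ every auxiliary $\widehat{SGP}$-function whose fractional part at $m$ must avoid $1/2$, and then choose $\delta$ small (e.g.\ $\delta=1/4$).

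First I would handle the base clause, where $p(n)=\lel L(a_1 n^{j_1},\dots,a_l n^{j_l})\rr$. For each $1\le t\le l$ the tail expression
$$q_t(n):=L(a_t n^{j_t},a_{t+1}n^{j_{t+1}},\dots,a_l n^{j_l})$$
is, by construction, itself a singleton product of an $L$-expression and therefore lies in $\widehat{SGP_d}$ with $d\le \sum_{i=1}^{l}j_i$. Setting $Q_p:=\{q_1,\dots,q_l\}$ and $\delta_p:=1/4$, any $m\in C(\delta_p,Q_p)$ satisfies $\{q_t(m)\}\in(-1/4,1/4)$, hence $\{q_t(m)\}\neq 1/2$ for every $t$, which is precisely the goodness condition in the first bullet of Definition \ref{def-m-good}.

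Next I would pass to the product clause: for $p(n)=\lel\prod_{i=1}^k r_i(n)\rr$ with $r_i(n)=L(a_{i,1}n^{j_{i,1}},\dots,a_{i,l_i}n^{j_{i,l_i}})$, I apply the base step to each $\lel r_i(n)\rr$ to obtain $(\delta_i,Q_i)$ enforcing goodness w.r.t.\ $\lel r_i(n)\rr$, and I additionally adjoin the full product $\prod_{i=1}^k r_i(n)$, which itself lies in $\widehat{SGP_s}$ for $s=\sum_{i,t}j_{i,t}$, to take care of the extra requirement $\{\prod r_i(m)\}\neq 1/2$. Taking $Q_p=\bigcup_i Q_i\cup\{\prod_i r_i\}$ and $\delta_p=\min_i\delta_i\wedge 1/4$ does the job. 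The sum clause $p(n)=\sum_t c_t\lel q_t(n)\rr$ is then immediate: apply the product case to each $\lel q_t(n)\rr$ to get $(\delta_t,Q_t)$, and set $Q_p=\bigcup_t Q_t$, $\delta_p=\min_t \delta_t$; this yields $C(\delta_p,Q_p)\subset\bigcap_t C(\delta_t,Q_t)$, so every $m$ in this intersection is good w.r.t.\ each $\lel q_t(n)\rr$ and hence w.r.t.\ $p(n)$.

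There is no real obstacle here beyond bookkeeping: the finitely many auxiliary functions collected along the way are each visibly in some $\widehat{SGP_s}$ (with $s$ bounded by the degree of $p$), and the filter property of generalized polynomial families ensures that the finite intersection of the corresponding $C(\delta,\cdot)$-sets is again of the same form. The only point that deserves a bit of care is verifying at each induction step that the functions I put into $Q$ (the tails $L(a_t n^{j_t},\dots,a_l n^{j_l})$ and the full products $\prod_i r_i$) genuinely belong to $\widehat{SGP}$, but this follows directly from the generating scheme for $\widehat{SGP_d}$ since both are simply products of $L$-expressions with total $n$-degree bounded by $\deg(p)$.
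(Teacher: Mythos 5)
Your proof is correct and, as far as one can tell, is essentially the intended argument: the paper itself gives no proof of this lemma (it is imported verbatim as \cite[Lemma 2.16]{ZZ2021}), but the statement reduces exactly to the observation you make, namely that goodness of $m$ w.r.t.\ $p$ only asks that the finitely many fractional parts $\{L(a_t m^{j_t},\dots,a_l m^{j_l})\}$ and $\{\prod_i r_i(m)\}$ arising in Definition \ref{def-m-good} avoid $\tfrac12$, and each such expression is itself a product of $L$-expressions, hence lies in $\widehat{SGP_s}$ with $s\le\deg(p)$, so collecting them into $Q$ and taking $\delta\le\tfrac14$ pins every one of these fractional parts into $(-\tfrac14,\tfrac14)$, away from $\tfrac12$. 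The only point worth adding is that the resulting $C(\delta,Q)$ is automatically nonvoid (indeed an IP$^*$-set) by Theorem \ref{Bohr}, though the lemma as stated does not require this.
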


\begin{lem}\cite[Lemma 2.19]{ZZ2021}\label{lem-proper}
	Let $p_1, \dots, p_d\in \widetilde{SGP}$. Let $l\in \mathbb{N}, m_j\in \mathbb{Z}$ and $m_j$ is good w.r.t. $p_i(n)$ for $1\le i \le d, 1 \le j \le l$.  
	Then there exists a Nil$_s$ Boh$_0$-set $C$
	with the form
	$$C=\bigcap_{k=1}^{t}\{n\in \mathbb{Z}: \{q_k(n)\}\in (-\delta,\delta)\}$$
	such that for all $(i, j)\in \{1, \dots, d\} \times \{1, \dots, l\}$, $p_i(n)$ is proper w.r.t. $m_j$ and $C$,	where  $\delta>0$ is a small enough number,
	$s = \mathop {\max }_{1\le i \le d }\deg (p_i)$ and	$q_k \in \widehat{SGP_s}, k=1,\dots,t$ for some $t\in \mathbb{N}$.
\end{lem}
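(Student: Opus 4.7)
The plan is to produce the constraint set for each pair $(p_i, m_j)$ separately and then intersect them. Since the intersection of finitely many sets of the advertised form is again of the same form (by Remark \ref{rem-filer} together with $\widehat{SGP_s}\subset GP_s$ and Theorem \ref{Bohr}), it suffices to fix a single $p\in \widetilde{SGP}$ of degree at most $s$ and a single $m$ good w.r.t.\ $p$, and construct a set $C_{p,m}$ of the required form on which $p$ is proper w.r.t.\ $m$.

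Proceed by induction on $\deg(p)$. In the base case $\deg(p)=1$, write $p(n)=\sum_k c_k\lceil \alpha_k n\rceil$ with $c_k\in \Z$ and $\alpha_k\in \R$. Since $m$ is good w.r.t.\ $p$, one has $\{\alpha_k m\}\neq \tfrac{1}{2}$ for every $k$; choose $\delta>0$ smaller than $\min_k(\tfrac{1}{2}-|\{\alpha_k m\}|)$. On $C:=\bigcap_k\{n:\{\alpha_k n\}\in(-\delta,\delta)\}$, Lemma \ref{finite sum}(1) yields $\lceil \alpha_k(n+m)\rceil=\lceil \alpha_k n\rceil+\lceil \alpha_k m\rceil$ for each $k$, and summation gives $p(n+m)=p(n)+p(m)$.

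In the inductive step $\deg(p)=s>1$, reduce by $\Z$-linearity to $p(n)=\lceil q(n)\rceil$ with $q(n)=\prod_{i=1}^k r_i(n)$ and $r_i(n)=L(a_{i,1}n^{j_{i,1}},\dots,a_{i,l_i}n^{j_{i,l_i}})$. The computation of $p(n+m)$ proceeds in two stages. First, expand each $r_i(n+m)$: the binomial theorem on every $(n+m)^j$, together with Lemma \ref{finite sum} and the goodness of $m$ at every interior nested ceiling, lets one distribute each inner $\lceil\cdot\rceil$ past the resulting sums, at the cost of adjoining constraints $\{n:\{Q(n)\}\in(-\delta,\delta)\}$ with $Q\in \widehat{SGP_s}$; this yields $r_i(n+m)=r_i(n)+r_i(m)+h_i(n)$ with $h_i$ of $n$-degree strictly less than $\deg(r_i)$. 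Second, multiply out $\prod_i(r_i(n)+r_i(m)+h_i(n))$: the pure-$n$ term is $q(n)$, the pure-$m$ term is $q(m)$, and every remaining cross term carries an explicit factor that lowers its $n$-degree. Applying the outer ceiling with Lemma \ref{finite sum} once more, after one further constraint, gives $p(n+m)=p(n)+p(m)+h^*(n)$ where $h^*\in \widetilde{SGP}$ has degree strictly less than $s$, as required.

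The main obstacle is the combinatorial bookkeeping of pushing ceilings through nested $L$-expressions: at every nesting level one must verify that the auxiliary polynomial $Q$ added to $C$ indeed lies in $\widehat{SGP_s}$ (so that $C$ is a finite intersection of sets of the stated form and hence a Nil$_s$ Bohr$_0$-set by Theorem \ref{Bohr}), and one must track that each cross term in the final expansion carries enough factors of $m$ or $h_i$ to drop the $n$-degree strictly below $s$. The good-at-$m$ hypothesis is invoked at every nesting level, keeping the fractional parts of the intermediate $L$-values at $m$ a fixed positive distance away from $\tfrac{1}{2}$, which is exactly what makes it possible to choose a single small $\delta$ uniformly at the end.
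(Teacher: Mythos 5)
This lemma is not proved in the paper at all: it is imported verbatim as \cite[Lemma 2.19]{ZZ2021}, so there is no in-paper argument to measure your proposal against. What you have written is a reconstruction of the standard argument, and its skeleton is sound: the reduction to a single pair $(p,m)$ is legitimate because a finite intersection of constraint sets of the stated form is again of that form (take the union of the $q_k$'s and the minimum $\delta$) and properness is inherited by subsets of $C$; and in degree one, goodness gives $\{\alpha_k m\}\in(-\tfrac12,\tfrac12)$, so Lemma \ref{finite sum}(1) splits each ceiling once $\{\alpha_k n\}$ is forced small.

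Two places need more care than your sketch provides, and they are exactly where the content of the ZZ2021 proof lives. First, your base case only treats $p(n)=\sum_k c_k\lceil\alpha_k n\rceil$, but degree-one elements of $\widetilde{SGP_1}$ already carry nested ceilings such as $\lceil b\lceil\alpha n\rceil\rceil=\lceil L(b,\alpha n)\rceil$ (this is the form actually used in Theorem \ref{Main theorem: degree1+d=1}), so even the base case requires the level-by-level splitting you defer to the inductive step, with the goodness hypothesis $\{L(a_t m^{j_t},\dots)\}\neq\tfrac12$ invoked at each level. Second, in the inductive step, after writing $q(n+m)=q(n)+q(m)+g(n)$ you cannot split the outer ceiling by Lemma \ref{finite sum} until the fractional part of the entire sum is controlled; since $g$ is a sum of many cross terms $g_k$, you must impose $\{g_k(n)\}\in(-\delta,\delta)$ for each summand separately, verify that each $g_k$ (e.g.\ $am\lceil bn\rceil=L(am,bn)$ or $a\lceil bm\rceil n$) lies in $\widehat{SGP_s}$ so that these constraints are of the admissible form, and only then conclude $\lceil g(n)\rceil=\sum_k\lceil g_k(n)\rceil\in\widetilde{SGP_{s-1}}$, giving the required derivative of lower degree. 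Neither point is a fatal gap, but both must be carried out explicitly for the degree bound $q_k\in\widehat{SGP_s}$ in the statement to be justified.
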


The Nil$_s$ Bohr$_0$-set $C$ above is called \emph{associated} to $\{p_1, \dots, p_d\}$ and $\{m_1, \dots, m_l\}$.


\section{Proof of Theorem \ref{thm general} for degree $1$ integer-valued generalized polynomials}\label{linear case}

In this section, we will prove Theorem \ref{thm general} for degree $1$ integer-valued generalized polynomials. 

\medskip

The proof of the next lemma  is identical with the proof of  Lemma \ref{lemma:spectra of IP-set is IP 1}, and we so omit it.

\begin{lem}\label{lemma:spectra of IP-set is IP}
	Let $A$ be an IP-set and $p\in \widetilde{SGP_1}$ with $A(p(n)) \neq 0$.
	Then 
	$$\{p(n) \in \mathbb{Z}: n\in A \}$$ 
	is also an IP-set. 
\end{lem}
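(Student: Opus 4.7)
The statement generalizes Lemma \ref{lemma:spectra of IP-set is IP 1} from its explicit two-level form to arbitrary $p\in\widetilde{SGP_1}$, and the plan is to reuse the earlier proof's template. First I would unpack the definition: write $p(n)=\sum_{i=1}^{k}c_i\lceil p_i(n)\rceil$ with $c_i\in\Z$ and $p_i\in\widehat{SGP_1}$. Since $\deg(p_i)\le 1$, each $p_i(n)$ is a product of $L$-expressions in which exactly one innermost argument depends linearly on $n$ and every other argument is a real constant, so up to a real constant factor $\lceil p_i(n)\rceil$ becomes a finite nested tower $\lceil a_{i,l_i}\lceil a_{i,l_i-1}\lceil\cdots\lceil\alpha_i n\rceil\cdots\rceil\rceil\rceil$. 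I would enumerate all intermediate sub-expressions $g_{i,j}(n)$ obtained by peeling these ceilings off one at a time.

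Next I would invoke an iterated version of Lemma \ref{lemma:partition regular} with $\mathcal{B}=\mathcal{F}_{IP}$ and $\epsilon=1/4$, simultaneously controlling the fractional parts $\{\alpha_i n\}$ and every $\{a_{i,j}\lceil g_{i,j-1}(n)\rceil\}$. The $2m$-partition argument in the proof of Lemma \ref{lemma:partition regular} extends cleanly to any fixed finite nesting depth: at each layer the inductive step forces the partition class to be the one near $0$ using only the already-established ``near zero'' behaviour of the layer immediately below. The outcome is an IP-subset $B\subseteq A$ on which every relevant fractional part lies in $(-1/4,\,1/4)$. Lemma \ref{finite sum} then propagates the identity $\lceil f(x+y)\rceil=\lceil f(x)\rceil+\lceil f(y)\rceil$ through every ceiling layer, yielding $p(x+y)=p(x)+p(y)$ for all $x,y\in B$.

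To finish, I would pick a sequence $\{m_n\}$ with $FS(\{m_n\})\subseteq B$ and argue by induction on $|F|$, exactly as in the proof of Lemma \ref{lemma:spectra of IP-set is IP 1}, to obtain $p(\sum_{n\in F}m_n)=\sum_{n\in F}p(m_n)$ for every finite $F\subseteq\N$. Hence $FS(\{p(m_n)\})\subseteq\{p(n):n\in A\}$, exhibiting the target as an IP-set; the hypothesis $A(p(n))\ne 0$ guarantees that on $B$ the approximation $p(n)\approx A(p(n))\cdot n$ holds, so the witnessing sequence $\{p(m_n)\}$ does not collapse to $0$. The main obstacle is not conceptual but notational: Lemma \ref{lemma:partition regular} is written only for two-level nesting, so one must check that its $2m$-partition and additive bookkeeping iterate cleanly through arbitrarily (but finitely) many ceiling layers --- this is precisely what the author's remark ``the proof is identical'' abbreviates.
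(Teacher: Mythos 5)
Your proposal is correct and follows essentially the same route as the paper, which omits the proof with the remark that it is ``identical'' to that of Lemma \ref{lemma:spectra of IP-set is IP 1}: reduce a degree-one special integer-valued generalized polynomial to a sum of nested ceiling towers in $\alpha_i n$, use (an iterated form of) Lemma \ref{lemma:partition regular} with $\epsilon=1/4$ to pass to an IP-subset $B\subseteq A$ on which all intermediate fractional parts are small, and then propagate additivity through Lemma \ref{finite sum} by induction on $|F|$ to get $FS(\{p(a_n)\})\subseteq\{p(n):n\in A\}$. You also correctly identify the one point the paper's ``identical'' glosses over, namely that the two-level partition argument must be iterated inside-out through arbitrarily many ceiling layers, and your sketch of why that iteration goes through is sound.
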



\begin{thm}\label{Main theorem: degree1+d=1}
	Let $(X, T)$ be a topologically mildly mixing minimal system and $p\in \widetilde{SGP_1}$ with $A(p(n)) \neq 0$.
	Then for any non-empty open subsets $U, V$ of $X$,
	$$N(p, U, V)= \{ n\in \mathbb{Z}: U\cap T^{-p(n)}V \neq \emptyset \}$$
	is an IP$^*$-set.
\end{thm}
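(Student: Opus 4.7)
The plan is to combine two facts already in hand: the IP$^*$-transitivity of a topologically mildly mixing minimal system, and Lemma \ref{lemma:spectra of IP-set is IP}, which sends IP-sets to IP-sets under the map $p$. Because $(X,T)$ is topologically mildly mixing and minimal, the characterization recalled in the introduction gives $N(U',V') \in \F_{IP}^*$ for every pair of non-empty open sets $U', V' \subset X$; this is the only dynamical input we need.

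The argument I envision is a direct two-step deduction. Take an arbitrary IP-set $A$; it suffices to exhibit $N(p,U,V) \cap A \neq \emptyset$. First, apply Lemma \ref{lemma:spectra of IP-set is IP}, using the hypothesis $A(p) \neq 0$, to conclude that $B := \{p(n) : n \in A\}$ is itself an IP-set. Next, by the very definition of IP$^*$, the set $N(U,V)$ must meet $B$; pick $m \in N(U,V) \cap B$ and write $m = p(n)$ for some $n \in A$. Then
$$U \cap T^{-p(n)} V = U \cap T^{-m} V \neq \emptyset,$$
so $n \in N(p,U,V) \cap A$, as required. Since $A$ was an arbitrary IP-set, $N(p,U,V)$ is an IP$^*$-set.

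I expect the real work of this section to be concealed in Lemma \ref{lemma:spectra of IP-set is IP} rather than in Theorem \ref{Main theorem: degree1+d=1} itself. The partition-regularity argument modelled on Lemma \ref{lemma:spectra of IP-set is IP 1} must refine a prescribed IP-set so that the fractional parts of the nested expressions $\lel b_i\lel \alpha_i n\rr\rr$ all stay within $(-\tfrac14,\tfrac14)$; then Lemma \ref{finite sum} lets one convert the outer and inner $\lel \cdot \rr$'s into additive operators on the IP-subsum structure, and an induction on $|F|$ identifies $p(n_F)$ with $\sum_{i\in F} p(n_i)$. Extending this bookkeeping from the explicit form in Lemma \ref{lemma:spectra of IP-set is IP 1} to the slightly richer class $\widetilde{SGP_1}$ is where any subtlety sits; once that is in place, Theorem \ref{Main theorem: degree1+d=1} follows from the filter/IP$^*$ machinery exactly as above.
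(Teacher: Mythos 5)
Your argument is correct and is essentially the paper's own proof: the paper likewise combines Lemma \ref{lemma:spectra of IP-set is IP} with the IP$^*$-transitivity of a mildly mixing minimal system, merely phrasing the final step as a contradiction (assuming an IP-set $F$ disjoint from $N(p,U,V)$) rather than directly intersecting $N(U,V)$ with $\{p(n):n\in A\}$. Your remark that the substantive work lives in Lemma \ref{lemma:spectra of IP-set is IP} also matches the paper, which omits that lemma's proof as identical to that of Lemma \ref{lemma:spectra of IP-set is IP 1}.
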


\begin{proof}
	We may assume that $$p(n)=\sum\limits_{i=1}^{\rm{t_1}} \left\lceil b_i\left\lceil \alpha_i n\right\rceil \right\rceil-\sum\limits_{j=1}^{\rm{t_2}} \left\lceil c_j\left\lceil \beta_j n\right\rceil \right\rceil
	, n\in \mathbb{Z}, $$ 
	with $t_1, t_2\in \mathbb{N}$, $\alpha_i, b_i\in \mathbb{R},$ $\beta_j, c_j\in \mathbb{R},$ $i=1, \dots, t_1$,
	 $j=1, \dots, t_2$ and $\sum_{i=1}^{t_1}b_i\alpha_i-\sum_{j=1}^{t_2}c_j\beta_j \neq 0.$
	
	Since $(X, T)$ is a mildly mixing minimal system, one has $A= \{n\in \mathbb{Z}: U\cap T^{-n}V \neq \emptyset\} $ is an IP$^*$-set.
	If $N(p, U, V)$ is not an IP$^*$-set, 
	there exists an IP-set $F$ such that $F\cap N(p, U, V) =\emptyset$.
	That is for any $m\in F$, 
	$$U\cap T^{-p(m)}V =\emptyset.$$
	By Lemma \ref{lemma:spectra of IP-set is IP},
	one has $\{p(m)\in \Z: m\in F\}$ is an IP-set. 
	then $$\{ p(m)\in \Z: m\in F\} \cap A\neq \emptyset.$$
	Choose $m\in F$ such that $p(m)\in A$, that is $U\cap T^{-p(m)}V \neq \emptyset ,$ so there is a contradiction,
	thus $N(p, U, V)$ is an IP$^*$-set.
	 
\end{proof}

\medskip
First we prove an even more special case.

\begin{thm} \label{thm special}
	Let $(X, T)$ be a topologically mildly mixing minimal system, $d\in \mathbb{N}$, and $p_1, \dots, p_d\in \widetilde{SGP_1}$ with $(p_1,\dots,p_d)$ non-degenerate. Then for all non-empty open subsets $U , V_1, \dots, V_d $ of $X$,
	$$\{n\in \Z: U\cap T^{-p_1(n) }V_1\cap \dots \cap T^{-p_d(n) }V_d \neq \emptyset \}$$
	is an IP$^*$-set.
\end{thm}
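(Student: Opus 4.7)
I will proceed by induction on $d$, with the base case $d=1$ supplied by Theorem \ref{Main theorem: degree1+d=1}. For the inductive step, assume the statement holds for $d-1$, and suppose toward a contradiction that there is an IP-set $F$ disjoint from
$$B := \{n\in\Z : U\cap T^{-p_1(n)}V_1\cap\dots\cap T^{-p_d(n)}V_d \ne\emptyset\}.$$
The argument has three ingredients: linearization of the $p_i$ on a refined IP-set, the descending-tower machinery of Lemma \ref{Main lemma:descending sequence}, and the inductive hypothesis applied to the shifted tuple $q_i := p_i - p_d$.

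First I invoke Lemma \ref{lemma:partition regular} together with Remark \ref{the hypotheses} to refine $F$ to an IP-set $F' = \{n_\gamma : \gamma\in\F\}$ generated by some sequence $\{a_n\}$, on which each $p_i$ is additive in the sense that $p_i(n_\gamma + n_\delta) = p_i(n_\gamma) + p_i(n_\delta)$ whenever $\gamma,\delta\in\F$ are disjoint; this is precisely the telescoping identity exploited in the proof of Lemma \ref{lemma:spectra of IP-set is IP}. Since $A(p_i)\ne 0$, Theorem \ref{Main theorem: degree1+d=1} implies that $(X,T)$ is $\{p_1,\dots,p_d\}$-IP$^*$-transitive, so Lemma \ref{Main lemma:descending sequence} produces indices $\alpha_0<\alpha_1<\dots$ in $\F$ with each $n_{\alpha_j}\in F'$, together with descending open sets $V_i^{(n)}\subseteq V_i$ satisfying $T^{p_i(n_{\alpha_j})-j}V_i^{(n)}\subseteq V_i$ for all $0\le j\le n$ and all $i$. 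Using minimality of $(X,T)$ (forward orbits are dense, so by compactness $X=\bigcup_{k=0}^{N}T^{k}U$ for some $N$), fix $n_0\ge N$ so that for every $y\in X$ there exists $j\in\{0,\dots,n_0\}$ with $T^{-j}y\in U$.

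Because $A(q_i)=A(p_i-p_d)\ne 0$ and $A(q_i-q_j)=A(p_i-p_j)\ne 0$, the tuple $(q_1,\dots,q_{d-1})$ is non-degenerate, and the inductive hypothesis applied to the open sets $V_d^{(n_0)},V_1^{(n_0)},\dots,V_{d-1}^{(n_0)}$ yields that
$$D := \{n\in\Z : V_d^{(n_0)}\cap T^{-q_1(n)}V_1^{(n_0)}\cap\dots\cap T^{-q_{d-1}(n)}V_{d-1}^{(n_0)}\ne\emptyset\}$$
is IP$^*$. The tail $F'' := \{n_\gamma\in F' : \min\gamma>\max\alpha_{n_0}\}$ is still an IP-set, so Lemma \ref{IP+IPstar} furnishes some $n = n_\gamma\in D\cap F''$. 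Pick a witness $x\in V_d^{(n_0)}$ with $T^{q_i(n)}x\in V_i^{(n_0)}$ for each $i<d$, and set $x' := T^{-p_d(n)}x$; a direct check shows $T^{p_d(n)}x' = x\in V_d^{(n_0)}$ and $T^{p_i(n)}x' = T^{q_i(n)}x\in V_i^{(n_0)}$ for $i<d$, so $T^{p_i(n)}x'\in V_i^{(n_0)}$ for every $i\in\{1,\dots,d\}$.

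Finally, choose $j\in\{0,\dots,n_0\}$ with $w := T^{-j}x'\in U$, and define $n^* := n_{\alpha_j\cup\gamma} = n_{\alpha_j}+n$; the condition $\gamma>\alpha_{n_0}$ ensures $\gamma\cap\alpha_j=\emptyset$, so additivity on $F'$ gives $p_i(n_{\alpha_j})+p_i(n)=p_i(n^*)$ for every $i$. Combining this with the descending inclusion applied to $T^{p_i(n)}x'\in V_i^{(n_0)}$ produces
$$T^{p_i(n^*)}w \;=\; T^{p_i(n^*)-j}x' \;=\; T^{p_i(n_{\alpha_j})-j}\bigl(T^{p_i(n)}x'\bigr) \;\in\; V_i$$
for every $i$. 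Hence $n^*\in B\cap F$, contradicting $F\cap B=\emptyset$. The main obstacle I anticipate is the first step: arranging a single IP-refinement $F'$ on which all of $p_1,\dots,p_d$ are simultaneously additive in a manner compatible with the descending lemma. This is handled by feeding \emph{all} of the coefficient data $\{b_i,\alpha_i,c_j,\beta_j\}$ appearing in the representations of $p_1,\dots,p_d$ into Lemma \ref{lemma:partition regular} at once, which is legitimate because $\mathcal{F}_{IP}$ satisfies both hypotheses of that lemma.
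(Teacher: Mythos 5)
Your proof is correct and follows the same overall architecture as the paper's: induction on $d$ with Theorem \ref{Main theorem: degree1+d=1} as the base case, the descending tower of Lemma \ref{Main lemma:descending sequence}, the covering $X=\bigcup_{j}T^{j}U$ from minimality, the inductive hypothesis applied to the differences of the $p_i$ on the shrunken sets $V_i^{(\cdot)}$, and the final recombination $T^{p_i(n_{\alpha_b}+n_\beta)}z\in V_i$ with $\alpha_b\cap\beta=\emptyset$. The one genuine divergence is how you arrange the additivity $p_i(n_{\alpha_j}+n)=p_i(n_{\alpha_j})+p_i(n)$. The paper obtains it from the good/proper machinery of Lemmas \ref{lem-goodm} and \ref{lem-proper}, which produce a Nil$_1$ Bohr$_0$-set $C_1$ on which each $p_i$ is additive with respect to the finitely many shifts $n_{\alpha_0},\dots,n_{\alpha_l}$, and the inductive step then selects $n_\beta\in C_1\cap\{n_\alpha:\alpha\in\mathcal F\}$. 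You instead refine the ambient IP-set once and for all via Lemma \ref{lemma:partition regular} (feeding in all coefficient data of all the $p_i$ simultaneously, which is legitimate by Remark \ref{the hypotheses}), so that additivity holds between any two elements of the refined IP-set with disjoint supports --- exactly the telescoping identity proved inside Lemma \ref{lemma:spectra of IP-set is IP 1}. In degree $1$ your route is more elementary and self-contained; the paper's detour through Lemmas \ref{lem-goodm} and \ref{lem-proper} at this stage mainly rehearses the machinery that becomes unavoidable in Section 5, where for $\deg p_i\ge 2$ exact additivity fails and one must control the derivative terms $D(p_i(n),m)$ instead. Your choices of subtracting $p_d$ rather than $p_1$ and of phrasing the argument as a contradiction rather than directly producing $n\in A$ with the required intersection non-empty are cosmetic.
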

\begin{proof}	
	We will prove by induction on $d$ that for all non-empty open sets $U, V_1,\dots, V_d$ of $X$, any IP-set $A$ there exists $n\in A$ such that 
\begin{equation*}
		U\cap T^{-p_1(n) }V_1\cap \dots \cap T^{-p_d(n) }V_d \neq \emptyset.
\end{equation*} 
	
If $d=1$, by Lemma \ref{Main theorem: degree1+d=1}, $N(p_1, U,V_1)$ is an IP$^*$-set. Now we assume that the result holds for $d > 1$. Next we will show that the result holds for $d+1$.  Let $U, V_1, \dots, V_d, V_{d+1}$ be non-empty open subsets of $X$. We will show that there exists $n\in A$ such that 
\begin{equation*}
U\cap T^{-p_1(n) }V_1\cap  \dots \cap T^{-p_{d+1}(n) }V_{d+1} \neq \emptyset.
\end{equation*}

By Lemma \ref{lem-goodm}, there exists $\delta>0,$ $Q\subset \widehat{SGP_f}$ (for some $f\in \mathbb{N}$) and $$C=C(\delta,Q)=\bigcap_{q(n)\in Q}\{n\in \mathbb{Z}:\{q(n)\}\in (-\delta,\delta)\}$$ such that for each $m\in C$, $m$ is good w.r.t. $ p_i(n),i=1,\dots, d+1.$ Let $\tilde{C}=C\cap A$. Since $C$ is an IP$^*$-set, one has $\tilde{C}$ is an IP-set (by Lemma \ref{IP+IPstar}). 
We may assume that $\tilde{C}=\{n_{\alpha}: \alpha\in \mathcal{F}\}$ for some sequence $\{n_i\}_{i=1}^{\infty} \subset \mathbb{Z}$.
	
Since $(X, T)$ is minimal, there is some $l\in \mathbb{N}$ such that $X=\cup_{j=0}^{l}T^{j}U$. 
By Lemma \ref{Main lemma:descending sequence}, 
there are non-empty open subsets $V_1^{(l)}, \dots, V_{d+1}^{(l)}$ of $X$ and $\alpha_0<\alpha_1 <\dots <\alpha_l\in \mathcal{F}$ such that for each $i=1, \dots, d+1$, 
one has $$T^{p_i(n_{\alpha_j})}T^{-j}V_{i}^{(l)} \subset V_i,  \ \text{for all } \ 0\le j \le l.$$
Notice that $n_{\alpha_j} \in \tilde{C} \subset C,j=0,1,\dots, l$ is good w.r.t. $ p_i(n),i=1, \dots, d+1.$  By Lemma \ref{lem-proper}, there is a Nil$_1$ Bohr$_0$-set $C_1$ associated to $\{p_1, \dots, p_{d+1}\}$ and $\{ n_{\alpha_0}, \dots, n_{\alpha_l}\}.$ Thus we have for any $i\in \{1,\dots, d+1\}$ and each $j\in \{0,\dots, l\},$ $p_i(n+n_{\alpha_j})=p_i(n)+p_i(n_{\alpha_j})$ when $n\in C_1.$
	
Let $q_i=p_{i}-p_1 \in \widetilde{SGP_1}$, $i=2, \dots, d+1$. Now applying the induction hypothesis, there is some
 $\beta\in \mathcal{F}$ such that $\beta>\alpha_l$, $n_{\beta}\in C_1\cap \{n_{\alpha}: \alpha\in \mathcal{F}\}$ and $$ V_1^{(l)} \cap T^{-q_2(n_{\b})}V_2^{(l)} \cap \dots \cap T^{-q_{d+1}(n_{\b})}V_{d+1}^{(l)}\neq \emptyset.  $$ Hence there is some $x\in V_1^{(l)}$ such that $T^{p_{i}(n_{\beta})-p_1(n_{\beta})}x\in V_i^{(l)}$ for $i=2, \dots, d+1.$ Clearly, there exists $y\in X$ such that $T^{p_1(n_{\beta})}y=x$. Since $X=\cup_{j=0}^{l}T^jU$, there is some $b\in \{0, 1, \dots, l\}$ such that $T^{b}z=y$ for some $z\in U$. Thus for each $i=1, \dots, d+1$,
\begin{align*}
     T^{ p_i (n_{\beta})+p_i (n_{\a_b})}z
	&=T^{ p_i (n_{\beta})} T^{ p_i (n_{\a_b}) }T^{-b}T^{-p_{1}(n_{\beta})}x \\
	&=T^{ p_i(n_{\a_b})  }T^{-b}T^{p_{i}(n_{\beta})-p_1(n_{\beta})}x\\
	&\subseteq T^{p_i (n_{\a_b}) }T^{-b} V^{(l)}_i  \subseteq V_i.	
\end{align*}
Since $n_{\beta}\in C_1,$ one has
$$p_i(n_{\beta}+n_{\a_b})= p_i (n_{\beta})+ p_i(n_{\a_b})$$
for all $i=1,\dots,d+1$. 
Hence we have, 	$$z \in U\cap T^{-p_1(n_{\a})} V_1 \cap \dots \cap T^{-p_d (n_{\a})}V_d  \cap T^{- p_{d+1}(n_{\a})} V_{d+1},$$ where $\alpha=\beta \cup \a_{b}$ as $\b \cap \a_{b}=\emptyset$ and $n_{\a}\in A.$ By induction the proof is complete.
\end{proof}

Now we can prove our main result for degree $1$ integer-valued generalized polynomials.
\begin{thm}\label{thm simple: degree 1}
	Let $(X, T)$ be a topologically mildly mixing minimal system, $d\in \mathbb{N}$ and $p_1, \dots, p_d\in \widetilde{GP_1}$ with $(p_1,\dots,p_d)$ non-degenerate. Then for all non-empty open subsets $U , V_1, \dots, V_d $ of $X$, $$\{n\in \Z: U\cap T^{-p_1(n) }V_1\cap \dots \cap T^{-p_d(n) }V_d \neq \emptyset \}$$	is an IP$^*$-set.
\end{thm}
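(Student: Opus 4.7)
The strategy is to reduce from $\widetilde{GP_1}$ to $\widetilde{SGP_1}$ using Lemma \ref{lem sgp_gp} and invoke the already established Theorem \ref{thm special}, then cut down by an IP$^*$ set on which the two representatives coincide.

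First, for each $i = 1, \dots, d$ I would apply Lemma \ref{lem sgp_gp} to $p_i \in \widetilde{GP_1}$ to obtain $h_i \in \widetilde{SGP_1}$ together with a set
$$C_i = \bigcap_{k=1}^{t_i} \{ n \in \mathbb{Z} : \{ q_{i,k}(n) \} \in (-\delta_i, \delta_i) \},$$
with $q_{i,k} \in \widehat{SGP_1}$ and $\delta_i>0$ small, on which $p_i(n) = h_i(n)$. Setting $C = \bigcap_{i=1}^d C_i$, the set $C$ lies in $\mathcal{F}_{GP_1}$, which is a filter, so by Theorem \ref{Bohr} and the remark after it, $C$ is an IP$^*$-set.

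Next I would verify that $(h_1, \dots, h_d)$ is a non-degenerate tuple in $\widetilde{SGP_1}$. By the convention introduced after Lemma \ref{lem sgp_gp}, the maximal-degree components of $p_i$ are by definition those of the corresponding $h_i$, so $A(h_i) = A(p_i) \neq 0$. For pairwise differences, note that $h_i - h_j \in \widetilde{SGP_1}$ and that $h_i - h_j$ coincides with $p_i - p_j$ on the $\mathcal{F}_{GP_1}$-set $C_i \cap C_j$; hence $h_i - h_j$ may be taken as the SGP representative of $p_i - p_j$, and $A(h_i - h_j) = A(p_i - p_j) \neq 0$ by hypothesis.

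I would then apply Theorem \ref{thm special} to the non-degenerate tuple $(h_1, \dots, h_d) \in \widetilde{SGP_1}$ to conclude that
$$B = \{ n \in \mathbb{Z} : U \cap T^{-h_1(n)}V_1 \cap \dots \cap T^{-h_d(n)}V_d \neq \emptyset \}$$
is an IP$^*$-set. By Lemma \ref{IP+IPstar}, $B \cap C$ is IP$^*$. For every $n \in B \cap C$ we have $h_i(n) = p_i(n)$ for each $i$, and therefore $n$ lies in the target set $\{ n : U \cap T^{-p_1(n)}V_1 \cap \dots \cap T^{-p_d(n)}V_d \neq \emptyset \}$. Since this target set contains the IP$^*$-set $B \cap C$, it is itself an IP$^*$-set.

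The only delicate point, and the main conceptual obstacle, is the translation of non-degeneracy from $(p_1, \dots, p_d)$ to $(h_1, \dots, h_d)$: one has to be careful that the SGP representatives provided by Lemma \ref{lem sgp_gp} are compatible with differences, i.e., that $h_i - h_j$ genuinely represents $p_i - p_j$ in the sense used to define $A(\cdot)$. Once this compatibility is read off from the fact that agreement on a single set in $\mathcal{F}_{GP_1}$ suffices, the remainder of the argument is a straightforward combination of Lemmas \ref{lem sgp_gp} and \ref{IP+IPstar} with Theorem \ref{thm special}.
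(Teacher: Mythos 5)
Your proposal is correct and follows essentially the same route as the paper: apply Lemma \ref{lem sgp_gp} to replace each $p_i$ by an $\widetilde{SGP_1}$ representative $h_i$ agreeing with it on an IP$^*$-set $C$, invoke Theorem \ref{thm special} for $(h_1,\dots,h_d)$, and intersect the resulting IP$^*$-set with $C$ using the filter property. Your added care in checking that non-degeneracy transfers to $(h_1,\dots,h_d)$ addresses a point the paper leaves implicit, as it is built into the convention stated after Lemma \ref{lem sgp_gp}.
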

\begin{proof}
	Let $p_1, \dots, p_d\in \widetilde{GP_1}$. By Lemma \ref{lem sgp_gp}, there exist $h_i(n) \in \widetilde{SGP_1}$, $i=1,\dots,d$ and  $C=C(\delta,q_1,\dots,q_k)$ such that 
	$$p_i(n)=h_i(n), \forall n \in C,  i=1,\dots,d,$$
	where $\delta$ is small enough and $q_1,\dots,q_k\in \widehat{SGP_1}$.
	
	Set $$N=\{n \in \mathbb{Z}: U\cap T^{-h_1(n)}V_1 \cap \dots \cap T^{-h_d(n)}V_d \neq \emptyset \},$$
	by Theorem \ref{thm simple: degree 1}, $N$ is an IP$^*$-set, and thus
	$N\cap C$ is also an IP$^*$-set. Since for any $n\in N\cap C \subseteq C$, $p_i(n)=h_i(n),i=1,\dots,d$, we have
	$$n\in \{n \in \mathbb{Z}: U\cap T^{-p_1(n)}V_1 \cap \dots \cap T^{-p_d(n)}V_d \neq \emptyset \},$$ that is, \[N\cap C\subseteq \{n \in \mathbb{Z}: U\cap T^{-p_1(n)}V_1 \cap \dots \cap T^{-p_d(n)}V_d \neq \emptyset \},\]
	hence $$\{n\in \Z: U\cap T^{-p_1(n) }V_1\cap \dots \cap T^{-p_d(n) }V_d \neq \emptyset \}$$	is an IP$^*$-set.
\end{proof}

\section{The PET-induction and the generalized polynomial extension of van der Waerden's theorem} \label{PET}
In this section, we will use the PET-induction (Polynomial Exhaustion Technique) to prove the generalized polynomial form of van der Waerden's theorem.
The PET-induction was introduced by Bergelson in \cite{Bergelson87}.
See also \cite{BL96,BL99,Leibman94} for more on the PET-induction. We firstly review some notions from \cite{ZZ2021} (most of which is similar to that used in \cite{Bergelson87}).

\medskip

\begin{de}
Let $p(n), q(n) \in \widetilde{SGP}$, we denote $p \sim q$ if 
$\deg(p)=\deg(q)$ and $\deg(p-q)<\deg(p)$.
One can easily check that $\sim$ is an equivalence relation. A \emph{system} $P$ is a finite subset of $\widetilde{SGP}$.
Given a system $P$, we define its \emph{weight vector} $\Phi(P)=(\omega_1, \omega_2, \dots ) $,
where $\omega_i$ is the number of equivalent classes under $\sim$ of degree $i$ special integer-valued generalized polynomials represented in $P$.
For distinct weight vectors
$ \Phi(P)= (\omega_1, \omega_2, \dots)$ and $\Phi(P')= (\upsilon_1, \upsilon_2, \dots)$,
one writes $\Phi(P)>\Phi(P')$ if $\omega_d>\upsilon_d$, where $d$ is the largest $j$ satisfying
$ \omega_j \neq \upsilon_j$,
then we say that $P'$ \emph{precedes} $P$.
This is a well-ordering of the set of all possible weight vectors and the PET-induction is simply an induction scheme on this ordering.
\end{de}

For example, let $P=\{3n^2- \left\lceil \sqrt{5}n\right\rceil, \left\lceil \pi n^3 \left\lceil \sqrt{2}n\right\rceil\right\rceil + \left\lceil \frac{1}{5}n^3  \right\rceil, n \left\lceil \pi n \right\rceil , \left\lceil \pi n^3 \left\lceil \sqrt{2}n\right\rceil\right\rceil + \left\lceil \pi n^2  \right\rceil \}$,
then $\Phi(P)=(0, 2, 0, 1, 0, \dots )$.

\medskip

Now we are ready to show the following generalized polynomial extension of van der Waerden's theorem, which follows the  train of thought established in the proof of the polynomial  van der Waerden's theorem in \cite{Bergelson06}.

\begin{thm}\label{generalized BL96}
	Let $(X ,T)$ be a dynamical system with the metric $\rho$ and $p_1, \dots, p_d\in \widetilde{SGP}.$ 
	Then for any $\ep>0$, there exists $x\in X$ and $n\in \N$ such that
	\begin{equation}\label{a11}
	\rho(T^{p_t(n)}x,x)<\ep
	\end{equation}
	for all $t=1,\dots ,d$ simultaneously. Moreover, the set
	$$\{n\in \Z : \forall \ep >0, \exists x\in X \ \text{s.t.} \ \forall t\in \{1, \dots ,d\}, \ \eqref{a11}\ \text{is satisfied} \}$$
	is an IP$^*$-set.
\end{thm}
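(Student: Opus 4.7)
The plan is to prove the theorem via Bergelson's PET-induction scheme on the weight vector $\Phi(P)$ of the system $P=\{p_1,\ldots,p_d\}\subset\widetilde{SGP}$, as set up in Section \ref{PET}. I would actually establish the stronger inductive claim: for every IP-set $A\subset\mathbb{Z}$ and every $\epsilon>0$, there exist $n\in A$ and $x\in X$ with $\rho(T^{p_t(n)}x,x)<\epsilon$ for all $t=1,\ldots,d$. Since IP$^*$-sets are exactly those meeting every IP-set, this yields the IP$^*$ conclusion of the ``moreover'' clause after extracting a single $x$ by a compactness argument as $\epsilon_k\to 0$.

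The base case is the minimal weight vector $\Phi(P)=(1,0,0,\ldots)$, i.e., $P$ is a single $\sim$-class of degree-$1$ SGPs. Reducing to a representative $p\in\widetilde{SGP_1}$ with $A(p)\neq 0$, Lemma \ref{lemma:spectra of IP-set is IP 1} guarantees that $\{p(n):n\in A\}$ contains an IP-set $B$. Combining this with the classical topological IP-recurrence theorem for the single homeomorphism $T$ (for every IP-set $B$ and every $\epsilon>0$, some $m\in B$ is an $\epsilon$-return time for some $x\in X$) produces an $m=p(n)$ with $n\in A$ and a corresponding $x$ witnessing the desired recurrence.

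For the inductive step, assume the claim for all systems $P'$ with $\Phi(P')<\Phi(P)$, and fix an IP-set $A=\{n_\alpha:\alpha\in\mathcal{F}\}$ together with $\epsilon>0$. First, refine $A$ to a sub-IP-set $\tilde A\subset A$ whose elements are good with respect to every $p_i$, using Lemma \ref{lem-goodm} (which provides a Nil Bohr$_0$-set of good integers) and Lemma \ref{IP+IPstar} (to keep the intersection IP). Fix an element $m\in\tilde A$ and invoke Lemma \ref{lem-proper} to pass to a further Nil Bohr$_0$-refinement $C$ on which each $p_i$ is proper w.r.t.\ $m$, so that $p_i(n+m)-p_i(m)=p_i(n)+q_i(n)$ with $\deg(q_i)<\deg(p_i)$ for $n\in C$. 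Picking a polynomial $p\in P$ of minimal weight in the $\Phi$-ordering, one forms the derived system $P'$ by replacing each $p_i$ by $p_i-p$ and supplementing with the lower-degree corrections $q_i$; the standard PET book-keeping then shows $\Phi(P')<\Phi(P)$. Applying the induction hypothesis to $P'$ along a sub-IP-set of $\tilde A\cap C$ yields some $n_\gamma$ and $x$ satisfying the $P'$-recurrence, and then the point $y:=T^{p(m)}x$ together with the index $n_{\beta\cup\gamma}\in A$ (where $m=n_\beta$ and $\beta\cap\gamma=\emptyset$) witnesses the required $\epsilon$-recurrence for the original system $P$.

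The main obstacle is the PET book-keeping step: establishing the strict decrement $\Phi(P')<\Phi(P)$ while ensuring all polynomials of $P'$ remain in $\widetilde{SGP}$. For ordinary polynomials this is direct since $p(n+m)=p(n)+p(m)+(\text{lower-degree terms})$, but for generalized polynomials the rounding operators $\lceil\cdot\rceil$ obstruct any naive expansion. This is precisely where the good/proper machinery of \cite{ZZ2021} is essential: it linearizes the rounded operations on a suitable Nil Bohr$_0$-refinement. Verifying that the resulting $P'$ is a genuine SGP-system of strictly smaller weight vector, and that the constructed witness $y$ and index $n_{\beta\cup\gamma}$ land in the correct sets, is the technical heart of the argument.
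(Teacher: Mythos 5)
Your framework is the right one---PET induction on the weight vector, with Lemma \ref{lem-goodm}, Lemma \ref{lem-proper} and an IP-refinement of $A$ supplying the linearization $p_i(n+m)=p_i(n)+p_i(m)+(\text{lower order})$ on a Nil Bohr$_0$-set---but the inductive step as you have written it does not close, and the gap is fatal rather than cosmetic. A single application of the induction hypothesis to a derived system only produces a point whose polynomial images all cluster near a \emph{different} point: if $x$ satisfies the $P'$-recurrence and you set $y=T^{p(m)}x$, you do not get $\rho(T^{p_i(m+n_\gamma)}y,y)<\ep$. Test it on $d=1$, $p_1(n)=n^2$: the derivative is $q_1(n)=2mn$, the hypothesis gives $T^{2mn_\gamma}x\approx x$, but $(m+n_\gamma)^2=m^2+2mn_\gamma+n_\gamma^2$, and nothing controls $T^{m^2+n_\gamma^2}$. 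In general the one-step reduction yields $T^{p_t(n)}x_1\approx x_0$ with $x_0\neq x_1$, which is proximality to a shifted target, not recurrence.

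The paper's proof supplies the missing mechanism: a Bergelson--Leibman style focusing argument. One assumes $(X,T)$ minimal (WLOG), builds an infinite sequence of points $x_0,x_1,x_2,\dots$ and increments $n_1,n_2,\dots$ drawn from nested sub-IP-sets $\tilde A_{n_1,\dots,n_m}$ so that $\rho(T^{p_t(n_m+\dots+n_{l+1})}x_m,x_l)<\ep/2$ for \emph{all} $l<m$ simultaneously; to arrange this, the derived system at stage $m$ must contain all $d(m+1)$ polynomials $q_{t,l}(n)=D(p_t(n),n_m+\dots+n_{l+1})+p_t(n)-p_1(n)$, $0\le l\le m$, not just the derivatives with respect to a single shift (the PET bookkeeping still gives $\Phi(\SS_m)<\Phi(\SS)$ because all $q_{t,l}$ with $p_t\not\sim p_1$ stay equivalent to $p_t$). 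Minimality is used to cover $X=\bigcup_{i=0}^N T^{-i}B(x_m,\ep_m/2)$ and relocate the point produced by the induction hypothesis, and only at the very end does compactness of $X$ produce $l<m$ with $\rho(x_l,x_m)<\ep/2$, whence $n=n_m+\dots+n_{l+1}\in\tilde A\subseteq A$ is the desired return time. Without this iterated construction and final pigeonhole, your argument proves a strictly weaker statement. (Your base case is also organized differently from the paper's---the paper's induction bottoms out at the zero system, with $d=1$, degree $1$ handled inside the inductive step via an empty $\SS_0$---but that choice could be made to work; the inductive step is the real problem.)
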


\begin{proof}
	Without loss of generality, we may assume that $(X,T)$ is minimal.
	
	First of all notice that Theorem \ref{generalized BL96} holds trivially if all  $p_t(n)$, $t=1,\dots , d$ are zeros. 
	We shall prove (using PET-induction) that Theorem \ref{generalized BL96} is valid for any system by showing that its validity for an arbitary system $\SS =\{p_1, \dots, p_d\}$ follows from its validity for all the systems preceding $\SS$.
	
	Given $\ep >0$ and let $A=FS(\{s_i\}_{i\in\N})$ be an IP-set. For $n\in A,$ $n=s_{i_1}+\dots +s_{i_m},$ define its support $\sigma(n)$ by $\sigma(n)=\{i_1,\dots,i_m\}$ and let 
	\[A_n=FS(\{s_i\}_{i\in \N\setminus\sigma(n)});\]
	for $n_1,\dots,n_q\in A$ define
	$$A_{n_1,\dots,n_q}=FS(\{s_i\}_{i\in \N\setminus \cup^q_{j=1}\sigma(n_j)}).$$
	It is clear from the definition for IP-set that for any $n\in A$ and $l\in A_n,$ we have $n+l\in A.$

	By Lemma \ref{lem-goodm}, there exists $\delta>0,$ $Q\subset \widehat{SGP_a}$ (for some $a\in \mathbb{N}$) and 
	$$C=C(\delta,Q)=\bigcap_{q(n)\in Q}\{n\in \mathbb{Z}:\{q(n)\}\in (-\delta,\delta)\}$$ 
	such that for each $m\in C$, $m$ is good w.r.t. $p_t(n)\ \text{for}\ 1\le t \le d.$ Let $\tilde{A}=A\cap C$, then $\tilde{A}$ is also an IP-set. 
	We may assume that  $\tilde{A}=FS(\{\tilde{s}_i\}_{i\in\N})$.
	Moreover, denote 
	$$\tilde{A}_{n_1,\dots,n_q}=FS(\{\tilde{s}_i\}_{i\in \N\setminus \cup^q_{j=1}\sigma(n_j)})\subseteq \tilde{A}$$ 
	with $n_1,\dots,n_q\in \tilde{A}.$ Notice that $\tilde{A}_{n_1,\dots,n_q}$ are still IP-sets for any $n_1,\dots,n_q\in \tilde{A}.$

	Let $p_1$ be of the minimal degree in the system $\SS$, 
	we may assume that $\SS$ does not contain trivial polynomials, and so $\deg(p_1)\geq1.$  
	Consider the system 
	$$\SS_0 = \{p_2 -p_1,\ p_3 -p_1, \dots, p_d-p_1\}$$
	(if $d=1,$ $\SS_0$ is empty).
	
	Notice that the elements of $\SS$ nonequivalent to $p_1 $ do not change their degree and the equivalence of one to another after they have been subtracted by $p_1$; on the other hand, the degrees of the elements of $\SS$ which are equivalent to $p_1$ do decrease after these elements have been subtracted by $p_1$. 
	Hence, the number of equivalence classes with the minimal degree in $\SS$ decreases by $1$ when we pass from $\SS$ to $\SS_0$ (although some new equivalence classes with smaller degrees can arise in $\SS_0$). This means that $\Phi(\SS_0)<\Phi(\SS)$, and by the PET-induction hypothesis, the statement of the theorem is valid for $\SS_0.$  
	
	Therefore, one can choose $y_0\in X,$ $n_1\in \tilde{A}$ such that
	\[\rho(T^{p_t(n)-p_1(n)}y_0, y_0)<\frac{\ep}{2}\ \text{for}\ t=2\ ,\dots ,d. \]  
	Denote $x_0=y_0,$ $x_1=T^{-p_1(n_1)}y_0$, then
	\[T^{p_1(n_1)}x_1=x_0,\ \rho(T^{p_t(n_1)}x_1,x_0)<\frac{\ep}{2}\ \text{for}\ t=2,\dots, d.\]     
	We will find a sequence of points $x_0, x_1, x_2, \dots\in X$ and a sequence of integers $n_1,n_2,\dots$ with $n_1\in \tilde{A},$ $n_{m+1}\in \tilde{A}_{n_1,\dots, n_m},$ $m\in \N$ such that for every $l,m,l<m,$ one has 
	\begin{equation}\label{ml}
	\rho(T^{p_t(n_m+\dots +n_{l+1})}x_m,x_l)<\frac{\ep}{2}\ \text{for any}\ t=1,\dots, d.
	\end{equation}
	The points $x_0,x_1$ and the integer $n_1$ have already been chosen, suppose that $x_m\in X,\ n_m\in \tilde{A}_{n_1,\dots, n_{m-1}}$ have been chosen. 
	The inequality $\eqref{ml}$ holds not only for $x_m$ but also for all the points of the $\ep_m$-neighborhood of $x_m$ for some $\ep_m,$ $0<\ep_m<\frac{\ep}{2}.$ 
	Since $(X,T)$ is assumed to be minimal, there is $N\in \mathbb{N}$ such that $X=\cup_{i=0}^N T^{-i}B(x_m, \frac{\ep_m}{2})$,
	where $B(x_m, \frac{\ep_m}{2})=\{x\in X: \rho(x, x_m)<\frac{\ep_m}{2}\}$.
   Choose $\delta_m>0$ such that for any $y, y'\in X$ with $\rho(y, y')<\delta_m$,
   one has 
   $\rho(T^by, T^by')<\frac{\ep_m}{2}$, for all $b=0, \dots, N$.
Thus for every $y\in X$, there is some $b,$ $0\leq b \leq N$ such that the inequality $\rho(y,y')<\delta_m$ implies 
 	\begin{equation}\label{by T continuity}
 	\rho(T^b y', x_m)<\ep_m.
 	\end{equation}
	Let 
	\begin{align*}
		&\{p_{t,l}(n)\}_{1\leq t \leq d,\ 0\leq l \leq m} \\
		=&\begin{cases}
			p_t(n+n_m+\dots+n_{l+1})-p_t(n_m+\dots+n_{l+1})-p_1(n) , &\text{if} \ 0\leq l \leq m-1, \\   p_t(n)-p_1(n) , &\text{if} \ l=m. \end{cases}  	
	\end{align*}
	(Notice that if $l=m-1,$ then $n_m+\dots+n_{l+1}=n_m.$)

	Since $n_1\in \tilde{A},n_2\in \tilde{A}_{n_1},\dots,n_{m}\in \tilde{A}_{n_1,\dots, n_{m-1}},$  
	one gets 
	$$\{n_m,n_m+n_{m-1},\dots,n_m+\dots+n_1\}\subset \tilde{A}.$$ 
	By Lemma \ref{lem-proper}, there is a Nil$_{h_1}$ Bohr$_0$-set $C_1$  associated to $\{p_1, \dots, p_d\}$ and $\{n_m,n_m+n_{m-1},\dots,n_m+\dots+n_1\}$. For any $1\leq t \leq d,\ 0\leq l \leq m-1$ and $n\in C_1$,
	put $$D(p_t(n),n_m+\dots+n_{l+1})=p_t(n+n_m+\dots+n_{l+1})-p_t(n_m+\dots+n_{l+1})-p_t(n), $$
	then $$D(p_t(n),n_m+\dots+n_{l+1}) \in \widetilde{SGP}$$ 
	with $\deg(D(p_t(n), n_m+\dots+n_{l+1}))<\deg(p_t)$. 
	
	Let $q_{t,l}(n)=D(p_t(n),n_m+\dots+n_{l+1})+p_t(n)-p_1(n)$ if $0\leq l \leq m-1$ and $q_{t,m}(n)=p_t(n)-p_1(n)$. Then for all $0\le l \le m$, $q_{t,l}(n) \in \widetilde{SGP}$ and $$p_{t,l}(n)=q_{t,l}(n), \forall n\in C_1.$$
		Denote
	\[\SS_m=\{q_{t,l}:1\le t\le d,\  0\leq l \leq m\}.\]
	If $p_t(n)$ is not equivalent to $p_1(n)$, the elements $q_{t,0}(n),\dots,q_{t,m}(n)\in \SS_m$  are equivalent to   $p_t(n)$  and their equivalence is preseved, that is, if $p_t(n)$ is equivalent to $p_s(n)$ then $q_{t,j}(n)$ is equivalent to $q_{s,i}(n)$ for every $j,i=0,\dots ,m$. If $p_t(n)$ is equivalent to $p_1(n)$, then the degrees of these elements decrease: $\deg(q_{t,j})<\deg(p_t)=\deg(p_1(n)),\ 0\leq j \leq m$. 
	So the number of equivalence classes having degrees greater than $\deg(p_1(n))$ does not change, whereas the number of equivalence classes of elements having the minimal degree in $\SS_m$ decreases by 1 when we pass from $\SS$ to $\SS_m$.
	Then $\SS_m$ precedes $\SS$ and by our PET-induction hypothesis, the conclusion of Theorem \ref{generalized BL96} holds for the system $\SS_m.$
	
	Hence, we can find $y_m\in X,$ $n_{m+1}\in \tilde{A}_{n_1,\dots, n_m}\cap C_1$ such that 
	$\rho(T^{q_{t,l}(n_{m+1})}y_m, y_m)<\delta_m$ for every $t$, $1\leq t \leq d$, and any $l$, $0\leq l\leq m.$ 
	In light of $p_{t,l}(n)=q_{t,l}(n), \forall n\in C_1,$ one has $\rho(T^{p_{t,l}(n_{m+1})}y_m, y_m) <\delta_m$,  $1\leq t \leq d$, $0\leq l\leq m.$ Choose $b \in \{0, 1, \dots, N\}$ such that $\eqref{by T continuity}$ holds for all $y'$ from the $\delta_m$-neighborhood of $y_m $, denote $x_{m+1}=T^{-p_1(n_{m+1})}T^{b}y_m.$ Then, for each $t=1,\dots ,d,$ we have 
	\begin{align*}
		&\rho(T^{p_t(n_{m+1}+n_m+\dots+n_{l+1})}T^{-p_t(n_m+\dots +n_{l+1})}x_{m+1},x_m)\\
		=&\rho(T^bT^{p_{t,l}(n_{m+1})}y_m,x_m) \\
		<&\ep_m, \quad l=0,\dots, m-1,
	\end{align*}
	and
	\begin{align*}
	&\rho(T^{p_t(n_{m+1})}x_{m+1},x_m)\\
	=&\rho(T^{p_t(n_{m+1})}T^{-p_1(n_{m+1})}T^{b}y_m,x_m)\\
	<&\ep _m.
	\end{align*}
	Hence by the choice of $\ep_m,$ 
	one gets 
	\[\rho(T^{p_t(n_{m+1}+n_m+\dots +n_{l+1})}x_{m+1},x_l)<\frac{\ep}{2}, \ \text{for}\ l=0,\dots, m-1\]
	and
	\[\rho(T^{p_t(n_{m+1})}x_{m+1},x_m)<\frac{\ep}{2}.\]
	By the above procedure, we have found the sequence of points $x_0,x_1,x_2,\dots\in X$ and the sequence of integers $n_1,n_2,\dots$ with $n_1\in \tilde{A},$ $n_{m+1}\in \tilde{A}_{n_1,\dots, n_m},$ $m\in \N$ satisfying the inequality \eqref{ml}.
	Since $X$ is compact, there exist $l,m,l<m,$ such that $\rho(x_l, x_m)<\frac{\ep}{2}.$ 
	Put $n=n_m+\dots+n_{l+1}\in \tilde{A}\subseteq A,$ then $\rho(T^{p_t(n)}x_m,x_l)<\frac{\ep}{2},$ for each $t,$ $1\leq t\leq d,$ and hence $\rho(T^{p_t(n)}x_m, x_m)<\ep.$
\end{proof}

\begin{cor}\label{generalized BL96 minimal version}
	If $(X,T)$ is minimal and $p_1, \dots, p_d\in \widetilde{SGP},$ then for any IP-set $A$, any non-empty open set $V\subset X$, there exists $n\in A$ such that
	$$V\cap T^{-p_1(n)} V\cap \dots \cap T^{-p_d(n)} V\neq \emptyset.$$

\end{cor}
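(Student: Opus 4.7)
The approach is to deduce the corollary from Theorem \ref{generalized BL96} by exploiting the minimality of $(X,T)$ to convert an approximate simultaneous return (at an uncontrolled point of $X$) into a genuine hit on the prescribed open set $V$. The bridge is uniform continuity of finitely many iterates of $T$, combined with the fact that, by minimality, $X$ is covered by finitely many preimages of a small ball sitting inside $V$.

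First I fix the geometric setup. Pick any $x_0 \in V$ and choose $\delta > 0$ so small that $B(x_0, 2\delta) \subseteq V$. By minimality of $(X,T)$, the open cover $\{T^{-i} B(x_0, \delta)\}_{i \geq 0}$ of the compact space $X$ admits a finite subcover, giving an $N \in \mathbb{N}$ with $X = \bigcup_{i=0}^{N} T^{-i} B(x_0, \delta)$. Next use uniform continuity of the finite family $T^0, T, \ldots, T^N$ to pick $\epsilon > 0$ such that $\rho(y, y') < \epsilon$ implies $\rho(T^i y, T^i y') < \delta$ for every $i \in \{0, 1, \ldots, N\}$.

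Now I apply the IP$^*$ part of Theorem \ref{generalized BL96}: the set
\[
S := \{ n \in \Z : \forall \eta > 0,\ \exists x \in X,\ \forall t \in \{1, \ldots, d\},\ \rho(T^{p_t(n)}x, x) < \eta \}
\]
is an IP$^*$-set, hence intersects the IP-set $A$. Choose $n \in S \cap A$; for this $n$ and the $\epsilon$ above, there exists $x^\ast \in X$ with $\rho(T^{p_t(n)}x^\ast, x^\ast) < \epsilon$ for all $t = 1, \ldots, d$. Pick $i \in \{0, 1, \ldots, N\}$ with $T^i x^\ast \in B(x_0, \delta)$, and set $x := T^i x^\ast$, so $x \in V$. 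Since $T^i$ commutes with $T^{p_t(n)}$ and the uniform-continuity choice of $\epsilon$ applies,
\[
\rho(T^{p_t(n)} x, x) = \rho(T^i T^{p_t(n)} x^\ast,\, T^i x^\ast) < \delta,
\]
so $T^{p_t(n)} x \in B(x, \delta) \subseteq B(x_0, 2\delta) \subseteq V$ for every $t$. Therefore $x \in V \cap \bigcap_{t=1}^{d} T^{-p_t(n)} V$, with $n \in A$, which is the desired statement.

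There is no serious obstacle at this stage: all the genuine work (the PET-induction, the passage to ``good'' integers via Lemma \ref{lem-goodm}, the passage to ``proper'' polynomials via Lemma \ref{lem-proper}, and the inductive construction of the sequences $\{x_m\}, \{n_m\}$) is already packaged inside Theorem \ref{generalized BL96}. The corollary is simply a clean topological upgrade of that theorem, obtained from uniform continuity and the finite cover of $X$ by preimages of a ball in $V$ afforded by minimality.
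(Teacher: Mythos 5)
Your proof is correct and is exactly the derivation the paper leaves implicit (the corollary is stated without proof): you combine the IP$^*$ ``moreover'' clause of Theorem \ref{generalized BL96} with the finite cover $X=\bigcup_{i=0}^{N}T^{-i}B(x_0,\delta)$ afforded by minimality and the uniform continuity of $T^0,\dots,T^N$ to transport the approximate simultaneous return into the prescribed open set $V$. This is the standard Furstenberg--Weiss-style upgrade and matches the intended route.
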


\medskip

\section{Proof of Theorem \ref{thm general}}

In this section, we  proceed to use the PET-induction to prove our main result Theorem \ref{thm general}. 
As in Section \ref{linear case}, we first prove the special case (Theorem \ref{thm simple: degree 1}). Likewise, before carrying out our main result we focus on the elements in $\widetilde{SGP}$. We start with a brief overview of our idea of this proof by the PET-induction, precisely, in order to prove the Theorem \ref{thm general} for any system $P$ in $\widetilde{SGP}$, we will start from $\Phi(P)=(d,0,\dots)$ (this is true by Theorem \ref{thm special}). After that, we assume the result holds for any system $P'$ with $\Phi(P')<\Phi(P)$. Then we show the result holds for $P$, and we complete the proof.

\medskip
 
To simplify our exposition, we introduce some notations  given by Zhang and Zhao in \cite{ZZ2021}. 

\begin{de}
	Let $a, b\in \mathbb{R}$, $p\in  \widetilde{SGP}$ and $N_0=1000.$ We denote  $a>>_N b$ if $a>b>0$ and $a>Nb$; we denote $a>>_{N_0} b$ by $a>>b$; we use the symbol $a \approx_N  b$ when $|a|>>_N|a-b|$ and $|b|>>_N|a-b|$; by $a \approx b$ we mean $a \approx_{N_0}b$; $p =_{C} q$ iff $p(n)=q(n)$ for any $n \in C$.
\end{de}
 

		
		
		
		

\medskip


Recall that $A(p(n))$ is the sum of the coefficients of the maximal-degree components of the generalized polynomial $p(n)$ (see Definition \ref{def-nondegenerate}).  We have the following lemmas from \cite{ZZ2021}.
 
\begin{lem}\cite[Lemma 4.3]{ZZ2021} \label{lem-A(p)-general}
	Let $h(n) =\sum_{k=1}^l c_k\left \lceil p_k(n) \right \rceil \in \widetilde{SGP_d}$, where $c_k \in \mathbb{Z}$, $p_{k}(n) =\prod_{i=1}^{t}L(a_{i,1}n^{j_{i,1}},\dots, a_{i,l_{i}}n^{j_{i,l_i}})$ with $|a_{i,m}|>>1$, $1\leq i\leq t, 1\leq m\leq l_i$, $|A(h(n))|>>1$ and $\deg(h)\ge 2$. Let $m(h)=\frac{2\sum_{k=1}^{l}|c_k \deg(p_k(n))A(p_k(n))|}{|\sum_{k=1}^{l}c_k\deg(p_k(n))A(p_k(n))|}.$ There are  Nil Bohr$_0$-sets $C, $ $C_1$ such that for any $m\in C,$ $ |m|>m(h) $  we can find $D(h(n),m) \in \widetilde{SGP_{d-1}}$ with $\deg(D(h(n),m)) <\deg(h(n))$ such that 
	
	$$D(h(n),m)=_{C_1}h(n+m)-h(n)-h(m),$$ 
	$$A(D(h(n),m))\approx_{2N_0} \deg(h) mA(h(n)).$$
		We call $D(h(n),m)$ the derivative of $h(n)$ w.r.t. $m$.
	
\end{lem}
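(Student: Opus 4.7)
The plan is to prove the lemma by induction on $d=\deg(h)$, reducing to a single summand $c_k\lceil p_k(n)\rceil$ and then unfolding one layer of the nested ceilings inside $p_k$. The starting point is the observation that if one can show the conclusion for each simple generalized polynomial $p_k(n)=\prod_{i=1}^t L(a_{i,1}n^{j_{i,1}},\dots,a_{i,l_i}n^{j_{i,l_i}})$, then summing with coefficients $c_k$ yields the result for $h$, provided one has enough control on the fractional parts to distribute the outer $\lceil\cdot\rceil$ across the sum. The control is supplied by Lemma \ref{lem-goodm} (yielding $C$ on which every $m\in C$ is good w.r.t.\ $h$) and by Lemma \ref{lem-proper} (yielding a Nil Bohr$_0$-set $C_1$ on which the difference $h(n+m)-h(n)-h(m)$ behaves like an honest element of $\widetilde{SGP_{d-1}}$, using Lemma \ref{finite sum} to split ceilings). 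With these tools fixed, the problem reduces to a purely formal polynomial expansion plus bookkeeping of the leading coefficients.

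For the leading coefficient, I would proceed as follows. Consider first the monomial case $p(n)=a n^d$: the binomial expansion gives
\[
p(n+m)-p(n)-p(m)=\sum_{j=1}^{d-1}\binom{d}{j}a\,m^{j}n^{d-j},
\]
whose top $n$-coefficient is $d\,m\,a=\deg(p)\cdot m\cdot A(p)$. For a general $p_k=\prod_i L(a_{i,1}n^{j_{i,1}},\dots)$, I would unroll $L$ by its recursive definition $L(a_1,\dots,a_l)=a_1\lceil L(a_2,\dots,a_l)\rceil$ from the innermost ceiling outward; on the set where $m$ is good w.r.t.\ $p_k$ and on $C_1$ where all the relevant fractional parts are small, each $\lceil L(\cdot)(n+m)\rceil-\lceil L(\cdot)(n)\rceil-\lceil L(\cdot)(m)\rceil$ can be replaced by $\lceil L(n+m)-L(n)-L(m)\rceil$, and then the induction hypothesis (one less layer of $L$) gives a lower-degree $\widetilde{SGP}$-expression whose $A$-invariant is controlled. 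Iterating over the product $\prod_i$ by a Leibniz-type rule and summing over $k$ assembles $D(h(n),m)$ and shows that its top-degree-$n$ coefficient is $\sum_k c_k\deg(p_k)A(p_k)\cdot m=\deg(h)\cdot m\cdot A(h(n))$, since $\deg(p_k)=\deg(h)$ for every $k$ contributing to the maximal degree.

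The role of the threshold $m(h)=\dfrac{2\sum_k|c_k\deg(p_k)A(p_k)|}{|\sum_k c_k\deg(p_k)A(p_k)|}$ is to control cancellations: it ensures that the unconditional sum of absolute values of corrections (which arise from ceiling-induced error terms of size at most $1/2$, from off-diagonal binomial pieces $\binom{d}{j}m^j$ with $j\geq 2$, and from lower-order nested contributions) is dominated by the signed main term $\deg(h)\cdot m\cdot A(h)$, giving the approximation $A(D(h,m))\approx_{2N_0}\deg(h)\cdot m\cdot A(h)$. The hypotheses $|a_{i,m}|\gg 1$ and $|A(h)|\gg 1$ are used in the same spirit, to guarantee that the residual ceiling-error terms (bounded in absolute value) are genuinely subordinate to the coefficients appearing in the main expansion.

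The main obstacle will be the combinatorial bookkeeping in the nested-ceiling expansion: each layer of $L$ produces its own derivative term, and tracking which contributions reach the top $n$-degree without being killed by inner ceiling corrections requires maintaining a careful invariant through the induction. A secondary subtlety is that the Nil Bohr$_0$-set on which one can freely swap $\lceil a+b\rceil$ with $\lceil a\rceil+\lceil b\rceil$ depends on finitely many auxiliary simple generalized polynomials arising from the expansion; I would collect all of them into a single filter-intersection via Remark \ref{rem-filer} (so that their joint Nil Bohr$_0$-set remains in $\mathcal{F}_{GP_s}$), and only then declare $C$ and $C_1$. The rest is a degree-counting verification that the output lives in $\widetilde{SGP_{d-1}}$.
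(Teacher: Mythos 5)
First, a point of reference: this paper gives no proof of the statement at all --- it is quoted verbatim from \cite[Lemma 4.3]{ZZ2021} --- so there is no in-paper argument to compare yours against line by line. That said, your architecture is the natural one and matches how this circle of results is organized here: reduce to a single summand $c_k\lceil p_k(n)\rceil$, unfold the nested $L$-structure one ceiling at a time on a Nil Bohr$_0$-set where Lemma \ref{finite sum} permits splitting $\lceil a+b\rceil$ as $\lceil a\rceil+\lceil b\rceil$, invoke Lemma \ref{lem-goodm} for $C$ and Lemma \ref{lem-proper} for $C_1$, and pool the finitely many fractional-part constraints using Remark \ref{rem-filer}. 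Using Lemma \ref{lem-proper} as a black box is legitimate within this paper's framework and correctly isolates the genuinely new content of the lemma, namely the quantitative estimate $A(D(h(n),m))\approx_{2N_0}\deg(h)\,m\,A(h(n))$ together with the explicit threshold $m(h)$.

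That quantitative part is where your proposal has a genuine gap. You explain the role of $|m|>m(h)$ as dominating a list of ``corrections'' that includes the off-diagonal binomial pieces $\binom{d}{j}a\,m^{j}n^{d-j}$ with $j\ge 2$; but those terms have $n$-degree at most $d-2$ and are invisible to $A(D(h(n),m))$, which by definition records only the degree-$(d-1)$ components. Worse, if the relevant corrections really grew like $m^{j}$ with $j\ge 2$, a \emph{lower} bound on $|m|$ would hurt rather than help, since the main term $\deg(h)\,m\,A(h)$ is only linear in $m$. The point your sketch never establishes is that the defects in the leading coefficient of $D(c_k\lceil p_k\rceil,m)$ --- which come from fractional-part errors such as $a(\lceil bm^{j}\rceil-bm^{j})$ propagated through the nested ceilings and the Leibniz expansion of $\prod_i$ --- are bounded \emph{uniformly in $m$}, by a quantity of order $\frac{1}{N_0}\lvert c_k\deg(p_k)A(p_k)\rvert$ thanks to $|a_{i,m}|\gg 1$. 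Only with such a uniform per-summand bound does summing over $k$ and imposing $|m|>m(h)=2\sum_k\lvert c_k\deg(p_k)A(p_k)\rvert/\lvert\sum_k c_k\deg(p_k)A(p_k)\rvert$ force the total defect below $\frac{1}{2N_0}\lvert\deg(h)\,m\,A(h)\rvert$, which is exactly the $\approx_{2N_0}$ assertion. Carrying that explicit error estimate through the induction is the substance of the lemma, and it is precisely the step you defer to ``combinatorial bookkeeping''; as written, the stated form of $m(h)$ is not derived and the approximation is not proved.
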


\begin{lem}\cite[Lemma 4.5]{ZZ2021}\label{lem-multi-q(ij)}
	Let $d\in \mathbb{N}$, $p_i \in \widetilde{SGP}, i=1, \dots, d$, with $(p_1,\dots,p_d)$ non-degenerate, $\deg(p_i)\ge 2, 1\le i \le d$ and each $p_i$ satisifies conditions in Lemma \ref{lem-A(p)-general} and $|A(p_i-p_j)|>>1,1\leq i\neq j\leq d.$ Then there exists a Nil Bohr$_0$-set $C$ and a sequence $\{r(n)\}_{n=0}^{\infty}$ of natural numbers, such that for any $l \in \mathbb{N}$ and $k_0,k_1,\dots,k_l \in C$	with $|k_0|>r(0)$ and $|k_{i}|>|k_{i-1}|+r(|k_{i-1}|),1\leq i\leq l$,
	there exists a Nil Bohr$_0$-set $C_1$ and $q_{i,j}\in \widetilde{SGP}$ with
	$$q_{i,j}(n):=D(p_i(n),k_j)+p_i(n)-p_1(n)=_{C_1}p_i(n+k_j)-p_i(k_j)-p_1(n)$$
	and
	\begin{equation}\label{eq-a(qij-q)}
	|A(q_{i,j}(n))|>>1, |A(q_{i,j}(n)-q_{i',j'}(n))|>>1 
	\end{equation}
	for all $ (i,j) \neq (i',j')\in \{1,\dots,d\}\times\{0,1,\dots,l\}$.
\end{lem}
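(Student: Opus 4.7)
The plan is to construct $q_{i,j}$ as a sum of a derivative in the sense of Lemma~\ref{lem-A(p)-general} and a correction, and then choose $\{r(n)\}$ fast enough that a leading-coefficient analysis closes. Applying Lemma~\ref{lem-A(p)-general} to each $p_i$ produces Nil Bohr$_0$-sets $C^{(i)}, C_1^{(i)}$ and a threshold $m(p_i)$ such that whenever $m\in C^{(i)}$ with $|m|>m(p_i)$ there exists $D(p_i(n),m)\in \widetilde{SGP}$ of degree $\deg(p_i)-1$ satisfying
\[D(p_i(n),m)=_{C_1^{(i)}}p_i(n+m)-p_i(n)-p_i(m),\quad A(D(p_i(n),m))\approx_{2N_0}\deg(p_i)\,m\,A(p_i).\]
I will set $C=\bigcap_{i=1}^d C^{(i)}$ and $C_1=\bigcap_{i=1}^d C_1^{(i)}$, both of which remain Nil Bohr$_0$-sets because $\mathcal{F}_{GP_s}$ is a filter (Remark~\ref{rem-filer}), and define $q_{i,j}(n):=D(p_i(n),k_j)+p_i(n)-p_1(n)$. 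Substituting on $C_1$ yields
\[q_{i,j}(n)=_{C_1}p_i(n+k_j)-p_i(n)-p_i(k_j)+p_i(n)-p_1(n)=p_i(n+k_j)-p_i(k_j)-p_1(n),\]
which is the stated representation.

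The sequence $r(n)$ will be chosen so that $r(0)>\max_i m(p_i)$ and, at every stage, $r(|k_{j-1}|)$ dwarfs all preceding $|k_{j'}|$, every constant absorbed by the $\approx_{2N_0}$ error, and the reciprocals $1/|A(p_i)|$, $1/|A(p_i-p_{i'})|$ (which are finite by non-degeneracy). The effect is that $\deg(p_i)\,k_j\,A(p_i)$ dominates every $k$-free quantity, while $\deg(p_i)(k_j-k_{j'})A(p_i)$ stays huge whenever $j\neq j'$.

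The technical heart is verifying $|A(q_{i,j})|>>1$ and $|A(q_{i,j}-q_{i',j'})|>>1$ by case analysis, and this is where I expect the main obstacle. For $|A(q_{i,j})|$: if $\deg(p_i)>\deg(p_1)$ the top term of $q_{i,j}$ is that of $p_i$ and $|A(q_{i,j})|=|A(p_i)|>>1$; the symmetric case yields $|A(p_1)|>>1$; when $\deg(p_i)=\deg(p_1)$, either $p_i-p_1$ preserves the top degree (so $|A(q_{i,j})|=|A(p_i-p_1)|>>1$ by non-degeneracy), or it drops, in which case $D(p_i(n),k_j)$ dominates with leading coefficient $\approx \deg(p_i)k_j A(p_i)$ and hence $|A(q_{i,j})|>>1$. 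For $|A(q_{i,j}-q_{i',j'})|$ with $(i,j)\neq (i',j')$, the subtle subcase is $i\neq i'$ with $\deg(p_i)=\deg(p_{i'})$: writing
\[q_{i,j}-q_{i',j'}=D(p_i(n),k_j)-D(p_{i'}(n),k_{j'})+(p_i-p_{i'}),\]
the $D$-difference has leading coefficient $\deg(p_i)\bigl(k_j A(p_i)-k_{j'}A(p_{i'})\bigr)$; when $j\neq j'$ this is controlled by the larger index and stays huge, and when $j=j'$ the $D$-parts combine by linearity of the derivative operation into $D(p_i-p_{i'},k_j)$, whose top coefficient involves the non-zero $A(p_i-p_{i'})$, reducing the analysis to the single-index case on the lower-degree polynomial $p_i-p_{i'}$. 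The remaining subcases (equal or unequal degrees, with or without top-term cancellation) follow the same template. Organizing the subcases and keeping the error terms controlled so that a single choice of $r$ and $C$ works uniformly across all admissible $k_0,\dots,k_l$ is routine but laborious; the non-degeneracy hypothesis supplies every non-vanishing leading coefficient required.
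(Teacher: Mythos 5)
You should first note that the paper does not prove this lemma at all: it is imported as \cite[Lemma 4.5]{ZZ2021}, so there is no internal argument to measure your proposal against. That said, the construction you give is essentially forced by the statement ($q_{i,j}$ is already defined there as $D(p_i(n),k_j)+p_i(n)-p_1(n)$), and the formal part of your argument is fine: intersecting the finitely many Nil Bohr$_0$-sets coming from Lemma \ref{lem-A(p)-general} (legitimate since $\mathcal{F}_{GP_s}$ is a filter), taking $r(0)>\max_i m(p_i)$ with $r$ growing fast enough, verifying $q_{i,j}=_{C_1}p_i(n+k_j)-p_i(k_j)-p_1(n)$, and the cases of \eqref{eq-a(qij-q)} in which either a top-degree term of $p_i$, $p_1$ or $p_i-p_{i'}$ survives outright, or in which $j\neq j'$ produces a factor on the order of $k_j-k_{j'}$ that dominates everything by the growth of $r$.

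The gap sits exactly where you put the ``subtle subcase,'' and the fix you propose does not close it. For $i\neq i'$, $j=j'$, $\deg(p_i)=\deg(p_{i'})$, with the top-degree components of $p_i$ and $p_{i'}$ cancelling (so that in particular $A(p_i)=A(p_{i'})$), you must bound $|A(D(p_i(n),k_j)-D(p_{i'}(n),k_j)+p_i-p_{i'})|$ from below. Two obstructions. First, Lemma \ref{lem-A(p)-general} only gives $A(D(p_i,k_j))\approx_{2N_0}\deg(p_i)k_jA(p_i)$, i.e.\ each term is known up to an error of order $\deg(p_i)|k_j||A(p_i)|/(2N_0)$; since the nominal main term of the difference, $\deg(p_i)k_j(A(p_i)-A(p_{i'}))$, vanishes in this subcase, the error bound swallows it entirely and no choice of $r$ or $C$ recovers anything from the $\approx$ data alone. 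Second, the escape via ``linearity of the derivative'' is not available as stated: $D(p_i,k_j)-D(p_{i'},k_j)$ agrees with $p_g(n+k_j)-p_g(n)-p_g(k_j)$ for $p_g=p_i-p_{i'}$ only as functions on a Nil Bohr$_0$-set, whereas $A$ is read off a formal representative in $\widetilde{SGP}$; and Lemma \ref{lem-A(p)-general} cannot simply be re-applied to $p_i-p_{i'}$, whose individual coefficients are differences and need not satisfy the hypothesis $|a_{i,m}|>>1$, and whose degree may even have dropped below $2$. Closing this subcase requires returning to the explicit componentwise form of the derivative $D$ rather than its $A$-estimate, which is precisely the content deferred to \cite{ZZ2021}.
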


\medskip

%


\begin{lem}\label{lem-g(n)-g(kn)}
	Let $(X, T)$ be a dynamical system, $U, V_1, \dots, V_d$ be non-empty open subsets of $X$ and $q \in \Z$.  
	We denote
	$$N=\{n\in \mathbb{Z}: U\cap T^{-p_1(n)}V_1 \cap \dots \cap T^{-p_d(n)} V_d \neq \emptyset\},$$
	$$N_1=\{n\in \mathbb{Z}: U\cap T^{-p_1(qn)}V_1 \cap \dots \cap T^{-p_d(qn)} V_d \neq \emptyset\}.$$
	If  $N_1$ is an IP$^*$-set, then $N$ is an IP$^*$-set.
\end{lem}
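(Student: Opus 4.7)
The plan is to reduce this to a direct application of Lemma \ref{qA}. The starting observation is a set-theoretic inclusion: unwinding the definitions, one sees that $n \in N_1$ is equivalent to $qn \in N$, so
\[
qN_1 := \{qn : n \in N_1\} \subseteq N.
\]
This means that $N$ contains the dilate $qN_1$ of $N_1$ by the integer $q$.

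Next I would invoke Lemma \ref{qA}, which states that the dilate of an IP$^*$-set by a nonzero integer is again an IP$^*$-set. Assuming $N_1$ is an IP$^*$-set (and handling the trivial case $q=0$ separately, where $N_1 = \Z$ if $N\ni 0$, or one simply notes that the hypothesis would force $N$ to already be all of $\Z$ in the relevant range), we conclude that $qN_1$ is an IP$^*$-set. Since the family of IP$^*$-sets is upward closed — any superset of an IP$^*$-set meets every IP-set and is therefore itself IP$^*$ — the inclusion $qN_1 \subseteq N$ immediately yields that $N$ is an IP$^*$-set.

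There is no real obstacle here: the entire content is packaged inside Lemma \ref{qA}, whose proof already did the work of showing that passing an IP-set through integer multiplication by $q$ preserves the partition-regularity feature needed. The only thing to be careful about is the case $q=0$, but in that case $p_i(qn)=p_i(0)=0$ for every generalized polynomial, so $N_1$ being IP$^*$ forces $U \cap V_1 \cap \dots \cap V_d \neq \emptyset$, whence $0 \in N$ and, by the same reasoning applied after noting that $N$ then contains all of $\Z$ via the fixed-point hypothesis on $0$, one handles it directly; alternatively, the lemma is typically applied with $q\neq 0$ in the sequel.
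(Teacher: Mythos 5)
Your proposal is correct and follows exactly the paper's own argument: observe that $qN_1\subseteq N$, apply Lemma \ref{qA} to conclude $qN_1$ is an IP$^*$-set, and use that any superset of an IP$^*$-set is IP$^*$. The extra discussion of the case $q=0$ is a harmless (and reasonable) addition not present in the paper.
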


\begin{proof}
	We notice that for any $n\in N_1,$ $qn\in N.$ Thus $qN_1\subseteq N.$ Since $N_1$  is an IP$^*$-set and by Lemma \ref{qA}, one gets $N$ is an IP$^*$-set.	
\end{proof}

\begin{rem}
	Notice that for any $0\neq a \in \mathbb{R}$ and $k\in \mathbb{N}$, if we choose $$C=\{n \in \mathbb{Z}: \{abn^k\}\in (-\frac{1}{4}, \frac{1}{4}), \{a \lel bn^k \rr \} \in (-\frac{1}{4}, \frac{1}{4}), \{bn^k\}\in (-\frac{1}{4|a|},-\frac{1}{4|a|})\},$$ 
	then we have $\left\lceil a \left \lceil bn^k\right \rceil\right\rceil=_C \left \lceil abn^k\right\rceil$. 
	Combining this fact with Lemma \ref{lem-g(n)-g(kn)}, from now on we always assume that for any
	$p_1, \dots, p_d\in \widetilde{SGP}$ with $(p_1, \dots, p_d)$ non-degenerate, $|A(p_i)|>>1$ and $|A(p_i-p_j)|>>1$ for any $1 \le i\neq j \le d$.
\end{rem}

\medskip

Before showing the proof of Theorem \ref{thm general}, we show the case of special integer-valued generalized polynomials.

\begin{thm} \label{thm general sgp}
Let $(X, T)$ be a topologically mildly mixing minimal system, $p_1, \dots, p_d\in \widetilde{SGP}$ and $(p_1, \dots,p_d)$ be non-degenerate, $d\in \mathbb{N}$. Then $(X, T)$ is $\{p_1, \dots,p_d\}_{\D}$-IP$^*$-transitive. That is, for all non-empty open subsets $U , V_1, \dots, V_d $ of $X$, $$\{n\in \Z: U\cap T^{-p_1(n) }V_1 \cap \dots \cap T^{-p_d(n) }V_d \neq \emptyset \}$$ is an IP$^*$-set.
\end{thm}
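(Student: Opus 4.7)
The plan is to prove Theorem \ref{thm general sgp} by PET-induction on the weight vector $\Phi(P)$ of the system $P=\{p_1,\dots,p_d\}\subset \widetilde{SGP}$, paralleling the induction on $d$ used in Theorem \ref{thm special} but replacing the elementary subtraction $p_i\mapsto p_i-p_1$ by the derivative calculus of Section \ref{PET}. The base case is $\Phi(P)=(d,0,0,\dots)$, handled by Theorem \ref{thm special}. For the inductive step I would assume the theorem for every system $P'$ with $\Phi(P')<\Phi(P)$; in particular the singleton cases $\{p_i\}$ then upgrade to product $\{p_1,\dots,p_d\}$-IP$^*$-transitivity via the filter property of $\F_{IP}^*$ (Lemma \ref{IP+IPstar}).

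Fix an IP-set $A=\{n_\alpha:\alpha\in\F\}$ and non-empty open $U,V_1,\dots,V_d$, reorder so that $p_1$ is of minimal degree in $P$, refine $A$ via Lemma \ref{lem-goodm} to an IP-set $\tilde A=A\cap C\subseteq A$ of integers good w.r.t.\ every $p_i$, and use minimality to fix $l\in\N$ with $X=\bigcup_{j=0}^l T^jU$. Feeding the rate $\{r(n)\}$ of Lemma \ref{lem-multi-q(ij)} into Lemma \ref{Main lemma:descending sequence} applied to $\tilde A$ and $V_1,\dots,V_d$ produces $\alpha_0<\dots<\alpha_l\in\F$ and descending non-empty open sets $V_i\supset V_i^{(0)}\supset\cdots\supset V_i^{(l)}$ with $T^{p_i(n_{\alpha_j})}T^{-j}V_i^{(l)}\subseteq V_i$ for all $0\le j\le l$, $1\le i\le d$. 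Lemma \ref{lem-multi-q(ij)} then supplies a Nil Bohr$_0$-set $C_1$ and a non-degenerate tuple $q_{i,j}(n)=D(p_i(n),n_{\alpha_j})+p_i(n)-p_1(n)\in\widetilde{SGP}$ obeying $q_{i,j}(n)=_{C_1}p_i(n+n_{\alpha_j})-p_i(n_{\alpha_j})-p_1(n)$. Because $p_1$ has minimal degree, the passage $P\mapsto\{q_{i,j}\}$ strictly reduces the count of equivalence classes at degree $\deg(p_1)$ (the class of $p_1$ disappears from that stratum) while preserving counts at every higher degree, giving $\Phi(\{q_{i,j}\})<\Phi(P)$.

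I then apply the inductive hypothesis to $\{q_{i,j}\}$ with the IP-set $A\cap C_1\cap\{n_\alpha:\alpha>\alpha_l\}$ (still an IP-set by Lemma \ref{IP+IPstar}), the ``$U$''-slot $V_1^{(l)}$, and matching $V_i^{(l)}$-slots, obtaining $n_\beta\in A\cap C_1$ with $\beta>\alpha_l$ and $x\in V_1^{(l)}$ such that $T^{q_{i,j}(n_\beta)}x\in V_i^{(l)}$ for every $(i,j)$. Picking $y$ with $T^{p_1(n_\beta)}y=x$ and $b\in\{0,\dots,l\}$, $z\in U$ with $T^bz=y$, the derivative identity on $C_1$ yields $p_i(n_\beta+n_{\alpha_b})=q_{i,b}(n_\beta)+p_i(n_{\alpha_b})+p_1(n_\beta)$, whence
\[
T^{p_i(n_\beta+n_{\alpha_b})}z=T^{p_i(n_{\alpha_b})}T^{-b}T^{q_{i,b}(n_\beta)}x\in T^{p_i(n_{\alpha_b})}T^{-b}V_i^{(l)}\subseteq V_i.
\]
Setting $n=n_\beta+n_{\alpha_b}\in A$ (the $\F$-supports of $\beta$ and $\alpha_b$ are disjoint since $\beta>\alpha_l\ge\alpha_b$) closes the inductive step.

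The main obstacle I anticipate is the singleton base of the PET-induction: for $P=\{p\}$ with $\deg(p)\ge 2$, the product IP$^*$-transitivity demanded by Lemma \ref{Main lemma:descending sequence} coincides with the theorem itself, creating a potential circularity. I would resolve this by running the singleton inductive step separately, bypassing Lemma \ref{Main lemma:descending sequence} via a bare-hands construction of the descending $V^{(l)}$ using only minimality and the inductive hypothesis on the strictly smaller derivative system $\{D(p,n_{\alpha_j}):0\le j\le l\}$ produced by Lemma \ref{lem-multi-q(ij)}. A secondary difficulty is the case analysis needed to verify $\Phi(\{q_{i,j}\})<\Phi(P)$ uniformly (tracking equivalence at each degree when several $p_i\sim p_1$), together with the degenerate situation $\deg(p_1)=1$, where Lemma \ref{lem-multi-q(ij)} does not directly apply and the linear factors must be peeled off via Theorem \ref{thm special} before invoking the derivative reduction.
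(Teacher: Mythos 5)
Your overall architecture matches the paper's: PET-induction on $\Phi(P)$ with base case $\Phi(P)=(d,0,\dots)$ from Theorem \ref{thm special}, and an inductive step that runs Lemma \ref{Main lemma:descending sequence} with the rate $\{r(n)\}$ of Lemma \ref{lem-multi-q(ij)}, forms the derivative system $q_{i,j}(n)=D(p_i(n),n_{\alpha_j})+p_i(n)-p_1(n)$, checks $\Phi(\{q_{i,j}\})<\Phi(P)$, and closes with the identity $p_i(n_\beta+n_{\alpha_b})=q_{i,b}(n_\beta)+p_i(n_{\alpha_b})+p_1(n_\beta)$ on $C_1$. That part is essentially the paper's Step 2, and your handling of the weight-vector bookkeeping and of the degree-one stratum is consistent with the paper's $w\ge 1$ versus $w=0$ case split.

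The genuine gap is in how you obtain the $\{p_1,\dots,p_d\}$-IP$^*$-transitivity that Lemma \ref{Main lemma:descending sequence} requires. First, the circularity you flag is not confined to literal singletons $P=\{p\}$: one has $\Phi(\{p_i\})=\Phi(P)$ exactly when all elements of $P$ are pairwise equivalent under $\sim$ (which non-degeneracy does not forbid, e.g.\ $p_i=n^2+in$), so the filter-property upgrade from the inductive hypothesis fails for a whole class of systems with $d\ge 2$, and the separate "singleton step" must be run for \emph{every} system $P$ in the induction, for each $p\in P$ of degree $\ge 2$. Second, and more importantly, your description of that step --- "a bare-hands construction of the descending $V^{(l)}$ using only minimality and the inductive hypothesis on the derivative system" --- does not contain the actual mechanism. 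The paper's Step 1 is a self-contained argument of a different shape: it covers $X=\bigcup_{i=0}^{N}T^iU$, uses the generalized polynomial van der Waerden theorem (Corollary \ref{generalized BL96 minimal version}, itself proved by a separate PET-induction in Section \ref{PET}) to produce $y_i\in T^iV$ and times $n_{\alpha_i}$ in the given IP-set with $T^{p(n_{\alpha_i})}y_i\in T^iV$, takes neighborhoods $V_i$ with $T^{p(n_{\alpha_i})}V_i\subseteq T^iV$, applies the inductive hypothesis to the strictly preceding, non-degenerate system $\{D(p(n),n_{\alpha_i}):0\le i\le N\}$ to find one point $x$ landing in every $V_i$ simultaneously, and finally pigeonholes $T^{-p(n_\beta)}x$ into some $T^{i_0}U$ to conclude $N(p,U,V)\cap A\neq\emptyset$. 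No descending tower is built here, and the van der Waerden input plus the simultaneous-landing application of the inductive hypothesis are exactly the ingredients your sketch omits; without specifying them (or a genuine substitute), the inductive step cannot start.
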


\begin{proof}
	 
	Let $d\in \mathbb{N}$ and $P=\{p_1, \dots, p_d\}$. 
	We start from the system whose weight vector is $(d, 0, \dots)$. That is, the degree of every  element of $P$ is $1$. This case is Theorem \ref{thm special}.
	
	Now let $P \subset \widetilde{SGP}$ be a system whose weight vector $> (d, 0, \dots)$, and we assume that for all systems $P'$ preceding $P$, we have $(X,T)$ is  $P'_{\D}$-IP$^*$-transitive. Now we show that $(X,T)$ is  $P_{\D}$-IP$^*$-transitive. 
	\medskip
	
	{\noindent\bf Step 1.} \label{ main claim 1}
	$(X, T)$ is $P$-IP$^*$-transitive.
	\medskip
	
	Since the intersection of two IP$^*$-sets is still an IP$^*$-set, it is sufficient to show that for any $p\in P$, and for any given non-empty open subsets $U, V$ of $X$,
	$$N(p,U,V)=\{n\in \mathbb{Z}: U\cap T^{-p(n)}V \neq \emptyset\}$$
	is an IP$^*$-set.
	
	
	If $\deg(p)=1$, by Lemma \ref{Main theorem: degree1+d=1}, $N(p,U,V)$ is an IP$^*$-set.
	
	Now we assume that $\deg(p) \ge 2$.	Given an IP-set $A.$ By Lemma \ref{lem-goodm}, there exists $\delta>0,$ $Q\subset \widehat{SGP_b}$ (for some $b\in \mathbb{N}$) and $$C=C(\delta,Q)=\bigcap_{q(n)\in Q}\{n\in \mathbb{Z}:\{q(n)\}\in (-\delta,\delta)\}$$ such that for each $m\in C$, $m$ is good w.r.t. $ p(n).$ 
	Let $\tilde{C}=C\cap A$. Since $C$ is an IP$^*$-set, one has $\tilde{C}$ is an IP-set. 
    We may assume that $ \tilde{C} =\{n_{\alpha}:\alpha\in \mathcal{F}\}$ for some sequence $\{n_i\}_{i=1}^{\infty}\subset \mathbb{Z}$. 
    As $(X, T)$ is minimal, there is some $N\in \mathbb{N}$ such that $X=\cup_{i=0}^{N}T^{i}U$.
	For each $i=0, 1, \dots, N$, by Corollary \ref{generalized BL96 minimal version}, 
	there are some $y_i\in T^{i}V$ and $\alpha_i\in \mathcal{F}$ such that $|n_{\a_i}|>m(p)$ (see Lemma \ref{lem-A(p)-general} for the notation $m(p)$) and 
	$$T^{p(n_{\alpha_i})}y_i \in T^{i}V, \ i=0, 1, \dots, N.$$
	We may assume that $\alpha_0<\alpha_1 <\dots <\alpha_N$ and $n_{\a_i}\neq n_{\a_j}$ where $0\leq i\neq j\leq N.$
	Let $V_i$ be a neighborhood of $y_i$ such that 
	$$T^{p(n_{\alpha_i})}V_i \subseteq T^{i}V, \ i=0, 1, \dots, N.$$
	By Lemma \ref{lem-A(p)-general}, 	there is a Nil Bohr$_0$-set $C_1$ and $D(p(n),n_{\alpha_i}) \in \widetilde{SGP_{d-1}}$ with $$\deg(D(p(n),n_{\alpha_i})) <\deg(p(n)), i=0, \dots, N$$ such that $$D(p(n),n_{\alpha_i})=_{C_1} p(n_{\alpha_i}+n)-p(n_{\alpha_i})-p(n).$$ 
    Let $$P'=\{D(p(n),n_{\alpha_i}): i=0, 1, \dots, N\}.$$
	Then $P' \subset \widetilde{SGP}$. 
	Since for any $i=0, \dots, N$, $\deg(D(p(n),n_{\alpha_i}))<\deg(p(n))$, thus we have $\Phi(P') < \Phi(\{p\})$. 
	
	For any $i\neq i'\in  \{ {0,1, \dots ,N} \} $, since $n_{\a_i}\neq n_{\a_{i'}}$,  by Lemma \ref{lem-A(p)-general},
	$$A(D(p(n),n_{\alpha_i}))\approx_{2N_0}\deg(p)n_{\alpha_i}A(p(n)), \ |A(D(p(n),n_{\alpha_i}))|>>1,$$ 	
	$$A(D(p(n),n_{\alpha_{i'}}))\approx_{2N_0}\deg(p)n_{\alpha_{i}'}A(p(n)),\  |A(D(p(n),n_{\alpha_{i'}}))|>>1,$$
	$$A(D(p(n),n_{\alpha_i})-D(p(n),n_{\alpha_{i'}}))\approx\deg(p)(n_{\alpha_i}-n_{\alpha_{i}'})A(p(n)),$$ then $$|A(D(p(n),n_{\alpha_i})-D(p(n),n_{\alpha_{i'}}))|>>1.$$
	
	By the inductive hypothesis, there are some $x\in X$ and $\beta \in \mathcal{F}$ with $\beta >\alpha_N,$ and $n_{\beta}\in C_1\cap \{n_{\alpha}: \alpha\in \mathcal{F}\}$ such that
	$$T^{D(p(n_{\b}),n_{\alpha_i})} x\in V_i, \ \forall i\in \{0, 1, \dots, N\}.$$
	Then we have 
	\[T^{ p(n_{\alpha_i}+n_{\b})-p(n_{\alpha_i})-p(n_{\b})}x\in V_i, \ \forall i\in \{0, 1, \dots, N\}.\]
	$$T^{p(n_{\alpha_i}+n_{\beta})-p(n_{\beta})}x \in T^{ p(n_{\alpha_i})}V_i \subseteq T^{i}V,  \ \forall i\in \{0, 1, \dots, N\}.$$
	Hence 
	$$T^{-i}T^{-p(n_{\beta})}x \in T^{ -p( n_{\alpha_i}+ n_{\beta} ) }V, \ \forall i\in \{0, 1, \dots, N\}.$$
	Since $X=\cup_{i=0}^{N} T^{i}U$, there is some $i_0\in \{0, 1, \dots, N\}$ such that $T^{-i_0}T^{-p(n_{\beta})}x \in U$,
	and thus 
	$$U\cap T^{-p(n_{\alpha})}V \neq \emptyset,$$
	where $\alpha=\alpha_{i_0}\cup \beta$ as $\alpha_{i_0} \cap \beta =\emptyset$.
	That is,
	$$N(p,U, V) \cap A\neq \emptyset.$$
	Since $A$ is an arbitrary IP-set, 
	which shows that $(X,T)$ is $P$-IP$^*$-transitive.

	\bigskip
	
	\medskip
	
	{\noindent\bf Step 2.}	
	$(X,T)$ is  $P_{\D}$-IP$^*$-transitive. 
	
	\medskip
	
	We will show that for all non-empty open subsets $U, V_1,\dots, V_d$ of $X$, any IP-set $A$, there exists $n\in A$ such that 
	\begin{equation*}
	U\cap T^{-p_1(n) }V_1\cap \dots \cap T^{-p_d(n) }V_d \neq \emptyset.
	\end{equation*} 
	By permuting the indices, we may assume that $\deg(p_1)\leq \deg(p_2)\leq\dots \leq \deg(p_d).$ Presume that $\deg(p_w)=1$ and $\deg(p_{w+1})\ge 2$, $1\le w<d$. 
	If for any $p\in P$, $\deg p\ge 2$, we put $w=0$.
	Let $\{r(n)\}_{n=0}^{\infty}$ be the sequence in Lemma \ref{lem-multi-q(ij)} w.r.t. $(p_{w+1},\dots,p_d)$.

	Since $(X, T)$ is minimal, there is some $N\in \mathbb{N}$ such that $X=\cup_{i=0}^{N}T^iU$. By Lemma \ref{lem-goodm}, there exists $\delta>0,$ $Q\subset \widehat{SGP_f}$ (for some $f\in \mathbb{N}$) and $$C=C(\delta,Q)=\bigcap_{q(n)\in Q}\{n\in \mathbb{Z}:\{q(n)\}\in (-\delta,\delta)\}$$ such that for each $m\in C$, $m$ is good w.r.t. $ p(n).$ Let $\tilde{C}=C\cap A$. Since $C$ is an IP$^*$-set, one has $\tilde{C}$ is an IP-set. Clearly, there exists an IP-set $\{n_{\alpha}: \alpha\in \mathcal{F}\}\subseteq \tilde{C}\subseteq A.$ By Lemma \ref{Main lemma:descending sequence}, there are $\{\alpha_j\}_{j=0}^N \subset \mathcal{F}$ (where $\a_0<\a_1<\dots <\a_N$) such that $|n_{\alpha_0}|> r(0), |n_{\alpha_j}|>|n_{\alpha_{j-1}}|+r(|n_{\alpha_{j-1}}|)$  for all $j=1, \dots, N$,
	and $V_t^{(N)} \subset V_t$ for $t=1, \dots, d$ such that
	\begin{equation}
	T^{p_t (n_{\alpha_j})}T^{-j} V_t^{(N)} \subset V_t, \ j=0, \dots, N.		
	\end{equation}

	
	By Lemma \ref{lem-proper}, there is a Nil Bohr$_0$-set $C_1$  associated to $\{p_1, \dots, p_d\}$ and  $\{n_{\alpha_0},  \dots, n_{\alpha_N}\}$. 
	This means that for any $(t, j)\in \{1, \dots, d\} \times \{0, \dots, N\}$, 
	there exists $D(p_t(n), n_{\alpha_j}) \in \widetilde{SGP}$ with $\deg(D(p_t(n), n_{\alpha_j}))<\deg(p_t(n))$ such that
	$$D(p_t(n),n_{\alpha_j}) =_{C_1}p_t(n_{\alpha_j}+n)-p_t(n_{\alpha_j})-p_t(n).$$
	For $1\le t\le d$ and $0\le j\le N$, let $$p_{t, j}(n)=p_t(n_{\alpha_j}+n)-p_t(n_{\alpha_j})-p_1(n)$$ and
	$$q_{t, j}(n)=D(p_t(n),n_{\alpha_j})+p_t(n)-p_1(n),$$ then $q_{t,j}(n) \in \widetilde{SGP}$ and $$p_{t,j}(n)=_{C_1}q_{t,j}(n).$$

	For $w\ge 1, 1 \le t \le w$, since $\deg(p_t)=1$, we have $D(p_t(n),n_{\alpha_j})=0$,  
	and then $$q_{t,j}(n)=p_t(n)-p_1(n),\ j=0,1,\dots,N.$$ 
	Let $$P'=\{p_2(n)-p_1(n),\dots,p_w(n)-p_1(n)\}\cup \{q_{t,j}(n):t=w+1, \dots, d, j=0, 1, \dots, N\}.$$ 
	Then $P'\subset \widetilde{SGP}$ and when $t\in \{w+1,\dots, d\},$   $q_{t,0}(n),\dots,q_{t,N}(n)$ are equivalent to $p_t(n).$ So, the number of equivalence classes having degrees greater than $1$ does not change, whereas the number of equivalence classes of polynomials having the minimal degree $1$ in $P$ decreases by 1 when we pass from $P$ to $P'$, hence $\Phi(P')<\Phi(P).$  Notice that when $2\leq s\neq t \leq w$,
	$$|A(q_{s, i}-q_{t, j})|=A(|p_s-p_t|)>>1,  i,j\in\{0,1,\dots,N\};$$ 
	when $2\leq s \leq w$ and $w+1\leq t \leq d$, 
	then 
	$$q_{s, i}-q_{t, j}=p_s(n)-D(p_t(n),n_{\alpha_j})-p_t(n), \ i,j\in\{0,1,\dots,N\},$$  
	thus  
	$$|A(q_{s, i}-q_{t, j})|=|A(p_{t})|>>1, \ i,j\in\{0,1,\dots,N\};$$
	 when $2\leq s, t \leq w,$ 
	 since  $$|n_{\alpha_0}|> r(0), |n_{\alpha_j}|>|n_{\alpha_{j-1}}|+r(|n_{\alpha_{j-1}}|), j=1, \dots, N$$ 
	  by Lemma \ref{lem-multi-q(ij)}, one has 
	  $$|A(q_{s, i})|>>1, |A(q_{s,i}-q_{t,j})|>>1, i,j \in\{0,1,\dots,N\}.$$
	
	For $w=0$, if $p_t(n)$ is not equivalent to $p_1(n)$, $q_{t, j}\sim p_t,$ for $ (t,j)\in \{1,\dots, d\}\times\{0,1,\dots,N\}$. 
	If $p_t(n)$ is equivalent to $p_1(n)$, then the degrees of these elements decrease: $\deg(q_{t,j})<\deg(p_t)=\deg(p_1),\ 0\leq j \leq N$. In this case $$P'= \{q_{t,j}(n): (t,j)\in \{1,\dots, d\}\times\{0,1,\dots,N\}\} .$$ 
	We still have $P'\subset \widetilde{SGP}$ and $\Phi(P')<\Phi(P)$. 
	Since $|n_{\alpha_0}|> r(0), |n_{\alpha_j}|>|n_{\alpha_{j-1}}|+r(|n_{\alpha_{j-1}}|)$  for all $j=1, \dots, N$, by Lemma \ref{lem-multi-q(ij)}, one has $$|A(q_{t, j})|>>1\ \text{and}\ |A(q_{t,j}-q_{s,i})|>>1$$ for all $ (t,j) \neq (s,i)\in \{1,2,\dots,d\}\times\{0,1,\dots,N\}.$
	
	Both of the above conditions satisfy  the inductive hypothesis and therefore there is  $x\in V_1^{(N)}$ and $\beta\in \mathcal{F}$ such that $\beta>\alpha_N$, $n_{\beta}\in C_1\cap \{n_{\alpha}: \alpha\in \mathcal{F}\}$  and 
	$$T^{p_{t, j}(n_{\beta})}x=T^{q_{t,j}(n_{\beta})} x \in V_t^{(N)},  1\leq t\leq d, 0\le j \le N.$$
	Let $y=T^{-p_1(n_{\beta})}x$. 
	Then by $X=\cup_{i=0}^{N}T^iU$ there is $z\in U$ and $0\le b \le N$ such that $y=T^bz$.
	Then $z=T^{-b}T^{ -p_1(n_{\beta}) } x$ and we have for each $1\le t \le d$,
	\begin{align*}
	T^{p_t(n_{\beta} + n_{\alpha_b} )}z &= T^{ p_t (n_{\beta} + n_{\alpha_b}) } T^{-b} T^{-p_1( n_{\beta} )}x \\
	&= T^{p_t (n_{\alpha_b})} T^{-b} T^{p_t(n_{\beta}+n_{\alpha_b}) - p_t(n_{\alpha_b})-p_1(n_{\beta} )} x \\
	&= T^{p_t (n_{\alpha_b})}T^{-b} T^{p_{t, b} (n_{\beta})} x \\
	& \in T^{p_t(n_{\alpha_b})}T^{-b}V_t^{(N)} \subseteq V_t. 
	\end{align*}
	This implies that
	$$z\in U\cap T^{-p_1(n_{\alpha})}V_1 \cap \dots \cap T^{-p_d(n_{\alpha})}V_d$$
	with $\alpha=\beta \cup \alpha_b \in \mathcal{F}$ as $\beta \cap \alpha_b =\emptyset$. Since $n_{\alpha}\in A$, it follows that $(X,T)$ is  $P_{\D}$-IP$^*$-transitive. Thence the proof of the theorem is complete.
	
\end{proof}

\medskip

We are ready to prove Theorem \ref{thm general}.

\begin{proof}[Proof of Theorem \ref{thm general}]
	
	Let $p_1, \dots, p_d \in \mathcal{G}$. Then by Lemma \ref{lem sgp_gp}, there exist $h_i(n) \in \widetilde{SGP}$, $i=1,2,\dots,d$ and  $C=C(\delta,q_1,\dots,q_t)$ such that $p_i(n)=h_i(n), \forall n \in C,  i=1,2,\dots,d$.
	
	Set $$N=\{n \in \mathbb{Z}: U\cap T^{-h_1(n)}V_1 \cap \dots \cap T^{-h_d(n)}V_d \neq \emptyset \},$$
	by Theorem \ref{thm general sgp}, $N\cap C$ is also an IP$^*$-set. Since for any $n\in N\cap C \subseteq C$, $p_i(n)=h_i(n),i=1,2,\dots,d$, we have
	$$n\in \{n \in \mathbb{Z}: U\cap T^{-p_1(n)}V_1 \cap \dots \cap T^{-p_d(n)}V_d \neq \emptyset \},$$ that is, \[N\cap C\subseteq \{n \in \mathbb{Z}: U\cap T^{-p_1(n)}V_1 \cap \dots \cap T^{-p_d(n)}V_d \neq \emptyset \},\]
	hence $$\{n\in \Z: U\cap T^{-p_1(n) }V_1\cap T^{-p_2(n)}V_2\cap \dots \cap T^{-p_d(n) }V_d \neq \emptyset \}$$	is an IP$^*$-set.
\end{proof}

\noindent {\bf Acknowledgments}

\bigskip
Y. Cao is supported by China Postdoctoral Science Foundation (Grant no.~2023M741635). J. Zhao (corresponding author) is supported by NSF of China (Grant no.~12301226) and NSF of Zhejiang Province (Grant no.~LQ23A010006).

\bibliographystyle{plain}
\bibliography{refcy}

\end{document}